\title{Classification over a predicate - the general case Part I - structure theory}
\author{Saharon Shelah\footnote{Publication no. 322 on Shelah's list of publications. Shelah thanks the Israel Science Foundation
   (Grants 1053/11, 1838/19), and the
   European Research Council (Grant 338821)
   for partial support of this research.} and  Alexander Usvyatsov\footnote{Research partially supported by Marie Sklodowska Curie CIG 321915 "ModStabBan". Usvyatsov thanks the Austrian Science Foundation (FWF), projects P33895 and  P33420, and the Portuguese Science Foundation (FCT), Development grant IF/01726/2012, for their partial support at different stages of this research.
    }}
\newtheorem{theorem}{Theorem}[section]
\newtheorem{definition}[theorem]{Definition}
\newtheorem{example}[theorem]{Example}
\newtheorem{lem}[theorem]{Lemma}
\newtheorem{obs}[theorem]{Observation}
\newtheorem{co}[theorem]{Corollary}
\newtheorem{hyp}[theorem]{Hypothesis}
\newtheorem{remark}[theorem]{Remark}
\newtheorem{pr}[theorem]{Proposition}
\newtheorem{no}[theorem]{Notation}
\newtheorem{conv}[theorem]{Convention}
\newtheorem{qst}[theorem]{Question}
\newtheorem{ft}[theorem]{Fact}
\renewenvironment{proof}{\noindent {\em Proof:}}{\hspace*{1cm}
        \hspace*{\fill}$\rule{1.2ex}{1.4ex}$\medskip} 
\newenvironment{re}{\begin{remark}\rm}{\end{remark}}
\newenvironment{ex}{\begin{example}\rm}{\end{example}} 
\newenvironment{de}{\begin{definition}\rm}{\end{definition}}
\newenvironment{fact}{\begin{ft}\rm}{\end{ft}}
\newcommand{\red}[1]{{\color{black}{#1}}}
\newcommand{\green}[1]{{\color{black}{#1}}}
\date{}
\begin{document}


\maketitle

\abstract{We begin a systematic development of structure theory for a first order theory, which is stable over a monadic predicate. We show that stability over a predicate implies quantifier free definability of types over stable sets, introduce an independence notion and explore its properties, prove stable amalgamation results, and show that every type over a model, orthogonal to the predicate, is generically stable.}

\section{Introduction}

The classical Classification Theory \cite{Sh:c} deals with a first order theory
$T$, how complicated its models can be, and to  which extent they can be
characterized by cardinal invariants.

For algebraically closed fields, or for divisible abelian groups, there is
such a structure theory. In general, there is a division into 
theories that have a structure 
and such for which we have a
non-structure theorem.

Consider now vector spaces over a field. Do we have a structure theory? It
depends on how you ask the question: We have a structure theory for the
vector space over the field, but  not necessarily for the field.

\underline{Problem}: Classify pairs $(T,P)$ where $T$ is a first order
theory and $P$ is a monadic predicate. We want to know how much
$M\models T$ is determined by $M|{P^M}$. So in case $T$ is the two-sorted theory of a vector space over a field, and $P$ is the predicate for the field, we ask how much we can know about a vector space over a field $F$ once $F$ is flixed (obviously, we know quite a bit, especially if the cardinality of the vector space is fixed). 

Although at the first glance the problem above may appear close to classical (first order) model theory, this context actually exhibits behavior which is more similar to that of some non-elementary classes (classes of models of a sentence in an infinitary logic, or abstract elementary classes). See e.g. Hart and Shelah \cite{HaSh323}. An intuitive  ``explanation'' for this is that fixing $P$ is similar to insisting on omitting a certain type (the type enlarging $P$), which immediately puts one in a non-elementary context. 

Much work has already been done on classification theory over $P$. Relative categoricity for particular theories was investigated by e.g. Hodges et al \cite{Hod-cat1,Hod-cat2,Hod-cat3}; countable categoricity over $P$ was studied by Pillay \cite{Pil-cat}. Pillay and the first author laid  the foundations for the study of stability and related properties in this context in \cite{PiSh130}. Here we are going to continue their investigation and build upon their results.

In \cite{Sh234} the first author proved an analogue of Morley's Theorem over $P$ under the assumption of ``no two-cardinal models'', which means that for all $M\models T$, $|P^M| = |M|$. However, this assumption is very  strong. Even for the the case of uncountable categoricity (a notion that  we discuss below in Definition \ref{dfn:cat}), it would be nice to be able to prove an analogue of Morley's Theorem without making this assumotion it \emph{a priori}.  Furthermore, the ``no two cardinal model'' assumption does not hold in many natural examples that 
should  ``morally'' be quite tame. For example, the general theory developed in \cite{Sh234} does not even cover  the example of a vector space over a field, already mentioned briefly above  (Example \ref{ex:vs}). Apparently, from the point of view of model theory, this example is not that easy (but is quite instructive). 


A much more interesting example that we hope will eventually be included in our treatment is the theory of so-called ``Zilber's  field'', more precisely, the theory of exponentially closed fields of characteristic 0 \cite{Zil-field2,KiZil, Hen-thesis}. In our context,  one considers the theory of an exponentially closed field $(F,e^z)$ over $P$, which is the kernel of the exponential function $e^z$. For example, it follows from the results in \cite{KiZil} that this theory is stable over $P$. 

Another natural example to consider in this setting is the theory of algebraically closed field of characteristic 0 with a generic automorphism $\sigma$, where $P$ is the fixed field of $\sigma$. This theory has  been studied by Chatzidakis in a recent preprint  \cite{Chatzidakis2020RemarksAT}. This example is different than the ones mentioned above, since in this case $P$ is model theoretically ``tame'' (it is pseudo-finite), and the full theory $T$ is also very well understood (it is simple, has QE, etc). Nevertheless, considering it in this new framework, offers further insight. 

Some results in \cite{Chatzidakis2020RemarksAT} can be derived directly from the more general analysis obtained in our paper. For example, Chatzidakis proves (Theorem 3.14 in \cite{Chatzidakis2020RemarksAT}) the existence of a $\ka$-prime  $\ka$-atomic model over an algebraically closed difference field of characteristic $0$ with a $\ka$-saturated and pseudo-finite $P$-part (for $\ka$ uncountable or $\aleph_\eps$). We prove an analogous result in our general context (see Theorem \ref{th:primary} and  Corollary \ref{co:primary} thereafter) for the case that the $P$-part is saturated (of cardinality $\ka$). So in particular, the case $\ka = \aleph_\eps$ is not covered. A more nuanced analysis, which would imply stronger results closer to Theorem 3.14 in \cite{Chatzidakis2020RemarksAT} goes beyond the scope of this article. This will be investigated in a future work. 

\medskip

Let us make the above discussion a bit more precise. 

One rough measure of complexity of a theory is (as usual) the number of its non-isomorphic models in different cardinalities:

\begin{de}
\begin{itemize}
\item[(i)] $I(\lambda ,N)=$ cardinality of $\{M/_{\cong _N} :
M\models T, M|_P=N, |M|=\lambda\}$, where $\cong _N$ means
isomorphic over $N$ and  $M/_{\cong _N}$ denotes the isomorphism class.
\item[(ii)] $I(\lambda ,\mu)= \sup \{I(\lambda, N) :|N|=\mu\}$.
\item[(iii)] $I(\lambda)=:I(\lambda,\lambda )$.
\end{itemize}
\end{de}

\begin{ex}\label{ex:vs} (Back to vector spaces over a field). Let $T$ be the theory of two-sorted models $(V,F)$ where $V$
is a vector space over the field $F$, and let $P$ be a predicate for the
field $F$. If the cardinality of $F$ is $\aleph_\alpha$, then
$I(\aleph _\alpha, F)=|\omega +\alpha|$ and for $\beta >\alpha$, $I(\aleph
_\beta,{\color{black}F})=1$.
\end{ex}


So we want to divide the pairs $(T,P)$ according to how much freedom we
have to determine $M$ knowing $M|P$.

\begin{de}\label{dfn:cat}
\begin{itemize}
\item[(i)] $(T,P)$ is \emph{categorical in $(\lambda_1,\lambda_2)$} when the following
holds: If $M_1|P=M_2|P, M_1$ and $M_2$ are models of
$T$ and $|M_l|=\lambda_1$, $| P^{M_l}|=\lambda_2$ for $l=1,2$, then $M_1$ and
$M_2$ are isomorphic over $P^{M_l}$.
\item[(ii)] We write \emph{categorical in $\lambda$} instead of $(\lam,\lam)$.
\item[(iii)] We say \emph{totally categorical} if (ii) holds for all $\lambda$.
\end{itemize}
\end{de}

For our purpose, categorical pairs are the simplest. However, we will deal here with a more general context of \emph{stability} over $P$. For example, the class of vector spaces over a field is not categorical in $\lam$ the sense of the definition above; still, it is \emph{almost} categorical, and it would obviously be desirable to develop a general theory that covers this example. 

%
\medskip

Recall that much of the work in classification theory follows the following general recipe. First, we assume that the theory $T$ (or, more generally, the class of models under investigation) has a particular ``bad'' model theoretic property: e.g., is unstable. Under this assumption, we prove a non-structure theorem: e.g., $T$ has many non-isomorphic models of some big enough cardinality $\lam$ (normally, in many, if not all, such cardinalities). Since we are ultimately interested in the ``good case'' -- for example, if we are trying to prove an analogue of Morley's Categoricity Theorem, we only care about theories (or classes) with few models -- we may assume from now on that $T$ falls on the ``right'' side of the dividing line: e.g., is stable. This way we can use good properties of stability in order to investigate properties of our class further. 

As perhaps should be clear from the title, in this paper we focus on the development of \emph{structure theory}. In particular, we do not prove any new non-structure results here. However, we do follow ``recipe'' described above. That is, we recall (mostly from \cite{PiSh130} and \cite{Sh234}) that certain ``bad'' properties (e.g. instability) imply non-structure, by which we normally mean many non-isomorphic models over $P$ -- sometimes only in a forcing extension of the universe, or under mild set-theoretic assumptions. Non-structure theorems become quite hard in this context, so  at the moment we are happy with just consistency results. Since we are ultimately interested in absolute properties (e.g., stability, categoricity), there does not seem to be much loss in this approach (although non-structure results in $ZFC$ are definitely on our to-do list). 
Then we restrict our attention to theories on the ``good side'' of all the dividing lines, and focus on proving structure results for this case. There is, of course, also a need for better and stronger non-structure theorems, that would ``justify''  restricting onelsef to even nicer contexts; this is a topic for future work.

\bigskip

This paper is organized as follows.

 In Section 2 we recall  non-structure results from \cite{PiSh130} and make our most basic working assumptions, e..g., every type over $P$ is definable. 
 
 Section 3 is devoted to some of the less obvious consequences of the assumptions mentioned above (already discussed to a large extent in \cite{PiSh130}). In particular, we explain why we may assume that $T$ has quantifier elimination. 

In Section 4 we revisit some basic stability theory over $P$ originally developed in \cite{PiSh130} and \cite{Sh234} and take it further. We introduce some of the major players necessary for analyzing models over their $P$-part:  complete sets and ``good'' types, which we call *-types; these are types ``orthogonal to $P$'', that is, types, realizing which do not increase the $P$-part. Based on these notions, we define the relevant notion of stability: a set $A$ is called stable (over $P$) if there are ``few'' *-types over all sets elementarily equivalent to $A$. This concept and many of its basic properties already appear  in the previous works mentioned above. However, it is our goal to make this paper reasonably self-contained, so we include all the definitions as well as details of proofs. 

Section 5 revisits a notion of rank  (already discussed in \cite{PiSh130, Sh234}) for *-types that captures stability over $P$. 

Section 6 is devoted to the existence of ``small''  (atomic, primary) saturated models over stable sets with a saturated $P$-part. 

In Section 7 we make an additional working assumption: every model of $T$ is stable over $P$ (this hypothesis is justified by a non-structure result proved in \cite{Sh234}). We then use 
stability of models together with the existence of primary models (the results of section 6) in order to obtain quantifier-free internal definitions for *-types. This is, in a sense, the main technical result of this paper, on which the rest of our analysis is built.  In particular, we immediately conclude that *-types over stable sets are definable (in $\cC$). 

In section 8 we define stationarization -- a version of non-forking independence appropriate for our context -- and examine its basic properties. 

Finally, in  Section 9 we prove the main structure results of this article. In particular, we establish stable amalgamation for models followed by a similar result for types over stable sets, show symmetry of independence over models, and draw several conclusions. 

One basic corollary of our analysis is that if every model is stable over $P$
(as mentioned earlier, a non-structure result from \cite{Sh234} implies that this is a natural assumption in our context; in addition, it  holds in many interesting examples), then every $*$-type over a model  is generically stable. In particular, we conclude that over models, independence derived from stationarization coincides with non-forking. 

\subsection*{Acknowledgements} The authors are grateful to Mariana Vicaria for a careful reading, and for numerous  comments, questions, suggestions, and corrections. We also thank Zo\'{e} Chatzidakis for helpful comments, clarifications, and instructive questions, which led to improving the readability and clarity of this paper. 

The second author  thanks Jakob Kellner for several conversations, motivating the authors to expand the background and the introductory sections, hopefully making them more clear and accessible to non-experts, and to eventually finalize the paper.  Usvyatsov is also thankful to Alf Onshuus for inspiring conversations at early stages of the writing of this paper, and to Jonathan Kirby for bringing the paper \cite{KiZil} to the authors' attention. 

\section{The Gross Non-structure Cases}

\begin{conv}
Let $T$ be a complete  first order theory, $P$ a monadic predicate in
its vocabulary. 
\end{conv}

\noindent
Let $\mathcal{C} $ be the monster model of $T$. From now on, we assume that all models of $T$ are elementary submodels of $\mathcal{C}$, and all sets are subsets of $\mathcal{C}$. \medskip
 
For $M \models T$, we denote by  $M|_P$ the set $P^M$ viewed as a substructure of $M$. Similarly, for a subset $A \subseteq M$, we denote by $M|_A$ the substructure of $M$ with universe $A$. We write $A\equiv B$ if $Th({\cal C}|_A)=Th({\cal
C}|_B)$.

We also denote $T^P = Th(\cC|_P)$. For a set $A$, we denote $P^A = A \cap P^\cC$. 

%

\bigskip

Our first dividing line concerns the connection between subsets of $P^\cC$ that are 0-definable externally (in $\cC$) and internally (in $\cC|_P$, that is, in $P^\cC$ viewed as a substructure of $\cC$). One direction of this correspondence is  straightforward:

\green{
	\begin{remark}
	\label{rmk:externaltointernal}
		For every formula $\theta(\x)$ there exists a formula $\theta^P(\x)$ such that for every $\b \in P$ we have 
		$\cC|_P \models \theta(\b)$ if and only $\cC\models \theta^P(\b)$. Moreover, for every  $M \models T$ we have
		$M|_{P} \models  \theta(\b)$ if and only $M \models \theta^P(\b)$
	\end{remark}
	\begin{proof}
		By induction on $\theta$, by replacing quantifiers in $\theta$ by  quantifiers restricted to $P$ (e.g., $\exists y \in P$). 
	\end{proof}

\begin{co}\label{co:fromTPtoT}
	Let $A$ be a set. 
	\begin{enumerate}
	\item
	If $A$ is a model (so $A \elem \cC$), then  $(A|_P = ) \cC|_{P^A} \elem  \cC|_{P}$.
	\item
	Every (partial) $T^P$-type $q(\x)$ over $P^A$ is equivalent to a $T$-type $q^P(\x)$ (over $P^A$) with \, $[\x \subseteq P] \in q^P$. 
	
	In particular, if $A$  is a $\lam$-saturated model, then $A|_P = \cC|_{P^A}$ is a $\lam$-saturated model of $T^P$.
	\end{enumerate}
\end{co}
\begin{proof}
\begin{enumerate}
\item Assume $\cC|_P \models \theta(\b)$ with $\b \in P^A$. Then (see Remark \ref{rmk:externaltointernal}) 
$\cC \models \theta^P(\b)$, hence $A \models \theta^P(\b)$. Again, by Remark \ref{rmk:externaltointernal}, 
$A|_P\models \theta(\b)$. 
\item Let $\theta(\x,\b)$ be a formula such that $A|_P \models \exists \x \theta(\x,\b)$. Then 
$$A \models \exists \x\in P\; \theta^P(\x,\b)$$ 

Hence given a $T^P$-type $q(\x)$ over $P^A$, the collection $q^P = \set{\theta^P(\x)\colon \theta(\x)\in q}$ is indeed a $T$-type, and the rest should be clear. 

\end{enumerate}

\end{proof}

\medskip

On the other hand, it is not clear (and, in general, not true) that any externally definable subset of $P^\cC$ is definable internally in $\cC|_P$. 

}

\noindent

\medskip

\begin{qst}\label{qst:1}
If $M\models T$, and $\psi (\red{\bar x})$ is a relation
on $P^M$ which is first order definable in $M$, is $\psi$ definable in
$M|_{P}$, possibly with parameters?
\end{qst}


If the answer to this question is ``no'', then, as shown in
\cite{PiSh130}, for every $\lambda\geq |T|$, $I(\lambda)\geq
Ded(\lambda)$, where $Ded(\lambda)=\sup \{ |I|:I$ is a linear order
with dense subset of power $\lambda\}$. The proof relies on earlier papers by
Chang, Makkai, Reyes and Shelah.

Following the ``general recipe of Classification Theory'' outlined in the introduction, we therefore assume that the answer to the Question \ref{qst:1} is
``yes''.

\bigskip

Furthermore we expand $T$ by the necessary individual constants 
(maybe working in
${\cal C}^{eq}$) in order to assume that all such relations are parameter-free
definable in ${\cal C}|_{P}$. Note that we only have to add elements in $dcl(P^{\cal C})$
inside ${\cal C}^{eq}$. 

\smallskip

Specifically, 
if $\cC \models (\exists\bar y \in P)(\forall\bar x \in P)
(\psi (\bar x)=\theta^P(\bar x,\bar y))$, 
%
%
\red{we define an equivalence relation on tuples of sort $\y$ as follows:

\[ \y_1 \equiv \y_2 \iff P(\y_1)\land P(\y_2) \land \forall \x \left[ \theta^P(\x,y_1) \leftrightarrow \theta^P(\x,y_2)\right] \]

\noindent
and expand $\cC^{eq}$ with names for the relevant equivalence classes; that is, for each $\psi(\x)$ as above, expand 
the language by a constant $[\y]_\psi$ for the class of a tuple $\b$ for which $\theta^P(\x,\b)$ is equivalent to $\psi(\x)$. Clearly, this does not increase the cardinality of the language.

As a result, given a formula $\psi(\x)$ defining a relation on $P$ and a formula $\theta(\x,\b)$ defining this relation in $\cC|_P$, we have that e.g. the formula $\exists \y \left( [\y]=[\y]_\psi \right) \land \theta(\x, \y)$ defines the same relation without parameters. Note that we add the new constants to $P$ and to the language of $T^P$. 


}

\begin{re} See \cite{PiSh130} for the number of such expansions for models
of $T$. If the new theory is $T^+$, note that $I_T(\lambda ,
N)=\sup\{I_{T^+}(\lambda, N^+):N^+$ an expansion of $N$ as described
above$\}$. In particular $I_T(\lambda,\mu)^{|T|}\geq
I_T(\lambda,\mu)\geq I_{T^+}(\lambda,\mu)$, so our non-structure results
are not affected.
\end{re}

\noindent
\begin{qst}\label{qst:2}
For $\psi=\psi(\bar x,\bar y)$, $\bar c\in {\cal C}$,
is $tp_\psi(\bar c/P^{\cal C})$ definable?
\end{qst}

\smallskip

Recall that $tp_\psi(\bar c/A)=\{\psi(\bar x,\bar a):\bar a\in  
A, \; \cC \models \psi(\bar c,\bar a)\}$ and $p=tp_\psi (\bar c/A)$ is definable if it
is definable by some $\theta(\bar y,\bar d)$ with $\bar d\in A$, which means that for
every $\bar a\in A$ we have  $\psi(\bar x,\bar a)\in p\Longleftrightarrow
\, \cC \models\theta(\bar a,\bar d)$. We restrict ourselves to $\psi $-types in order to be able to use
compactness arguments.

\smallskip

Again, it is shown in  \cite{PiSh130} that if the answer to this question is ``no'', then $I(\lambda)\geq
Ded(\lambda)$. The two questions are closely related (the second one is a version of the first one with external parameters), and the proofs are similar, and, in fact, both work
for pseudo-elementary classes.

\bigskip

In conclusion, for the rest of the paper we make the following assumptions:

\begin{hyp}\label{asm:1} (\emph{\underline{Hypothesis 1})}

  $T$ is a  complete first order theory 
, 
  $P$ a monadic predicate in the language of $T$, $\mathcal{C} $ is the monster model of $T$ such that  
  
\begin{enumerate}
 \item  Subsets $P^\mathcal{C}$  that are 0-definable (in $\mathcal{C}$),  are already  0-definable in
$\mathcal{C}|_{P}$. 
 \item Every type over $P^\mathcal{C}$ is definable. 
\end{enumerate}
\end{hyp}

%
%
%


Later we shall add an additional clause to this hypothesis regarding quantifier elimination of $T$; see Hypothesis \ref{hyp:1.5}.

\medskip
For simplicity we also make the set theoretic assumption that for
arbitrarily large $\lambda$, we have $\lambda ^{<\lambda}=\lambda$ (in particular, $T$ has arbitrarily large saturated models). Note that for any conclusion we draw
which says something about every \red{$\lambda$}, this hypothesis can be eliminated.


\section{Internal and external definability}



In this section we observe several basic consequences of Hypothesis 1. 

\medskip

First we reformulate the assumption  that every type over $P^\cC$ is definable in the following obviously equivalent  way: every externally definable subset of $P^\cC$ is also internally definable in $\cC|_P$ (with parameters in $P^\cC$). 

\begin{obs}
\label{obs:def_stablyembedded}
\begin{enumerate}
\item
	Let $\ph(\x,\a)$ define (in $\cC$) a subset of $P^\cC$.  Then there exists a formula $\hat\theta(\x,\b)$ with $\b \in P^\cC$ that defines the same set. Moreover, there exists a formula $\theta(\x,\b)$ that defines the same set in $\cC|_P$. 
\item
	Furthermore, for any $A \subseteq P^\cC$ such that $\tp(\a/P^\cC)$ is definable over $A$, $\theta(\x,\b)$ in the previous clause can be chosen so that $\b \in P^A$.
\end{enumerate}
\end{obs}
\begin{proof}
	By Hypothesis \ref{asm:1}(ii),  $\tp(a/P^\cC)$ is definable. Hence there is $\hat\theta(\x,\b)$ with $\b \in P^\cC$ such that $\cC \models \forall \x \left( \hat\theta(\x,\b) \longleftrightarrow \ph(\x,\a) \right)$. By Hypothesis \ref{asm:1}(i), there exists $\theta(\x,\y)$ such that for every $\c,\d \in P^\cC$ we have $\cC \models \hat\theta(\c,\d)$ if and only if 
	$\cC|_P \models \theta(\c,\d)$. Clearly $\theta(\x,\b)$ is as required (in both clauses of the Observation).
\end{proof}

\smallskip


\begin{co}\label{co:P-type}
\begin{enumerate}
\item
	 Let $p(\x)$ be a (partial) type over $\cC$ such that $P(\x) \in p$. Then $p$ is equivalent to a $T^P$-type $p'$ over 
	$P^\cC$.
%
\item
	Let $A$ be a set. Assume that for every $\a \in A$ the type $\tp(\a/P^\cC)$ is definable over $P^A = P^\cC\cap A$. Let $p$ be a (partial) type over $A$ with $P(x) \in p$. Then $p$  is equivalent to a ($T$-)type $\hat p$ over $P^A$, and to a $T^P$-type $p'$ over $P^A$.
\end{enumerate}
\end{co}
%


The following corollary again follows  immediately, but is  very useful. 

\begin{obs}\label{obs:P-sat}
	Let $A$ be a set such that for every $\a \in A$ the type $\tp(\a/P^\cC)$ is definable over $P^A$. Let $\lam$ be a 
	cardinal.
	\begin{enumerate}
	\item
	Assume that $\cC|_{P^A}$ is a $\lam$-saturated (or just $\lam$-compact) model of $T^P$. Then every $T$-type 
	of size $<\lam$ over $P^A$ which is finitely satisfiable in $P^A$ is realized in $P^A$.  
	
	In fact, it is enough to assume that any $T^P$-type of size $<\lam$ over $P^A$ which is finitely satisfiable in $P^A$ is realized in $P^A$ (so $\cC|_{P^A}$ does not need to be itself a model of $T^P$). 
	
	\item
		If $P^A$ satisfies the conclusion of the clause (i), then every type $p$ of size $<\lam$ over $A$ with $P(x) \in p$
		is realized in $P^A$. 
		
	\end{enumerate}
\end{obs}
\begin{proof}
	For clause (i), let $p$ is a $T$-type of size $<\lam$ over $P^A$ finitely satisfiable in $P^A$. By Corollary \ref{co:P-type}(ii), it is equivalent to a $T^P$-type over $P^A$, and so is realized in $P^A$. For clause (ii), note that, again by  Corollary \ref{co:P-type}(ii), such a type $p$ is equivalent to a type of size $<\lam$ over $P^A$ (and clearly it is finitely satisfiable in $P^A$). 
\end{proof}

Let us formulate a simple version of the last Observation, which will be particularly useful:

\begin{co}\label{co:P-sat}
	Let $A$ be a set such that for every $\a \in A$ the type $\tp(\a/P^\cC)$ is definable over $P^A$. Let $\lam$ be a 
	cardinal, and assume that $\cC|_{P^A}$ is a $\lam$-saturated model of $T^P$. Then every $T$-type $p$ of size $<\lam$ over $A$ with $P(x) \in p$
		is realized in $P^A$. 

\end{co}

\begin{re}\label{re:compactvssat}
	In the rest of the paper, we will prove  claims about sets and models under the assumption that their $P$-part is 
	$\lam$-saturated. As a matter of fact, all we'll need in these claims is the conclusion of Observation \ref{obs:P-sat} (i). So
	in particular assuming that $P^A$ is a ``$\lam$-compact subset'' of $\cC$ (every $T$-type of size $<\lam$ which is finitely satisfiable in $P^A$ is realized in $P^A$) is enough. But we will not focus on this.
\end{re}

\medskip

A perhaps less obvious consequence of Hypothesis 1 is that $T$ can be Morleyrized without much additional cost. 

In classical Classification Theory (classifying first order theories), assuming that $T$ has quantifier elimination often comes without much cost (e.g., by Morleyzation). However, in our case, this assumption may appear less harmless. Indeed, in general, if $\cC$ is expanded with new predicates, then  so is $\cC|_P$, potentially enhancing the theory $T^P$, hence  changing the original classification problem. Fortunately, since Morleyzation of $T$ does not add new $0$-definable sets to $\cC$, if Hypothesis 1 is true, no new $0$-definable sets are added to $\cC|_P$ either. In other words: 

\begin{obs}\label{obs:QE_harmless}
	Let $T'$ is the Morleyzation of $T$, and let $\cC'$ be the expanded monster model (of $T'$). Let $T'^P$ be the theory 
	of $\cC'|_P$. Then $T'^P$ is a trivial expansion of the Morleyzation of $T^P$ (that is, it is an expansion that adds no new $0$-definable sets). 
\end{obs}
\begin{proof}
	On the one hand, if $\theta(\x)$ be a $T_P$-formula,  let $\theta^P(\x)$ be the formula that defines the same subset of $P^\cC$ in $\cC$ (as in Remark \ref{rmk:externaltointernal}). Let $R(\x)$ be a $T'$-predicate equivalent (modulo $T'$) to $\theta^P(\x)$; then it also defines the same subset of $\cC'_P$, hence is equivalent to $\theta(\x)$ modulo $T'^P$. In conclusion, $T'_P$ \emph{expands} the Morleyzation of $T^P$ (this part is, of course, always true). 
	
	On the other hand, let $R(\x)$ be a new predicate in $T'^P$. Then it is equivalent modulo $T'$ to a $T$-formula 
	\[
		\x \subseteq P \bigwedge \ph(\x)
	\]
	
	By Hypothesis 1, there is a $T^P$-formula $\theta(\x)$ (without parameters) defining the same subset of $P^\cC$. 
	Therefore any $T'_P$ formula $\theta'(\x)$ is equivalent to a $T_P$ formula, as required. 
\end{proof}


\smallskip

%
%
%
%
%
%

%
%
%
%
%

\medskip






We conclude this section with a few trivial consequences of quantifier elimination in $T$. Some more interesting ones will be discussed in the next section (e.g., Lemma \ref{lem:complete_QE}).

First we note that some of the observations above become trivially true for any substructure of $\cC$ (which in our case just means a set containing all the individual constants), for example:

\begin{remark}\label{rem:QEA }Assume that $T$ has QE and let $A$ be a substructure of $\cC$.
\begin{enumerate}
\item
	Any externally definable subset of  $A$ is definable internally in $\cC|_A$. A $T$-type over $A$ is also a type 
	in $\cC|_A$. 
\item
	If $\cC|_A$ is $\lam$-compact, that is, it is a $\lam$-compact model of the theory $Th(\cal{C}|_A)$. Then  
	every $T$-type of size $<\lam$ which is finitely satisfiable in $A$ is realized in $A$. 	
\end{enumerate}
\end{remark}
 
 Quantifier elimination also adds to the understanding of $T^P$:
 
 \begin{remark}\label{rem:QEP} Assume that $T$ has QE and let $A$ be a substructure of $\cC$.
\begin{enumerate}
\item
	$T^P$ has QE.
\item
	Every subset of $P^\cC$ definable in $\cC$ (with or without parameters) is definable by the same quantifier
	free formula (with or without parameters, respectively) both in $\cC$ and in $\cC|_P$. 
\item
	Corollary \ref{co:P-type}(ii) can be strengthened to: both $\hat p$ and $p'$ are equivalent to the same quantifier 
	free type.

\end{enumerate}
\end{remark}

%
%

%

%
%

\bigskip 

%
%
%
%

From now on, to simplify the notation, when no confusion should arise, we will write $P$ for $P^\cC$. Also, for a set $A$, we will often denote by $A$ both the set and the substructure of $\cC$ with universe $A$. So for example, when we write that $A\cap P^\cC$ is $\lam$-saturated, or just that $A\cap P$ is $\lam$-saturated, we mean that the substructure $\cC|_{A\cap P^\cC}$ is a $\lam$-saturated model of the appropriate theory.

\section{Completeness and relevant types}

%

In trying to reconstruct $M$ from $M| P^M$ one needs to work with sets $A$ satisfying
$P^M\subseteq A\subseteq M$. Such $A$ have the following property (which under a certain assumption on saturation of $P^M$ characterizes such sets; see Proposition \ref{8} below), that can be viewed as an analogue to
Tarski-Vaught Criterion for being an elementary submodel:

\begin{de}\label{6}
$A\subseteq {\cal C}$ is {\it complete} if for every
formula $\psi(\bar x,\bar y)$ and $\bar b\subseteq A, \models
(\exists\bar x\in P)\psi(\bar x,\bar b)$ implies $(\exists
\bar a\subseteq P\cap A)\models \psi(\bar a,\bar b)$. 
\end{de}

The following useful characterization offers a different understanding of the notion of completeness:

\begin{obs}
\label{obs:complete_characterization}
A set $A$ is complete if and only if for every $\bar a\subseteq A$ and
$\phi(\bar x,\bar y)$  the $\phi$-type $tp_\phi (\bar a/P^{\cal C})$ is definable
over $A\cap P^{\cal C}$ and $A\cap P^{\cal C}\prec P^{\cal C}$.
\end{obs}

\begin{proof}
Assume that $A$ is complete. First we use the Tarski-Vaught criterion in order to show that $A\cap P^{\cal C}\prec P^{\cal C}$. Indeed, if $P^{\cal C} \models \exists x \theta(x, \b)$  with $\b \in A$, then (recall Remark \ref{rmk:externaltointernal}) $\cC\models \exists x P(x) \land  \hat\theta(x, \bar b)$. By completeness of $A$, there exists $c \in A$ such that $\cC \models P(c) \land  \hat\theta(c, \bar b)$, hence $P^{\cal C} \models \theta(c,\b)$, as required. 

Now let  $\bar a\subseteq A$. We know that $tp_\phi (\bar a/P^{\cal C})$ is definable
over $P^{\cal C}$; so suppose $\cC \models \forall \y \theta(\y, \c) \iff \phi(\a,\y)$ for some $\c \in P^{\cal C}$. By completeness, such a $\c$ exists already in $A\cap P^{\cal C}$. 

\smallskip

For the other direction, assume that $\cC \models (\exists \x \in P) \phi(\x, \a)$ with $\a \in A$. Since $tp_\phi (\bar a/P^{\cal C})$ is definable
over $A\cap P^{\cal C}$, the formula $\phi(\x, \a)$ is equivalent to a formula $\theta(\x, \d)$ for some $\d \in A\cap P^{\cal C}$. So $\cC \models (\exists \x \in P) \theta(\x, \a)$, hence $P^\cC \models \hat \theta(\c, \d)$ for some $\c$. Since $A\cap P^{\cal C}\prec P^{\cal C}$, such $\c$ exists also in $A\cap P^{\cal C}$; and we have $\models \theta(\c, \d)$, as required. 


\end{proof}

\smallskip

\begin{fact} \label{10}For any complete $A$ there are $\langle \Psi_\psi :\psi(\bar
x,\bar y)\in L(T)\rangle $ (depending on $A$) such that for all
$\bar a\subseteq A$, $tp_\psi (\bar a/P\cap A)$ is definable by
$\Psi _\psi (\bar y,\bar c)$ for some $\bar c\subseteq A\cap P$.
\end{fact}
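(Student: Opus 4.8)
The plan is to exploit the uniform definability of $\psi$-types over an arbitrary parameter set, which holds because $T$ is stable over $P$ in the sense encoded by our hypotheses (in particular, by Hypothesis~\ref{asm:1}(2) every type over $P^{\mathcal{C}}$ is definable, and one argues uniformly). Fix a formula $\psi(\bar x,\bar y)\in L(T)$. For each $\bar a\subseteq A$ the $\psi$-type $tp_\psi(\bar a/P^{\mathcal{C}})$ is definable; since $A$ is complete, by Fact~\ref{complete}(iii) this type is in fact definable \emph{over} $A\cap P^{\mathcal{C}}$, and moreover $A\cap P^{\mathcal{C}}\prec P^{\mathcal{C}}$. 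So for each individual $\bar a$ there is \emph{some} formula $\chi(\bar y,\bar c)$ with $\bar c\subseteq A\cap P$ defining $tp_\psi(\bar a/A\cap P)=tp_\psi(\bar a/P^{\mathcal{C}})\restriction (A\cap P)$. The content of the Fact is that a \emph{single} schema $\Psi_\psi(\bar y,\bar c)$ — one formula, with the parameter $\bar c$ varying — works for all $\bar a\subseteq A$ simultaneously.

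First I would recall the standard fact that in a stable theory the defining formula of a $\psi$-type can be taken from a finite set of formulas $\{\theta_1(\bar y,\bar z_1),\dots,\theta_k(\bar y,\bar z_k)\}$ depending only on $\psi$ (a $\psi$-type is definable by a positive Boolean combination of instances of $\psi$, or more carefully by one of finitely many such schemata). The obstacle here is that we are not in a first-order stable theory in the naive sense; stability is ``over $P$''. So the right move is to transfer the problem to $P^{\mathcal{C}}$ itself: by Hypothesis~\ref{asm:1}(1) the structure $\mathcal{C}|P^{\mathcal{C}}$ is one whose definable subsets coincide with those definable in $\mathcal{C}$, and the assumption that every type over $P^{\mathcal{C}}$ is definable (Hypothesis~\ref{asm:1}(2)) is exactly a stability-type statement for the induced structure on $P$. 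Then one wants local finiteness of the $\psi$-definitions on $P^{\mathcal{C}}$: this is where one invokes (the local version of) definability of types — by a compactness argument the map $\bar a\mapsto d_{\psi}\bar a$ has finite image up to the choice of parameters, i.e.\ there is a finite list of schemata. Pick such a finite list $\Psi_\psi^1,\dots,\Psi_\psi^m$; then take $\Psi_\psi$ to be a single formula coding ``the correct one among these'' using an extra block of parameters (or simply observe that by further massaging — replacing the finite disjunction over which schema is used, guarded by definable conditions on the parameters — one gets a single $\Psi_\psi(\bar y,\bar c)$).

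The key steps, in order: (1) fix $\psi$; for each $\bar a\subseteq A$ use Fact~\ref{complete}(iii) to get a definition of $tp_\psi(\bar a/P^{\mathcal{C}})$ over $A\cap P^{\mathcal{C}}$; (2) using Hypothesis~\ref{asm:1} and a compactness/local-stability argument, show there is a finite set of definition schemata (in $\bar y$ and a parameter block) that suffices for all $\bar a$ — equivalently, the ``$\psi$-rank'' is finite or at least the type-definition is uniformly bounded; (3) amalgamate the finitely many schemata into one formula $\Psi_\psi(\bar y,\bar c)$ by case division on definable properties of the parameter $\bar c$, so that for every $\bar a\subseteq A$ there is $\bar c\subseteq A\cap P$ with $tp_\psi(\bar a/A\cap P)$ defined by $\Psi_\psi(\bar y,\bar c)$; (4) do this for each $\psi\in L(T)$ to obtain the family $\langle \Psi_\psi : \psi\in L(T)\rangle$, noting the family depends on $A$ only through the requirement $\bar c\subseteq A\cap P$ (and on $Th(\mathcal{C}|A)$ via completeness).

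The main obstacle I anticipate is step (2): establishing that finitely many definition schemata suffice. In ordinary stability this is the statement that every $\psi$-type over a model is definable by a formula of bounded complexity, proved via ranks or via the order property. Here one must run the analogous argument inside the induced structure on $P^{\mathcal{C}}$, using that $\mathcal{C}|P^{\mathcal{C}}$ is stable (which is essentially what Hypothesis~\ref{asm:1}(2) grants us) and that completeness of $A$ lets us push the parameters down into $A\cap P$. Once uniform bounded definability is in hand, step (3) — packaging finitely many schemata into one — is routine first-order bookkeeping, and I would not belabor it. It is likely that the author instead derives this Fact as a quick corollary of the rank machinery introduced immediately afterward, or directly from stability of $A$ together with Fact~\ref{complete}(iii); either route is acceptable, but the heart of the matter is the uniformity, i.e.\ reducing from ``each $\bar a$ has its own definition'' to ``one schema, varying parameters''.
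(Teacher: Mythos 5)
Your outline matches the paper's proof: fix $\psi$, argue that only finitely many definition schemata are needed, then merge them into a single $\Psi_\psi$. The paper carries out step (2) much more directly than you anticipate: pass to a $|T|^+$-saturated model; if no finite list of schemata sufficed, then by compactness there would be an element whose $\psi$-type over $P^{\mathcal{C}}$ is not definable at all, contradicting Hypothesis~\ref{asm:1}(2). No local rank, no appeal to stability of the induced structure on $P^{\mathcal{C}}$ — just the definability assumption plus compactness. For step (3) the paper invokes the standard coding trick of [Sh:c, II\S 2], using $|P^{\mathcal{C}}|\geq 2$ to encode the choice among $\Psi^1_\psi,\dots,\Psi^n_\psi$ in extra parameters, which is the ``case division'' you describe. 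One caution on your closing remark: the Fact is \emph{not} derived from the rank machinery — Definition~\ref{R}(iv) already refers to $\Psi_{\psi_i}$ from this Fact, so it must (and does) come first, by the bare compactness argument above.
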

\begin{proof}
By compactness, go to a $|T|^+$-saturated model. So, for each $\psi$ we
have but finitely many candidates $\Psi^1_\psi,\dots , \Psi ^n_\psi$ \red{(or else, by compactness, there is an undefinable type)}.
As without loss of generality $|P^{\cal C}|\geq 2$, we can manipulate
these as in \cite{Sh:c} II$\S$2  to an $\Psi _\psi$.
\end{proof}

\smallskip

The following properties of complete sets are clear
:

\begin{fact}\label{complete_simple}\label{7}
\begin{itemize}
\item[(i)] If $M\prec {\cal C}$ and $P^M\subseteq A\subseteq M$, then
$A$ is complete.
\item[(ii)] If $\langle B_i:i<\delta\rangle $ is an increasing sequence of complete sets, then
$\bigcup_{i<\delta} B_i$ is complete.
\end{itemize}
\end{fact}

Furthermore, if $T$ has QE, then the property of completeness for a set $A$  depends only on its first order theory (as a substructure of $\cC$):

\begin{lem}\label{lem:complete_QE}\label{7.5}(T has QE)
\begin{itemize}
\item[(i)] If $A_1\equiv A_2$, then $A_1$ is complete iff $A_2$ is
complete.
\item[(ii)] $A$ is complete iff whenever the
sentence $$\theta=:(\forall \bar y)[S(\bar y)\leftrightarrow (\exists x\in P)
R(x,\bar y)]$$ for quantifier free 
$R,S$ is satisfied in ${\cal C}$, then
$A$ satisfies $\theta$.

\medskip

Furthermore, if $T$ has QE down to the level of predicates (e.g., $T$ has been Morleyized), then it is enough to consider  all the formulas above with $R,S$ predicates. 
\end{itemize}
\end{lem}

\begin{proof}

\begin{itemize}

 \item[(i)] Follows easily from (ii), but we also give a direct proof. Assume that $A_1\equiv A_2$ and $A_1$ is complete. Let us prove this for $A_2$. 
 
 Let $\ph(\x,\y)$ be a formula. Since $T$ has QE, there are quantifier free formulae $\theta(\y)$ and $\theta'(\x,\y)$ such that:
 
 \begin{itemize}
 \item[(1)] \[ \cC \models \forall \y \left[ \left(\exists \x \in P  \ph(\x,\y) \right) \longleftrightarrow \theta(\y) \right] \]
 \item[(2)] \[ \cC \models \forall \x\y \left[  \ph(\x,\y)  \leftrightarrow \theta'(\x, \y) \right] \]
 \end{itemize}

Since $A_1$ is complete (combining the definition with (1) and (2) above), we have that whenever $\cC \models \theta(\a_1)$ for $\a_1 \in A_1$, there is some $\c \in A_1 \cap P$ such that $\cC \models \theta'(\c, \a_1)$. But since these formulae are quantifier free, clearly $A_1$ (as a substructure) satisfies 

\[ A_1 \models \forall \y \theta(\y) \longrightarrow \exists \x \in P \theta'(\x,\y) \]

Since $A_1 \equiv A_2$, so does $A_2$, which implies completeness. 

\item[(ii)] Similar [and easier]: If $A$ is complete and $\cC \models \theta$, assume that $A \models S(\b)$ for some $\b$; then (since $S$ is quantifier free), so does $\cC$, hence $\cC \models (\exists \x \in P) R(\x,\b)$, and by completeness of $A$, there is such $\x$ already in $P \cap A$. So $A \models \theta$. Conversely, assume that whenever $\cC \models \theta$, so does $A$. Let $\ph(\x,\y)$ be a formula, and assume that $\cC \models (\exists \x \in P) \ph(\x,\b)$.

As in the proof of (iv), let $R(\x,\y)$ and $S(\y)$ be quantifier free such that

\begin{itemize}
 \item[(1)] \[ \cC \models \forall \y \left[ \left(\exists \x \in P  \ph(\x,\y) \right) \longleftrightarrow  S(\y) \right] \]
 \item[(2)] \[ \cC \models \forall \x\y \left[  \ph(\x,\y)  \leftrightarrow R(\x, \y) \right] \]
 \end{itemize}
 
Clearly, $\cC \models \theta$ (with these specific $R$ and $S$), hence so that $A$. But  $\cC \models S(\b)$, hence  so does $A$; so $A \models (\exists \x \in P) R(\x,\b)$, hence $A \models R(\a, \b)$ for some $a \in A \cap P$, hence so does $\cC$, and we are clearly done.

\smallskip
The ``futhermore" part is trivial. 
\end{itemize}
\end{proof}

\smallskip

%

We make a few remarks on the relation between $P$ and the algebraic closure in complete sets. 

\begin{obs}\label{obs:acl}

 Let $A$ be a complete set. Then:
 \begin{enumerate}
 \item
 	$A \cap \acl(P^\cC) \subseteq \acl(P^A)$
\item
	$\acl(A)  \cap P^\cC = P^A$
\end{enumerate}
 \end{obs}
 \begin{proof}
 \begin{enumerate}
 \item 
  	Let $d \in A  \cap \acl(P^\cC)$ (in this proof we will distinguish between tuples and singletons). Then there is a formula $\ph(x,\a)$ with $\a \in P^\cC$ which has $k<\om$ solutions in $\cC$ such that $\models \ph(d,\a)$. Since $A$ is complete
	(see Observation \ref{obs:complete_characterization}), the type $\tp(d/P^\cC)$ is definable over $P^A$, so the set 
	\[
		\set{\y \in P \colon \ph(d,\y) \wedge \exists^k x \ph(x,\y)}
	\]
	
	is definable  in $\cC|_P$ by a formula $\theta(\y)$  over $P^A$. Again, since $A$ is complete, $P^A \elem P^\cC$, so $\theta(\y)$ 
	has a solution in $P^A$, which finishes the proof.

 \item
 	 	Let $d \in \acl(A)  \cap P$. So there is $\ph(x,\a)$ with $\a \in A$ with $k<\om$ solutions in $\cC$ such that $\ph(d,\a)$ 
	holds. Just like in the proof of clause (i), the set $D = \set{x \in P\colon \models \ph(x,\a)}$ is definable 
	in $\cC|_P$ over $P^A$ and is non-empty. Clearly, $D$ is finite. Since $P^A \elem P^\cC$, $D \subseteq P^A$, as 
	required. 

\end{enumerate}
 \end{proof}

\smallskip

The characterization of completeness (Observation \ref{obs:complete_characterization}), in combination with Corollaries \ref{co:P-type} and \ref{co:P-sat}, yields the following important property of complete sets:

\begin{co}\label{co:complete-types}
	Let $A$ be complete and let $p$ be a (partial) type over
	$A$ with $``\x \subseteq P" \in p$. Then $p$ is equivalent both to a $T$-type and a $T^P$-type over
	$P^A$.  If, in addition, $P^A$ is $\lam$-compact, and $|p|<\lam$, then $p$ is realized in $P^A$.
\end{co}

\smallskip

We are now ready to define one of the most basic objects of our study: the relevant notion of type for this context.

\begin{de}\label{6.5}\label{dfn:startypes}
\begin{itemize}
\item[(i)] For a complete set $A$, let
$$S_*(A)=\{tp(\bar c/A):P\cap (A\cup \bar c)=P\cap A {\rm\ and}\ A\cup \bar
c\ {\rm is\ complete}\}$$
\item[(ii)] $A$ is \emph{stable over $P$}, or simply \emph{stable}, if (it is complete and) for all $A^\prime$
with $A^\prime\equiv A$, $|S_*(A^\prime)|\leq |A^\prime|^{|T|}$.
\end{itemize}
\end{de}

\begin{remark}
\begin{enumerate}
\item Even though ``stability over $P$'' is a more appropriate and accurate name for our notion of stability of a set (and the term ``stable set'' exists in literature, and has a different meaning), since we  have only one notion of stability in this article (stability over $P$), we will mostly omit ``over P'' and simply write ``stable''. 
\item Sometimes we refer to types in  $S_*(A)$ as \emph{complete types over $A$ which are orthogonal to $P$}. 
\end{enumerate}

\end{remark}

\begin{remark} 
	Note that by Observation \ref{obs:acl}, if $A$ is complete and $\c$ a tuple such that $tp(\c/A) \in S_*(A)$, then $\acl(A \cup \c)  \cap P^\cC = P^A$.
\end{remark}

The last remark also follows from the following more general criterion for a complete type being a *-type:

\begin{obs}\label{8.5}
 Let $A$ be a complete set and $\c$ a tuple. Then $tp(\c/A) \in S_*(A)$ if and only if for every formula $\psi(\x,\a,\y)$ over $A$ we have 
 \[ \models \exists \x \in P\, \psi(\x,\a,\c) \Longrightarrow \exists \bar b \in P\cap  A \, \text{such that} \, \models \psi(\bar b,\a,\c) \]
 \end{obs}
 
 \begin{proof}
 	Let $tp(\c/A) \in S_*(A)$, and let $\psi(\x,\a,\y)$ be a formula as above satisfying $\models \exists \x \in P\, \psi(\x,\a,\c)$. Since the set $A \cup \c$ is complete, the type $\tp(\a\c/P^\cC)$ is definable over $P \cap (A \cup \c) = P \cap A$. Hence the set $D = \set{\x \in P \colon \models \ps(\x, \a, \c)}$ is definable in $\cC|_P$ over $P^A$. This set is not empty by the assumption. Since $A$ is complete, $P^A \elem P^\cC$, hence the set $P^A \cap D$ is nonempty as well, as required for the ``only if'' direction. 
	
	Now assume that the right hand of the equivalence holds. First we note that $P \cap (A \cup \c) = P \cup A$. Indeed, 
	if $d \in P \cup (A \cup \c)$, then the formula $x = d$ has a solution in $P$, hence in $P^A$, so $d \in P^A$. Now  obviously (since $A$ is complete) $P^{A \cup \c} \elem P^\cC$. By Observation \ref{obs:complete_characterization}, $A \cup \c$ is complete. 
 \end{proof}

\smallskip

%
%
%

\medskip

The following  lemma shows that, although the definition of types orthogonal to $P$ may seem quite strong, such types are not very hard to come by.

\begin{lem}(\underline{The Small Type Extension Lemma})\label{extension}\label{9}\label{le:typeextension}
If $A \prec \cal C$ is saturated (or just $A\cap P$ is $|A|$-compact) and $p(\bar x)$ is an $L(T)$-type over $A$
of cardinality $<|A|$,  then there is some $\red{p^*}(\bar x)\in S_*(A)$
extending $p$.
\end{lem}

\begin{proof} To prove this, we have to
show that $p$ is realized by some $\bar c\in {\cal C}$ such that $P^{\cal
C}\cap (A\cup \bar c)=P^{\cal C}\cap A$ and $A\cup \bar c$ is complete.
So we have to extend $p$ in such a way that whatever part of $p$ that can be
realized inside $P$, has to be realized in $P\cap A$. 

Let $\seq{\psi_i(\bar z, \bar b, \x)\colon i < |A|}$ list all the formulas over $A$. Now define 
inductively for $i<|A|$ and $\psi_i(\bar z,\bar b,\bar x)$
(so $\bar b\in A$), a consistent $T$-type $p_i(\bar x), 
|p_i|<|p|^++|i|^++\aleph _0$ and $p_i$
increasing continuously, making sure that the requirements are met.

Let $p_0=p$. If $p_i(\bar x)\cup \{ (\not\exists\bar z\in P)\psi (\bar
z,\bar b,\bar x)\}$ is consistent, let it be $p_{i+1}$. If not,
consider the
type $q(\bar z)=\{\exists \bar x\phi (\bar x)\wedge\psi(\bar z,\bar b,\bar x)\red{\colon \phi(\x) \in p_i }\}\cup\{\bar z\in P\}$.
By compactness and
saturation of $A$ there is some $\bar c\subseteq P\cap A$ such that
$p_{i+1}:=p_i(\bar x)\cup \{\psi (\bar c,\bar b,\bar x)\}$ is consistent. 

In fact, since $A$ is complete, by Corollary \ref{co:complete-types}, $q$ is equivalent to a $T^P$-type over $P \cap A$. Hence  (since $|q|<|A|$), it is enough to assume that $P\cap A$ is $|A|$-compact.

\smallskip

By compactness $\red{p^*} = \bigcup _ip_i$ is realized by some $\bar a\in {\cal C}$. By Observation \ref{8.5}, 
$\tp(\a/A) \in S_*(A)$, as required. 


\end{proof}

\medskip

This Lemma yields another characterization of completeness, justifying, in some sense, the original motivation behind this definition (see the discussion in the very beginning of this section). 

\begin{pr}\label{approx}\label{8}\label{prp:complete_exnetdabletomodel}
Suppose that $A\cap P$ is $|A|$-compact. Then $A$ is complete if and only if
there exists an $M\prec {\cal C}$ with $P^M\subseteq A\subseteq M$.
If $|A|=|A|^{<|A|}>|T|$, we can add ``$M$ saturated''.
\end{pr}
\begin{proof}
The direction $\Leftarrow$  is trivial.
For the other direction we
construct a model inductively, using Fact ~\ref{complete_simple} for the limit stages
and the previous lemma for the successor stages.
\end{proof}

%
%

%

\smallskip

As we want to reconstruct $M$ from $P^M$ using some complete $A$ as an
approximation, it makes sense to look at $S_*(A)$ as candidates for
types to be realized. So with $S_*(A)$ small ($A$ stable), this task appears to be easier, as there
is less choice. 

\smallskip

The rest of the paper is devoted to the study of stable (in particular complete) sets. 

\medskip

In light of Lemma \ref{7.5}, since, in the definition of a stable set, instead of looking at a specific set $A$, we need to consider the class of all subsets $A'$ of $\cC$ with $A' \equiv A$, it would be very useful to ensure that $T$ has quantifier elimination. Fortunately, by the discussion in the previous section (specifically, by Observation \ref{obs:QE_harmless}), we can Morleyize $T$ with not much cost. Therefore for the rest of the paper we strengthen Hypothesis 1 and add

\begin{hyp}\label{hyp:1.5}(\underline{\emph{Hypothesis 1'}})

\noindent 
\begin{itemize}
\item[(iii)]
	$T$ has quantifier elimination, even to the level of predicates.
\end{itemize}
\end{hyp}

%
%



\section{Stability and rank}

In this section we begin the investigation of stable sets, and introduce a notion of rank that ``captures'' stability. 

\smallskip 

%

\begin{de}\label{de:internallydef}
	We say that a type $p \in S(A)$ is \emph{internally definable} if for every formula $\phi(x,y)$, the set 
	$\set{a \in A\colon \ph(x,a) \in p}$ is internally definable in $\cC|_A$. We say that $p$ is internally definable over 
	$B \subseteq A$ if the set above is internally definable in $\cC|_A$ with parameters in $B$. 
\end{de}

\begin{de}\label{R}
For a complete set $A$, a (partial) $n$-type $p(\bar x)$ (with parameters in ${\cal C}$), 
  sets $\Delta _1,\Delta _2$ of formulas $\psi (\bar
x,\bar y)$, and a cardinal $\lambda$, we define when $R^n_A(p,\Delta
_1,\Delta _2,\lambda)\geq \alpha$. We usually omit $n$.

\begin{itemize}
\item[(i)] $R_A(p,\Delta _1,\Delta _2,\lambda)\geq 0$ if $p(\bar x)$ is consistent. 

\item[(ii)] For $\alpha$ a limit ordinal: $R_A(p,\Delta _1,\Delta
_2,\lambda)\geq \alpha$ if $R_A(p,\Delta _1,\Delta
_2,\lambda)\geq \beta$ for every $\beta <\alpha$.

\item[(iii)] For $\alpha =\beta +1$ and $\beta$ even:
For $\mu<\lambda$ and finite $q(\bar x)\subseteq p(\bar x)$ we can
find $r_i(\bar x)$ for $i\leq\mu$ such that;

\begin{itemize}
\item[{1.}] Each $r_i$ is a $\Delta _1$-type over $A$,

\item[{2.}] For $i\not= j, r_i$ and $r_j$ are explicitly contradictory
(i.e. for some $\psi$ and $\bar c$, $\psi (\bar x,\bar c)\in r_i,
\neg\psi(\bar x,\bar c)\in r_j$).

\item[{3.}] $R_A(q(\bar x)\cup r_i(\bar x), \Delta _1,\Delta
_2,\lambda)\geq \beta$ \red{for all $i$}.
\end{itemize}

\item[(iv)] For $\alpha =\beta +1: \beta$ odd: For 
$\mu<\lambda$ and finite $q(\bar x)\subseteq p(\bar x)$ and $\psi
_i\in \Delta _2, \bar d_i\in A$ ($i\leq \mu$), there are $\bar b_i\in
A\cap P$ such that $R(r_i,\Delta _1,\Delta
_2,\lambda)\geq \beta$ where $r_i=q(\bar x)\cup \{(\forall\bar
z\subseteq P) \left[\psi _i(\bar x,\bar d_i,\bar z)\equiv\Psi_{\psi
_i}(\bar z,\bar b_i)\right] \colon i<\mu\}$ where $\Psi_{\psi _i}$ is as in Fact \ref{10}.

\end{itemize}

$R^n_A(p,\Delta _1,\Delta _2,\lambda)= \alpha$ if $R^n_A(p,\Delta
_1,\Delta _2,\lambda)\geq \alpha$ but not $R^n_A(p,\Delta
_1,\Delta _2,\lambda)\geq \alpha +1$. $R^n_A(p,\Delta
_1,\Delta _2,\lambda)=\infty$ iff $R^n_A(p,\Delta
_1,\Delta _2,\lambda)\geq \alpha$ for all $\alpha$.
\end{de}

The main case for applications will be $\lambda =2$. Note that the
larger $R^n_A(p,\Delta _1,\Delta _2,\lambda)$, the more evidence there
is for the existence of many types $q(\bar x)\in S_*(A)$ consistent with
$p(\bar x)$.

\begin{fact}\label{rank}
\begin{itemize}
\item[(i)] The rank $R^n_A(p,\Delta _1,\Delta _2,\lambda)$ is
increasing in $A, \Delta _1$ and decreasing in $p,\Delta _2,\lambda$.

\item[(ii)] For every $p$ there is a finite $q\subseteq p$, such that 
$R_A(p,\Delta _1,\Delta _2,\lambda)=R_A(q,\Delta _1,\Delta _2,\lambda)$.

\item[(iii)] For any $A$ and any finite $\Delta _1,\Delta _2,\lambda, m$ and
$\psi (\bar x,\bar y)$ there is a formula $\theta (\bar y)\in L({\cal
C}| A)$ such that for $\bar b\subseteq A$ $R_A(\psi (\bar
x,\bar b),\Delta _1,\Delta _2,\lambda)\geq m$ iff $A\models\theta (\bar b)$. 
The formula $\theta$ doesn't really depend on $A$ but has quantifiers
ranging on $A$.

\end{itemize}
\end{fact}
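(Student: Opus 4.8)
The plan is to prove the three clauses in order, each by induction on the rank, handling the even and odd successor stages separately according to Definition \ref{R}.

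For (i): monotonicity. First I would observe that all three monotonicity statements are proved simultaneously by a straightforward induction on $\alpha$ in the assertion ``$R_A(p,\Delta_1,\Delta_2,\lambda)\geq\alpha$''. The limit case is immediate from clause (ii) of Definition \ref{R}. For the successor case with $\beta$ even (clause (iii)): enlarging $\Delta_1$ only enlarges the pool of admissible $\Delta_1$-types $r_i$, so the witnessing configuration survives; shrinking $p$ to a subset $p'\subseteq p$ is harmless since the witnessing finite $q\subseteq p$ may already lie in $p'$ — but here one must be slightly careful, because ``$q\subseteq p$'' ranges over \emph{all} finite subsets, so decreasing $p$ removes some $q$'s we must handle; the point is that the condition is ``for all finite $q\subseteq p$'', so a smaller $p$ imposes \emph{fewer} demands, giving the decreasing direction. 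Shrinking $\Delta_2$ or $\lambda$ (equivalently, decreasing $\mu<\lambda$) again only weakens the requirement. The odd successor case (clause (iv)) is analogous: the $\psi_i\in\Delta_2$ quantifier is universal, so shrinking $\Delta_2$ weakens it, and the inductive hypothesis on the $r_i$'s does the rest.

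For (ii): finite character. This is the usual argument. One direction is immediate from (i) (monotonicity in $p$): any finite $q\subseteq p$ has $R_A(q,\dots)\geq R_A(p,\dots)$. For the reverse, I would argue that if $R_A(q,\dots)\geq R_A(p,\dots)$ failed for every finite $q$, then for each such $q$ pick $\alpha_q = R_A(q,\dots) < \alpha_0:=R_A(p,\dots)$ — wait, rather: one shows by induction on $\alpha$ that $R_A(p,\dots)\geq\alpha$ implies $R_A(q,\dots)\geq\alpha$ for \emph{some} finite $q\subseteq p$, using that the witnessing configurations in clauses (iii),(iv) each refer only to a \emph{finite} $q\subseteq p$ together with finitely many further formulas, and applying the inductive hypothesis to the finitely many types of the form $q\cup r_i$ (or $q\cup\{\dots\}$) to extract finite subtypes, then unioning these finitely many finite sets. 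Since the rank of $p$ is an ordinal (or $\infty$, in which case one takes $\alpha$ past $|2^{|T|+|A|}|^+$ and notes the configuration stabilizes), taking $\alpha = R_A(p,\dots)+1$ (if an ordinal) or using the standard bound on the rank yields the witnessing finite $q$ with equal rank.

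For (iii): definability of the rank. I would fix finite $A,\Delta_1,\Delta_2$, finite $\lambda$ (so only finitely many $\mu<\lambda$), and $m\in\omega$, and construct $\theta$ by induction on $m$, uniformly in $\psi(\bar x,\bar y)$. For $m=0$: $R_A(\psi(\bar x,\bar b),\dots)\geq 0$ iff $\psi(\bar x,\bar b)$ is consistent, i.e.\ $\mathcal{C}\models\exists\bar x\,\psi(\bar x,\bar b)$, a formula (without $A$-quantifiers). For the successor step $m\to m+1$ with $m$ even: by (ii) we only ever quote finite subtypes, and the data in clause (iii) — the $r_i$'s — are $\Delta_1$-types over the \emph{finite} set $A$, hence each $r_i$ is a conjunction of boundedly many formulas $\pm\psi'(\bar x,\bar c)$ with $\psi'\in\Delta_1$, $\bar c\subseteq A$; there are only finitely many such $r_i$ up to equivalence. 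So ``there exist $r_0,\dots,r_\mu$ pairwise explicitly contradictory with each $R_A(q\cup r_i,\dots)\geq m$'' becomes a finite disjunction over the finitely many choices of tuples of $r_i$'s, and ``$R_A(q\cup r_i,\dots)\geq m$'' is, by the inductive hypothesis applied to the formula describing $q\cup r_i$, expressible by a formula $\theta_i(\bar b)$ whose quantifiers range over $A$. Conjoining/disjoining finitely many such $\theta_i$ and prefixing the (bounded) existential quantifiers over the parameters $\bar c\subseteq A$ appearing in the $r_i$'s yields $\theta(\bar b)$ with quantifiers ranging over $A$. The odd successor step is similar but uses the functions $\Psi_{\psi_i}$ from Fact \ref{10}: the universal demand over $\psi_i\in\Delta_2$ and $\bar d_i\in A$ is a finite conjunction (finite $\Delta_2$, finite $A$), and the existential ``there are $\bar b_i\in A\cap P$'' becomes bounded existential quantifiers over $A$, after which one applies the inductive hypothesis to the formula defining $r_i=q(\bar x)\cup\{(\forall\bar z\subseteq P)[\psi_i(\bar x,\bar d_i,\bar z)\equiv\Psi_{\psi_i}(\bar z,\bar b_i)]\}$. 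The remark that $\theta$ ``doesn't really depend on $A$ but has quantifiers ranging on $A$'' is then visible from the construction: the only role of $A$ is as the range of those quantifiers and as the source of parameters, and the finite combinatorial recipe is uniform.

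The main obstacle is clause (iii): one must be scrupulous that at every stage only \emph{finitely many} objects are quantified over (finitely many $r_i$ up to equivalence, finitely many parameter tuples from the \emph{finite} $A$, finitely many $\mu<\lambda$ and $\psi_i\in\Delta_2$), so that the inductive construction of $\theta$ stays within first-order logic — this is exactly where the finiteness hypotheses on $A,\Delta_1,\Delta_2,\lambda$ are used, and it is the reason clause (ii) is invoked (to replace arbitrary $p$ by finite $q$) and the reason the $\Delta_1$-types $r_i$ are required in Definition \ref{R}(iii) rather than arbitrary types. Interleaving this with the fact that equality of rank (as opposed to $\geq m$) is then also definable, if needed, is routine given the bound on the rank.
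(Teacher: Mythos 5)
The paper states this as a \emph{Fact} with no proof, so there is nothing to compare against; I assess your argument on its own.

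Clauses (i) and (iii) are essentially right. In (i) you correctly identify that the successor clauses (iii), (iv) of Definition~\ref{R} impose demands that are universal over $\mu<\lambda$, over finite $q\subseteq p$, and (in the odd step) over $\psi_i\in\Delta_2$, so that enlarging $\Delta_1$ (more candidate $r_i$'s) increases the rank while enlarging $p$, $\Delta_2$, or $\lambda$ (more demands) decreases it. In (iii) the key observations --- that for finite $A$, $\Delta_1$ there are only finitely many $\Delta_1$-types over $A$ up to equivalence, that finite $\lambda$ and finite $\Delta_2$ make all the quantifications in (iii), (iv) finitary, and that the $\bar b_i\in A\cap P$ clause becomes a bounded existential over $A$ --- are exactly what is needed, together with the trick of absorbing $q\cup r_i$ into a single formula $\psi'(\bar x,\bar b\bar c)$ so the induction on $m$ closes.

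Clause (ii) is where the argument goes wrong. You propose to show, by induction on $\alpha$, that $R_A(p,\dots)\geq\alpha$ implies $R_A(q,\dots)\geq\alpha$ for some finite $q\subseteq p$. But this implication is an immediate (weakened) consequence of your own (i): for \emph{every} finite $q\subseteq p$ one has $R_A(q,\dots)\geq R_A(p,\dots)$ by monotonicity in $p$. It does nothing toward (ii), and indeed when you then ``take $\alpha=R_A(p,\dots)+1$'' the antecedent $R_A(p)\geq R_A(p)+1$ is false, so the implication is vacuous. What (ii) actually requires, given (i), is a finite $q$ with $R_A(q,\dots)\leq R_A(p,\dots)$, and the lemma you need is the \emph{converse} of what you wrote: if $R_A(q,\dots)\geq\alpha$ for all finite $q\subseteq p$, then $R_A(p,\dots)\geq\alpha$. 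This is proved by a short induction on $\alpha$: the case $\alpha=0$ is compactness; limits are immediate; and at a successor $\alpha=\beta+1$, to verify the clause for a given $\mu<\lambda$ and finite $q\subseteq p$, one simply invokes the hypothesis $R_A(q)\geq\beta+1$ and reads off the required $r_i$ from the definition applied to $q$ (with $q'=q$), no union of finite subtypes needed. Taking $\alpha=R_A(p,\dots)+1$ (or handling $R_A(p)=\infty$ trivially, since then every finite $q$ already has $R_A(q)=\infty$) and contraposing yields a finite $q$ with $R_A(q)<R_A(p)+1$, hence $R_A(q)=R_A(p)$. Your ``extract and union finite subtypes'' step does not correspond to any step in this argument and, as written, your inductive claim has the direction of the implication reversed.
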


\red{
\begin{fact}\label{rankeven}
	Let $A$ be complete, $p\in S_*(A)$, $q^* \subseteq p$, and assume
	 $$R^n_A(q^*,\Delta _1,\Delta _2,\lambda)=R^n_A(p,\Delta _1,\Delta _2,\lambda)=k < \infty$$ Then $k$ is even.
\end{fact}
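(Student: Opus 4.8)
The plan is to argue by contradiction: suppose $k$ is odd. The key observation is that the rank drops at successor stages according to the parity of the ordinal we are descending from, and the odd-to-even step (clause (iv) of Definition \ref{R}) involves pinning down $\psi$-types over $P$ via the defining schema $\Psi_\psi$. Since $p \in S_*(A)$, realizing $p$ does not enlarge the $P$-part, so every $\psi$-type over $P^{\mathcal C}$ realized together with a realization of $p$ is already definable over $A \cap P$ — in fact by the canonical $\Psi_\psi(\bar z, \bar b)$ with $\bar b \in A \cap P$ guaranteed by Fact \ref{10}. This means the conditions demanded in clause (iv) are automatically satisfied by any realization of $p$: for the given $\psi_i \in \Delta_2$ and $\bar d_i \in A$, the parameters $\bar b_i \in A \cap P$ witnessing $(\forall \bar z \subseteq P)[\psi_i(\bar x, \bar d_i, \bar z) \equiv \Psi_{\psi_i}(\bar z, \bar b_i)]$ exist, and adjoining these formulas to a finite $q^* \subseteq p$ does not actually constrain the realizations beyond $p$ itself.

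First I would make precise the claim that if $r = q^* \cup \{(\forall \bar z \subseteq P)[\psi_i(\bar x, \bar d_i, \bar z) \equiv \Psi_{\psi_i}(\bar z, \bar b_i)] : i < \mu\}$ with the $\bar b_i$ chosen correctly, then $R_A(r, \Delta_1, \Delta_2, \lambda) = R_A(q^*, \Delta_1, \Delta_2, \lambda)$. For this, one shows $r$ and $q^*$ are realized by the same tuples in any sufficiently saturated model (using completeness of $A$ and $p \in S_*(A)$, together with Fact \ref{7}(iii) and Fact \ref{10}), hence have the same rank by monotonicity (Fact \ref{rank}(i)) in both directions. Given this, if $k = R_A(p, \ldots)$ were odd, then $R_A(p, \ldots) = k$ means $R_A(p, \ldots) \geq k$ but not $\geq k+1$; yet clause (iv) for $\alpha = k + 1$ (with $\beta = k$ odd) is satisfied precisely because we can always find the required $\bar b_i \in A \cap P$ and the resulting $r_i$ has rank $\geq k$ (being rank-equivalent to a finite subtype of $p$, which has rank $\geq k$ by Fact \ref{rank}(ii) applied to $R_A(p,\ldots) = k$, possibly shrinking $q^*$ further and invoking $R_A(q^*, \ldots) = k$ from the hypothesis). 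So $R_A(p, \ldots) \geq k + 1$, a contradiction.

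The main obstacle I anticipate is bookkeeping around the finite subtypes and the even/odd alternation: one must be careful that the hypothesis $R_A^n(q^*, \ldots) = R_A^n(p, \ldots) = k$ is used to guarantee that the $r_i$ appearing in clause (iv) genuinely retain rank $\geq k$ — the point is that by Fact \ref{rank}(ii) there is a single finite $q \subseteq p$ computing the rank, and we may assume $q \subseteq q^*$ (or enlarge $q^*$ without changing its rank, which is legitimate since adding formulas from $p$ cannot lower the rank below $k$ while it is already $\leq k$). A secondary subtlety is ensuring the $\Psi_{\psi_i}$ used in clause (iv) are exactly those of Fact \ref{10} relative to $A$, so that the witnesses $\bar b_i$ land in $A \cap P$; this is where completeness of $A$ and the definability hypothesis (Hypothesis \ref{asm:1}(2)) are essential, and it is what makes the odd-successor step ``free'' for types in $S_*(A)$, forcing the rank of such a type to be even.
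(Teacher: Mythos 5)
Your proposal takes essentially the same route as the paper: pick a realization $\bar c$ of $p$, use completeness of $A\cup\bar c$ (from $p\in S_*(A)$) and Fact~\ref{10} to find $\bar b_i\in A\cap P$ so that $\theta_i(\bar x):=(\forall\bar z\subseteq P)[\psi_i(\bar x,\bar d_i,\bar z)\equiv\Psi_{\psi_i}(\bar z,\bar b_i)]$ lies in $p$, hence $q\cup\{\theta_i:i<\mu\}\subseteq p$ has rank $\geq R(p)=k$ by monotonicity, verifying clause (iv) at $\alpha=k+1$ and yielding the contradiction. One caveat: your intermediate claim that $r$ and $q^*$ are ``realized by the same tuples'' is false as stated (a realization of $q^*$ need not satisfy the $\theta_i$, which pin down a specific $\psi_i$-definition over $P$), but it is also unnecessary --- the only fact you need, and which you also state, is that $r$ is a finite subtype of $p$, so $R_A(r,\Delta_1,\Delta_2,\lambda)\geq R_A(p,\Delta_1,\Delta_2,\lambda)=k$ by Fact~\ref{rank}(i).
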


\begin{proof}
	Assume $k$ is odd; we shall show that $R^n_A(q^*,\Delta _1,\Delta _2,\lambda)\ge k$ implies $R^n_A(q^*,\Delta _1,\Delta _2,\lambda)\ge k+1$. 
	
	 As in Definition \ref{R}(iv), let  
$\mu<\lambda$,  $q(\bar x)\subseteq q^*(\bar x)$ finite, $\psi
_i\in \Delta _2, \bar d_i\in A$ ($i\leq \mu$).

Let $\c \models p$; so 
$A \cup \set{\c}$ is complete by the assumption $p \in S_*(A)$. 

As $\bar d_i \in A$, clearly $A\cap P=(A\cup \bar c \bar d_i)\cap P$,
hence $tp(\bar c\bar d_i/A)\in S_*(A)$.
Hence $tp_{\psi_i}(\bar c\bar d_i/P^{\cal C})$ is defined by 
$\Psi _{\psi _i}(\bar y,\bar b_i)$ with $\bar b_i\subseteq P\cap(A\cup \bar
c\bar d_i)=A\cap P$, where $\Psi_{\psi _i}$ is as in Fact \ref{10}.

So $\theta_i(\bar x):=(\forall\bar y\subseteq P)
[\psi_i(\bar x,\bar d_i,\bar y)\equiv\Psi_{\psi_i}(\bar y,\bar b_i)]$
belongs to $tp(\bar c/A)=p$; hence 

$R_A(q\cup\{\theta_i(\bar
x)\},\Delta _1,\Delta _2,\lam)\geq R_A(p,\Delta _1,\Delta _2,\lam)=k$.

Now by (iv) of Definition \ref{R}, $R_A(p,\Delta _1,\Delta _2,\lam)\geq k+1$), and we are done. 

\end{proof}

}

\begin{theorem}\label{13}
The following are equivalent:
\begin{itemize}
\item[(i)] $A$ is stable.
\item[(ii)] For every finite $\Delta _1$ and finite $n$ there are some
finite $\Delta _2$ and finite $m$ such that 
$R^n_A(\bar x=\bar x,\Delta _1,\Delta _2,2)\leq m$.
\end{itemize}
\end{theorem}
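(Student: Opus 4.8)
The equivalence should be proved as two implications, both fairly standard once one unwinds the bookkeeping built into the rank. The non-trivial direction is $\neg(ii) \Rightarrow \neg(i)$, i.e. if for some finite $\Delta_1, n$ and \emph{every} choice of finite $\Delta_2, m$ we have $R^n_A(\bar x = \bar x, \Delta_1, \Delta_2, 2) > m$, then $A$ is not stable. So suppose $R^n_A(\bar x = \bar x, \Delta_1, \Delta_2, 2) = \infty$ for all finite $\Delta_2$; by Fact \ref{rank}(i), monotonicity, this even gives $R^n_A(\bar x = \bar x, \Delta_1, \Delta_2, 2) = \infty$ with $\Delta_2 = L(T)$. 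The idea is to build a binary tree $\langle p_\eta : \eta \in {}^{<\omega}2\rangle$ of $\Delta_1$-types over (an equivalent copy of) $A$, with $p_{\eta^\frown 0}, p_{\eta^\frown 1}$ explicitly contradictory, such that along the way the ``$\Psi$-clauses'' from clause (iv) are added so as to force the branches into $S_*$. Concretely I would alternate: at even stages use clause (iii) to split (the $\lambda=2$ case of (iii) gives exactly a binary splitting into two explicitly contradictory $\Delta_1$-types each of rank $\geq \beta$); at odd stages use clause (iv) with $\mu=1$ to throw in, for a bookkept formula $\psi_i(\bar x, \bar d_i, \bar z)$, the clause $(\forall \bar z \subseteq P)[\psi_i(\bar x, \bar d_i, \bar z) \equiv \Psi_{\psi_i}(\bar z, \bar b_i)]$ with $\bar b_i \in A \cap P$, while preserving rank $\geq \beta$. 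Since the rank is $\infty$, we can descend through $\omega$ many steps without the rank hitting $0$. Taking the $2^{\aleph_0}$ branches, compactness yields realizations $\bar c_\eta$; the contradictory $\Delta_1$-clauses show distinct branches give distinct types, and the accumulated $\Psi$-clauses (which exhaust all pairs $(\psi, \bar d)$ with $\bar d$ in the relevant countable set) guarantee, via Remark \ref{8.5} / Fact \ref{complete}(iii), that each $tp(\bar c_\eta/A') \in S_*(A')$. Then $|S_*(A')| \geq 2^{\aleph_0} > |A'|^{|T|}$ for a suitably small $A'$ with $A' \equiv A$ — one works inside a countable-enough $A'$, or uses the standard trick of coding $\lambda^+$ many branches over a model of size $\lambda$ by redoing the tree construction of height $\geq \lambda$ using that the rank is $\infty$ (not merely large), so that stability fails in the required strong sense $|S_*(A')| > |A'|^{|T|}$.

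**The other direction, $(ii) \Rightarrow (i)$.** Assume that for each finite $\Delta_1, n$ there are finite $\Delta_2, m$ with $R^n_A(\bar x = \bar x, \Delta_1, \Delta_2, 2) \leq m$. I would show $|S_*(A')| \leq |A'|^{|T|}$ for every $A' \equiv A$. Fix such an $A'$. A type $p \in S_*(A')$ is determined by the family $\langle p \restriction \Delta_1 : \Delta_1 \text{ finite}\rangle$ of its $\Delta_1$-restrictions (quantifier elimination, Hypothesis 1), so it suffices to bound the number of $\Delta_1$-types that occur as restrictions of members of $S_*(A')$ by $|A'|^{|T|}$ for each finite $\Delta_1$, and then take the product over the $\leq |T|$ many finite $\Delta_1$'s. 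Now run the usual ``rank decreases along a splitting'' argument: by Fact \ref{rankeven}, a type $p \in S_*(A')$ has even rank $R_A(p, \Delta_1, \Delta_2, 2)$, which by Fact \ref{rank}(ii) is witnessed by a finite $q \subseteq p$; the point of the \emph{even} clause (iii) with $\lambda = 2$ is that if $q$ has $\Delta_1$-rank $\beta$ (even), then over any given parameter set there are at most ... well, the key combinatorial fact is that two members of $S_*(A')$ with the same witnessing finite $q$ and with $R_A$-rank exactly equal to $R_A(q)$ must agree on $\Delta_1$ (this is essentially the standard fact that a type of maximal rank extending $q$ is unique as a $\Delta_1$-type, because any genuine $\Delta_1$-split or genuine alternative $\Psi$-clause would drop the rank, using clauses (iii) and (iv) respectively). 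Hence the $\Delta_1$-restriction of $p$ is determined by the finite data $(q, R_A(q, \Delta_1, \Delta_2, 2))$ together with the $\Psi$-clauses, i.e. by a choice of finitely many formulas with parameters in $A'$ and finitely many parameters $\bar b \in A' \cap P$ from clause (iv); there are at most $|A'|^{|T|} = |A'|^{<\aleph_0} \cdot |T|$-many such, giving the bound.

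**Main obstacle.** The conceptual core — and where I expect to spend the real effort — is the precise statement and proof of the uniqueness lemma underlying direction $(ii)\Rightarrow(i)$: ``among extensions of a fixed finite $q$ lying in $S_*(A')$, those of maximal $R_A(\cdot,\Delta_1,\Delta_2,2)$-rank have a unique $\Delta_1$-part, determined by finitely much data''. Getting this right requires a careful interaction of the even clause (iii) (which controls $\Delta_1$-splitting) with the odd clause (iv) (which controls the $\Psi_\psi$-definitions, i.e. the ``orthogonality to $P$'' part), and it is exactly here that Fact \ref{rankeven} and the completeness of $A \cup \{\bar c\}$ for $\bar c \models p$ are used. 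On the non-structure side, the obstacle is more technical: to get genuine failure of stability ($|S_*(A')| > |A'|^{|T|}$, not just $\geq |A'|^+$) one must build the splitting/definition tree of height $\geq |A'|$ rather than just $\omega$, which is where the distinction between ``rank $\geq m$ for all finite $m$'' and ``rank $= \infty$'' matters, and where one invokes the set-theoretic hypothesis on the existence of saturated models to have a sufficiently homogeneous $A'$ to carry out the construction. I would structure the write-up so that the tree construction is done once, at height $\alpha$ for arbitrary $\alpha$, and then specialized.
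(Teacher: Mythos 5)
Your proof of (ii)$\Rightarrow$(i) is essentially the paper's argument, reorganized from a proof by contradiction into a direct count. The ingredients — Fact~\ref{rank}(ii) to find a finite witness $q$ with $R(q)=R(p)$, Fact~\ref{rankeven} for evenness, and then the observation that two types in $S_*(A')$ sharing $q$ and having rank equal to $R(q)$ cannot disagree on $\Delta_1$ (else clause (iii) pushes the rank up by one) — are all present and all correct. One small over-complication: once you have this uniqueness lemma, $p\restriction\psi$ is determined by the witnessing finite $q$ alone, for the $\Delta_2$ supplied by (ii); you do not need to track the $\Psi$-clauses as additional determining data, since their role is entirely absorbed into Fact~\ref{rankeven}. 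The count $\prod_{\psi\in L}|A'| = |A'|^{|T|}$ then closes the argument.

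The non-structure direction $\neg(\text{ii})\Rightarrow\neg(\text{i})$ has a genuine gap, and the worry you flag is not the real one. You write that ``the distinction between rank $\geq m$ for all finite $m$ and rank $=\infty$ matters,'' but it does not: rank $\geq\omega$ for every finite $\Delta_2$ (the paper's notion of a \emph{large} type) is exactly what persists under directed unions, because by Fact~\ref{rank}(ii) the rank of a type equals the minimum rank of its finite subtypes. The actual obstacle is at the successor steps of your tree. Clause (iii) only tells you that, for each finite $m$, there is \emph{some} split $r_0^{(m)},r_1^{(m)}$ of rank $\geq m$ — and these splits may depend on $m$ (and on $\Delta_2$). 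To keep the branches large you need a \emph{single} formula $\pm\psi(\bar x,\bar b)$ whose addition preserves rank $\geq\omega$ for all finite $\Delta_2$ simultaneously; similarly, in the step that adds a $\Psi$-clause via clause (iv), you need one parameter $\bar d\subseteq P\cap B$ that works for all $(\Delta_2,n,\theta)$ at once. This is precisely the content of the paper's claims (a)–(c), and it is obtained by passing to a saturated $B\equiv A$ of size $\lambda=\lambda^{<\lambda}$ and realizing in $B$ the type of requirements $*_\rho$ over $P\cap B$, using the definability from Fact~\ref{10} to recognize that each $*_\rho$ is a first-order condition on $\bar d$. Your sketch skips this, and without it the branches need not be large and hence need not extend to members of $S_*(B)$. (The variant of working over a countable $A'$ to get $2^{\aleph_0}$ types fails outright, as you half-suspect, since $\aleph_0^{|T|}\geq 2^{\aleph_0}$.)
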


\begin{proof}
$(ii)\Rightarrow (i)$: Suppose $(ii)$ holds. Since condition
(ii) speaks only about $Th({\cal C}|_A)$, it suffices to prove
$|S_*(A)|\leq |A|^{|T|}$ as the same proof works for every
$A^\prime\equiv A$. 

Let $\lambda :=|A|^{|T|}$ and assume that for $i<\lambda ^+$ there are
distinct types $p_i=tp(\bar c_i/A)\in S_*(A)$. By Fact \ref{rank} (ii)
for every $i<\lambda ^+$ and finite $\Delta _1,\Delta _2$ we can find
a finite $p=p_{i,\Delta _1,\Delta _2}\subseteq p_i$ such that
$R_A(p,\Delta _1,\Delta _2,2)=R_A(p_i,\Delta _1,\Delta _2,2)$.
Let $q_i=\bigcup _{\Delta _1,\Delta _2}p_{i,\Delta _1,\Delta _2}$.
So $q_i\subseteq p_i,|q_i|\leq |T|$ and by Fact \ref{rank} (i) 
$R_A(p_i,\Delta _1,\Delta _2,2)=R_A(q_i,\Delta _1,\Delta _2,2)$
for every finite $\Delta _1,\Delta _2$.

The function $F$ with $dom F=\lambda ^+, F(i)=\langle p_i|_\psi
:\psi\in L(T)\rangle $ is one-to-one where
$p|_\psi=\{\pm\psi(\bar x,\bar y):\pm\psi(\bar
x,\bar z)\in p\}$.
Hence, $\lambda ^+\leq \prod_{\psi\in L(T)} |\{p_i| \psi:
i<\lambda ^+\}|$.
If for every $\psi$, $|\{p _i|\psi :i>\lambda
^+\}|\leq\lambda$, we get a contradiction as $\lambda ^{|T|}=\lambda$.
Choose $\psi ^*$ such that $|\{p_i| \psi ^* :i<\lambda
^+\}|=\lambda ^+$. By renaming we can assume that $\{p_i|\psi
^*:i<\lambda ^+\}$ are pairwise distinct.

There cannot be more than $(|A|+|T|)^{|T|}=|A|^{|T|}=\lambda$ different
$q_i$, so without loss of generality, $q_i=q^*$ for all $i$. Also we can
assume that all $p_i$'s are $n$-types for some fixed $n$. Applying
condition (ii) to $n$ and $\Delta _1:=\{\psi ^*\}$ there is a finite set
$\Delta _2$ and $m<\omega$ such that $R^n_A(\bar x=\bar x,\Delta
_1,\Delta _2,2)\leq m$.

Let $k:=R^n_A(q^*,\Delta _1,\Delta _2,2)$. Since $q^*$ is consistent and
by monotonicity (Fact \ref{rank} (i)) and the fact that
$\{\bar x=\bar x\}\subseteq q^*$, it
follows that $0\leq k\leq m<\omega$.

\red{Recall also that by the construction of $q^*$,  for all $i$ \[ R^n_A(p_i,\Delta _1,\Delta _2,2) = R^n_A(q^*,\Delta _1,\Delta _2,2) = k \]

We are now going to show that $R^n_A(q^*,\Delta _1,\Delta _2,2) \ge k+1$, hence obtaining a contradiction. } Note that by \ref{rankeven}, $k$ is even. 

From now on, for simplicity of notation, let $R$ denote $R^n_A$. 


As $p_0|\psi ^*\not=p_1|\psi^*$, there is $\psi ^*(\bar
x,\bar b), \bar b\in A$ such that $\psi ^*(\bar x,\bar b)\in p_0$ and
$\neg\psi^*(\bar x,\bar b)\in p_1$ (or conversely). Now
$R(q^*\cup\{\pm\psi ^*(\bar x,\bar b)\},\Delta _1,\Delta _2,2)\geq
k$ as this type is contained in $p_0$ or $p_1$, hence by monotonicity
$R(q^*,\Delta _1,\Delta _2,2)=R(q^*\cup\{\pm\psi ^*(\bar x,\bar
b)\},\Delta _1,\Delta _2,2)$. So $R(q^*,\Delta _1,\Delta _2,2)>k$, a
contradiction.

This finishes the proof of one direction. 



%
%
%

In order to prove the other direction, assume that condition (ii) fails.
We will prove a
strong version of $\neg (i)$. Let $(ii)$ fail through
$\Delta _1$. So for all finite $\Delta _2$,
$R(\bar x=\bar x,\Delta _1,\Delta
_2,2)\geq \omega$.

Let $\lambda=\lambda^{<\lambda}>|T|$ (which exists by our set theoretic
assumption).
Let $B\equiv A$ be saturated, $|B|=\lambda$.

An $m$-type $p(\bar x)$ over $B$ (in $L(T)$) is called {\it large}
(for $\Delta _1$) if
for all finite $\Delta _2$, $R(p(\bar x),\Delta _1,\Delta _2,2)\geq
\omega$.

So $\neg (ii)$ says that $\bar x=\bar x$ is large for $\Delta _1$.
It will be enough to prove the following claims:

Assume $p(\bar x)$ is over $B$ and is large,
$|p|<\lambda$. Then the following holds:
\begin{itemize}
\item[(a)] For any $\bar b\in B,\psi =\psi (\bar x,\bar y,\bar z)$ there exists
$\bar d\subseteq P\cap B$ such that $p(\bar x)\cup\{\forall
\bar z\subseteq
P[\psi (\bar x,\bar b,\bar z)\equiv\Psi_\psi (\bar z,\bar d)]\}$ is
large.
\item[(b)] For $\bar b\subseteq B$
at least one of $p(\bar x)\cup\{\pm\psi(\bar x,\bar b)\}$ is large.
\item[(c)] For some $\psi(\bar x,\bar y)\in \Delta _1$ and $\bar b\in B$
we have $p(\bar x)\cup\{\pm\psi(\bar x,\bar b)\}$ are large.
\end{itemize}

Note that from $(a)-(c)$ \red{(and Fact \ref{7}(ii))} it follows that $|S_*(B)|=2^\lambda$; in fact,
even $|\{p|_{\Delta _1}:p\in S_*(B)\}|=2^\lambda$. 

\red{This already contradicts stability. But we can say more. } For at least one
$p\in S_*(B)$, $p|_{\Delta _1}$ is not definable. Hence (by [Sh8]) there
exists some $B$, $B\equiv A$, $|B|=\lambda$ such that $|S_*(B)|\geq
Ded(\lambda)$, assuming for simplicity that $Ded(\lambda )$ is obtained.
This is a strong negation of $(i)$.

It is left to show $(a)-(c)$:

$(a)$: Without loss of generality assume that $p(\bar x)$ is closed
under conjunction. So we have to find $\bar d$ such that for all
$\rho =\langle \Delta _2,n,\theta(\bar x,\bar e)\rangle$ where $n<\omega ,
\theta (\bar x,\bar e)\in p(\bar x)$, and $\Delta _2\subseteq L(T)$
finite,

$$ (*_\rho) \ \  R(\theta (\bar x,\bar
e))\wedge (\forall \bar z\subseteq P)(\psi (\bar x,\bar b,\bar
z)\equiv\Psi_\psi (\bar z,\bar d)),\Delta _1,\Delta _2,2)\geq n.$$

For every such $\rho$ there are $\bar e^*_\rho\subseteq B,\chi _\rho\in L(T$
such that for $\bar d\subseteq P\cap B, B\models \chi_\rho(\bar d,\bar
e^*_\rho)$ iff $*_\rho$ holds for $\bar d$.

As $B$ is $\lambda$-saturated, $|p(\bar x)|<\lambda$ it suffices to show
that for every relevant $\rho _1,\dots ,\rho _n$ there is $\bar
d\subseteq P\cap B$ satisfying  $*_{\rho _1}\wedge \dots \wedge *_{\rho
_n}$.

By monotonicity properties of rank and the fact that $p$ is closed under
conjunction, it is enough to consider one $\rho$. But $R(\theta (\bar
x,\bar e),\Delta _1,\Delta _2,2)=\omega >n+2$. So by the definition of
rank there is suitable $\bar d\subseteq P\cap B$ satisfying $*_\rho$.

$(b)$ follows from $(a)$ with $\bar z$ empty.

For $(c)$ assume first that $\Delta _1=\{\psi \}$. Repeat the proof of
$(a)$ conjuncting over $\pm\psi$, using the other clause in the
definition of rank.

If $|\Delta _1|>1$, assume $(c)$ doesn't hold. So for every
$\psi\in\Delta _1$ there is some finite $q_\psi\subseteq p(\bar x)$ such
that stops $p(\bar x)\cup\{\pm\psi (\bar x,\bar b\}$ from being large.
Now use $R(\bigcup_{\psi\in \Delta _1}q_\psi,\Delta _1,\Delta _2,2)\geq
n+2$ to get a contradiction.
\end{proof}

\red{

The following two corollaries follow from the proof of (the second direction of) Theorem \ref{13}.

\begin{co} 
	In Definition \ref{6}(iv), it is not necessary to consider all $A' \equiv A$. 
	More specifically, a complete set $A$ is stable if and only if $|S_*(A')| \le |A'|^{|T|}$ for some $A' \equiv A$ saturated, $|A'|>|T|$. 
\end{co}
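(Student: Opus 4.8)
The plan is to derive the equivalence from Theorem~\ref{13} together with a closer reading of the proof of its second direction, without proving anything from scratch. First note that, by our standing set-theoretic assumption, saturated $A'\equiv A$ with $|A'|>|T|$ do exist (take $|A'|=\lambda$ with $\lambda^{<\lambda}=\lambda>|T|$, realized inside $\mathcal C$ just as in the proof of Theorem~\ref{13}), so the right-hand side is not vacuous. The ``only if'' direction is then immediate: if $A$ is stable, then by Definition~\ref{6}(iv) we have $|S_*(A'')|\le|A''|^{|T|}$ for \emph{every} $A''\equiv A$, in particular for such an $A'$.

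For the ``if'' direction I would argue by contraposition: assume $A$ is not stable, so by Theorem~\ref{13} condition (ii) of that theorem fails, say through a finite $\Delta_1$ and finite $n$, i.e.\ (in the terminology of the proof of Theorem~\ref{13}) $\bar x=\bar x$ is large for $\Delta_1$. The claim I would establish is that then $|S_*(A')|=2^{|A'|}$ for \emph{every} saturated $A'\equiv A$ with $|A'|>|T|$. This is exactly what the second direction of the proof of Theorem~\ref{13} produces, once one notes that that argument is ``local'': the particular cardinal $\lambda$ with $\lambda^{<\lambda}=\lambda$ chosen there enters only (i) to guarantee the existence of the saturated model $B$, and (ii), at the very end, to pass from ``$p|_{\Delta_1}$ is undefinable for some $p\in S_*(B)$'' to the bound $Ded(\lambda)$ via [Sh8] --- and neither is needed here, since we are handed $A'$ and only want a lower bound on $|S_*(A')|$. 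Concretely, Claims (a)--(c) of that proof, and the ensuing construction of a binary tree $\langle p_\eta:\eta\in{}^{<|A'|}2\rangle$ of large types over $A'$ whose $2^{|A'|}$ branches yield pairwise distinct members of $S_*(A')$ (invoking Fact~\ref{7}(ii) at limit levels, exactly as there), go through verbatim over $A'$; the only properties of $A'$ actually used are its saturation --- which in Clause (a) lets one handle types over $A'$ of size $<|A'|$ --- and the definability of the rank (Fact~\ref{rank}(iii)), which depends only on $Th(\mathcal C|A')=Th(\mathcal C|A)$.

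It then remains to conclude by cardinal arithmetic: $|T|<|A'|$ gives $|A'|^{|T|}\le 2^{|A'|}$ always, and $|A'|^{|T|}<2^{|A'|}$ in the relevant case --- cleanly whenever $|A'|^{<|A'|}=|A'|$, which holds for the saturated sets supplied by our standing set-theoretic assumption (one can also observe that the failure of (ii) already forces $2^\kappa\le|A'|$ for every $\kappa<|A'|$, by realizing inside the saturated $A'$ the height-$\kappa$ truncations of the tree above, so $2^{<|A'|}\le|A'|$). Hence $|S_*(A')|=2^{|A'|}>|A'|^{|T|}$, contradicting the assumed inequality $|S_*(A')|\le|A'|^{|T|}$; so $A$ is stable after all. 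The step I expect to carry the real weight is the ``locality'' observation about the proof of Theorem~\ref{13} --- one must check carefully that the special cardinal there played no role in producing the $2^{|A'|}$ many types themselves, only in the auxiliary existence and definability claims; everything else is routine bookkeeping.
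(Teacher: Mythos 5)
Your proposal tracks the paper's intent exactly: the paper merely says the corollary ``follows from the proof of (the second direction of) Theorem~\ref{13}'', and you unpack precisely what that means --- namely that Claims (a)--(c) and the ensuing tree construction run over \emph{any} saturated $A'\equiv A$ with $|A'|>|T|$, since the special $\lambda=\lambda^{<\lambda}$ of the theorem's proof only entered to supply such a $B$ (and, at the very end, to reach the $Ded(\lambda)$ refinement via [Sh8], which the corollary doesn't need). Your identification of the ``locality'' of the argument, and your handling of the trivial ``only if'' direction, are exactly right.

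One caveat on the cardinal bookkeeping, which is the only place you go beyond routine: the tree construction gives $|S_*(A')|=2^{|A'|}$, and you then need $2^{|A'|}>|A'|^{|T|}$. You rightly observe that this is clean when $|A'|^{<|A'|}=|A'|$ (which the standing set-theoretic assumption supplies), but the parenthetical alternative --- ``realizing inside the saturated $A'$ the height-$\kappa$ truncations of the tree'' to deduce $2^{<|A'|}\le|A'|$ --- is not quite right as stated: the nodes $p_\eta$ are types of tuples $\bar x$ living in $\mathcal C$, not in $A'$, so their realizations aren't elements of $A'$ and saturation of $A'$ doesn't directly absorb them. What one actually has is $2^{|A'|}$ pairwise distinct $\Delta_1$-types over $A'$, which (since $\Delta_1$ is finite) forces $\Delta_1$ to be unstable in the sense of $T$; the fact that an unstable formula admits a saturated $A'$ of size $\kappa>|T|$ only when $\kappa^{<\kappa}=\kappa$ then closes the gap. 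That argument is standard but it is not the one you wrote. In practice, reading ``saturated, $|A'|>|T|$'' as shorthand for ``saturated of cardinality $\lambda=\lambda^{<\lambda}>|T|$'' --- which is what the proof of Theorem~\ref{13} actually produces and what the standing hypothesis guarantees exists --- makes the corollary and your proof of it unproblematic.
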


\begin{co}
	Let $A$ be complete unstable and saturated of cardinality $\lam>|T|$. Then $|
	S_*(A)| = 2^{\lam}$. In fact, $|S_{*,\Delta}(A)| = 2^\lam$, where $\Delta$ is 
	some finite set of formulas, and 
	\[S_{*,\Delta}(A) = \set{p\rest \Delta\colon p \in S_*(A)}\] 
\end{co}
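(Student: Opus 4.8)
The final statement to prove consists of the two corollaries that "follow from the proof of (the second direction of) Theorem \ref{13}".

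\medskip

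\noindent\textbf{Proof proposal.}

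The plan is to extract both corollaries from the contrapositive argument already carried out in the second direction of Theorem \ref{13}. Recall that in that argument we did \emph{not} start from the full definition of stability (quantifying over all $A' \equiv A$); instead we assumed that condition (ii) of the Theorem fails — that is, there is a finite $\Delta_1$ witnessing instability of the \emph{theory} $\mathrm{Th}(\mathcal{C}|_A)$ — and from this, working inside a \emph{single} saturated model $B \equiv A$ of cardinality $\lambda = \lambda^{<\lambda} > |T|$, we produced via claims (a)–(c) a tree of ``large'' types, yielding $|\{p\rest_{\Delta_1} : p \in S_*(B)\}| = 2^\lambda$. The key observation is that this whole construction only ever used the existence of \emph{one} suitable saturated $B$, and that the failure of (ii) is equivalent to instability by the Theorem itself.

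For the first corollary, the plan is: the ``only if'' direction is trivial (stability as defined in \ref{6}(iv) quantifies over \emph{all} $A' \equiv A$, in particular over any saturated $A'$ of cardinality $>|T|$, which exists by the set-theoretic hypothesis and has $|A'|^{<|A'|}=|A'|$). For the ``if'' direction, suppose $A$ is \emph{not} stable; then by Theorem \ref{13}, condition (ii) fails, through some finite $\Delta_1$ and $n$. Now take \emph{any} saturated $A' \equiv A$ with $|A'| = \lambda$ where $\lambda = \lambda^{<\lambda} > |T|$ (such $\lambda$ and $A'$ exist). Since (ii) is a statement about $\mathrm{Th}(\mathcal{C}|_A) = \mathrm{Th}(\mathcal{C}|_{A'})$, it also fails for $A'$ through $\Delta_1, n$; hence the claims (a)–(c) from the proof of Theorem \ref{13}, applied with $B := A'$, give $|S_*(A')| = 2^\lambda > \lambda = |A'|^{|T|}$. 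So no saturated $A' \equiv A$ of cardinality $>|T|$ witnesses $|S_*(A')| \le |A'|^{|T|}$; contrapositively, if some such $A'$ does satisfy the inequality, then $A$ is stable. This gives the corollary.

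For the second corollary, the plan is essentially to re-read the bound obtained in the proof of Theorem \ref{13}: there we showed not merely $|S_*(B)| = 2^\lambda$ but, more precisely, $|\{p\rest_{\Delta_1} : p \in S_*(B)\}| = 2^\lambda$, where $\Delta_1$ is the finite set through which (ii) fails. So if $A$ is complete, unstable, saturated of cardinality $\lambda > |T|$: by Theorem \ref{13}, (ii) fails for $A$ through some finite $\Delta_1$; we may also assume $\lambda = \lambda^{<\lambda}$ by, if necessary, first passing to a saturated $A' \equiv A$ of such cardinality — but actually, since the corollary as stated fixes $A$ of cardinality $\lambda > |T|$, the cleanest route is to invoke that for a saturated model of size $\lambda>|T|$ one has $\lambda^{<\lambda}=\lambda$ automatically, or else to note the argument of (a)–(c) only needs $\lambda$-saturation of $B$ together with $|p|<\lambda$ at each stage, which holds here. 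Then claims (a)–(c) applied to $B := A$ directly give $|S_{*,\Delta}(A)| = 2^\lambda$ with $\Delta := \Delta_1$, and a fortiori $|S_*(A)| = 2^\lambda$ (the upper bound $|S_*(A)| \le 2^\lambda$ being clear since there are at most $2^\lambda$ types over a set of size $\lambda$).

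The main obstacle I anticipate is a bookkeeping one rather than a conceptual one: making sure the tree-construction in (a)–(c) really only requires $\lambda$-saturation and the running hypothesis $|p| < \lambda$ — so that it applies verbatim to $B := A'$ in the first corollary and $B := A$ in the second — and checking that the cardinal arithmetic lines up, i.e. that the set-theoretic hypothesis guarantees a suitable $\lambda = \lambda^{<\lambda} > |T|$ and that saturated models of such cardinality are available and have the $P$-part compact as needed. Once those routine points are confirmed, both corollaries are immediate consequences of the already-established construction, since that construction at no point used the ``for all $A' \equiv A$'' quantifier — it only used one carefully chosen saturated model.
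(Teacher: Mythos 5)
Your proposal is correct and takes the same approach as the paper, which simply declares that this corollary follows from the proof of the second direction of Theorem \ref{13}; you correctly observe that claims (a)--(c) and the ensuing tree construction use only the $\lambda$-saturation of $B$ and the running bound $|p|<\lambda$, so they apply verbatim with $B := A$. One small caveat: your first suggested shortcut, that a saturated model of size $\lambda > |T|$ automatically satisfies $\lambda^{<\lambda} = \lambda$, is false in general (consider pure infinite sets), though it does hold here because instability of $A$ forces $Th(\mathcal{C}|_A)$ to be first-order unstable; your second route, re-reading the argument and noting it never uses $\lambda^{<\lambda}=\lambda$ beyond securing a saturated $B$, is cleaner and is the one to rely on.
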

%

}

\medskip

From now on, we will often omit the superscript and the subscript in the rank $R^n_A$, and write simply $R$ (at least when 
$n$ and $A$ are easily deduced from the context).

\medskip

In conclusion, we observe that every type $p \in S_*(A)$ over a stable set $A$ is internally definable (see Definition \ref{de:internallydef}).


\begin{co}\label{14} 

\begin{enumerate}
\item
If $A$ is stable, then for every $\psi (\bar
x,\bar y)\in L(T)$ there is $\Psi _\psi$ in $L(A)$ such that if $p\in
S_*(A)$, then for some $\bar b\subseteq A, \Psi _\psi (\bar
y,\bar b)$ defines $p| \psi$ in $\cC_A$. 

Specifically, for every $\c \in A$, $\psi(\x,\c) \in p$ if and only if $A \models \Psi _\psi (\bar
c,\bar b)$.
 \item Moreover, if $|A|\geq 2$, then for every $\psi(\x,\y)$, there is a definition $\Psi_\psi(\x,\y)$ as above which works uniformly for all $B \equiv A$ and $p \in S_*(B)$. 
 \end{enumerate}
\end{co}
\begin{proof}
\begin{enumerate}
\item
Let $\Delta _1=\{\psi\}$. Then there is some finite $\Delta _2$
such that $R_A(\bar x=\bar x,\Delta _1,\Delta _2,2)=n^*<\omega$. Let
$\theta (\bar x)\in p(\bar x)$ such that
$n=R_A(\theta(\bar x),\Delta _1,\Delta _2,2)=R_A(p(\bar x),\Delta _1,\Delta
_2,2)\leq n^*$.

Recall that $n$ is even (Fact \ref{rankeven}). 
Since $\Delta _1=\{\psi\}$, there is no $\bar
b\in A$ such that $R(\theta (\bar x)\wedge\pm\psi(\bar x,\bar b),\Delta
_1,\Delta _2,2)=n$.

But $\psi\in p(\bar x)\Rightarrow R(\theta (\bar x)\wedge \psi,\Delta _1,\Delta _2,2)=n$.

So for $\bar b\subseteq A$, we have
$$\psi (\bar x,\bar b)\in p(\bar x)\Leftrightarrow R(\theta (\bar
x)\wedge \psi(\bar x,\bar b),\Delta _1,\Delta _2,2)\geq n .$$

By Fact \ref{rank}$(iii)$ the right hand side is definable in $A$. More precisely, the predicate 

\[
\Psi(\y) = \left[ R_A(\theta (\bar
x)\wedge \psi(\bar x,\bar y),\Delta _1,\Delta _2,2)\geq n\right]
\]
 is definable in $A$; note that it may have parameters in $A$.
 
This proves that for each type $p\in S_*(A)$, there is some $\Psi _\psi$.

\item 
Now use a compactness argument to find a uniform definition $\Psi _\psi$ for all $B\equiv A$ 
and $p\in S_*(B)$. The argument is pretty standard, but we have chosen to include it due to the (unusual) additional requirement of uniformity for all $B$.

Let $\psi(\x,\y) \in L(T)$. Expand $L(T)$ with a new monadic predicate $Q(y)$, which is interpreted  in $\cC$ as the set $A$. Let $T^*$ be the theory of the expanded monster model. Consider the following set of formulae in the expanded language:

\[
	T^* \bigwedge 
\left\{
\forall \z \in Q  \exists \y \in Q \; \psi(\x,\y) \not\leftrightarrow [Q \models \Psi(\y,\z)]\colon \Psi(\y,\z)  \in L(T) \right\}
\]

Assume that this set is finitely satisfiable. Let $M^*$ be a model of $T^*$ in which the tuple $\a$ realize the above set, and let $M\models T$ be reduct of $M^*$ to $L(T)$. Denote $B = Q^{M^*}$. We now have (all formulas and types below are in the original language $L(T)$):

\begin{itemize}
\item
	$B \equiv A$
\item
	For all $\Psi(\y,\z)
\in L(T)$ and $\c \in B$,  $\Psi(\y,\c)$ does not internally (in $B$) define $\tp_\psi(\a/B)$. 
	
\end{itemize}

So for every $\psi(\x,\y)$ there are finitely many $\Psi_i(\y,\z)$ such that for any $B \equiv A$ and any $\a \in \cC$, the type $\tp_\psi(\a/B)$ is defined by $\Psi_i(\x,\b)$ for some $i$ and $\b \in B$. Now (since $|A|\ge 2$, hence so is any $B \equiv A$), we can combine these $\Psi_i$'s into one formula $\Psi(\x,\y)$ that works for all $B \equiv A$ and $\a \in \cC$, as required. 
\end{enumerate}

\end{proof}



So we have obtained a uniform (for all $B \equiv A$) notion of definability of types in $S_*(A)$ for a stable set $A$. Note, however, that what we got is not the ``usual" notion of definability of types; this is different than saying that $p$ is definable (in $\cC$). 
Specifically, unless $A\prec {\cal C}$, $\Psi _\psi$ might have quantifiers. In order to obtain quantifier free definitions, we will need to make yet another ``structure'' assumption; see Hypothesis \ref{asm:2}. 
%
%
%
%



\section{Stability and primary models}

In this section our goal is to obtain a characterization of stable sets that strengthens the characterization of complete sets with a saturated $P$-part (Proposition \ref{prp:complete_exnetdabletomodel}). Specifically, we will show that if $A$ is (also) stable, then, in addition, one can have the model $M$ in Proposition \ref{prp:complete_exnetdabletomodel} be ``constructible'' over $A$ in a nice way. 

First, we strengthen the Small Type Extension Lemma (Lemma \ref{le:typeextension}) in this context. 

\begin{lem}\label{16} 
\begin{enumerate}
\item

Assume $B$ is stable,  $|B| = |P^B| = \lambda$, $P^B$ is saturated.
Let  $p(\bar x)$ 
an $m$-type over $B$, $|p(\bar x)|<\lambda$, then there is
$q(\bar x)$ such that $|q(\bar x)|\leq |T|$,
$p(\bar x)\cup q(\bar x)$ consistent and there is $r\in S_*(B)$ such
that $p(\bar x)\cup q(\bar x)\equiv r(\bar x)$. 

In particular,  $r(\bar x)$ is
$\lambda$-isolated.
\item
The previous clause is also true if $\x$ is an infinite tuple with $<\lam$ variables, but in this case we can only 
require that  $|q|<\lam$.

Specifically, if $|\x| = \ka < \lam$, then there exists $|q| \le |T|\cdot\ka$ as above.

\end{enumerate}

\end{lem}

\begin{proof} 

\begin{enumerate}
\item 
Let $\{\psi _i(\bar x,\bar y_i):i<|T|\}$ list all
formulas of $L(T)$. Let $\Delta _i$ be finite such that $R(\bar x=\bar
x,\{\psi _i\},\Delta _i,2)<\omega$ (where $R = R^m_B$). Define $q_i(\bar x)$ by induction
on $i < |T|$ such that
\begin{itemize}
\item[(a)] $q_i$ is finite and is over $B$,
\item[(b)] $p(\bar x)\cup \bigcup _{j\leq i}q_j(\bar x)$ is
consistent, and
\item[(c)] $R(p\cup\bigcup _{j\leq i}q_j,\{\psi _i\},\Delta _i,2)$ is
minimal with respect to (a) and (b).
\end{itemize}


By Lemma \ref{9}, there is $p^* \in S_*(B)$ extending $p\cup\bigcup _{j\leq i}q_j$. Clearly, $R(p^*,
\{\psi _i\},\Delta _i,2) \le R(p\cup\bigcup _{j\leq i}q_j,
\{\psi _i\},\Delta _i,2)$, and for some finite $q' \subseteq p^*$ we have $R(p^*,
\{\psi _i\},\Delta _i,2) = R(q',
\{\psi _i\},\Delta _i,2)$. If $R(p^*,
\{\psi _i\},\Delta _i,2) < R(p\cup\bigcup _{j\leq i}q_j,
\{\psi _i\},\Delta _i,2)$, setting $q'_i = q_i \cup q'$ would contradict the ``minimality'' of $q_i$ (clause (c) above). Hence $R(p^*,
\{\psi _i\},\Delta _i,2) = R(p\cup\bigcup _{j\leq i}q_j,
\{\psi _i\},\Delta _i,2)$. 


By Fact \ref{rankeven}, $R(p^*,
\{\psi _i\},\Delta _i,2)$ is even, hence so is $R(p\cup\bigcup _{j\leq i}q_j,\{\psi _i\},\Delta _i,2)$.
In particular, we have: 

\begin{itemize}
\item[(d)]
For no $\bar b\subseteq B$ do we have $R(p\cup\bigcup _{j\leq
i}q_j\cup \{\pm\psi_i(\bar x,\bar b)\},\{\psi _i\},\Delta
_i,2)\geq R(p\cup\bigcup _{j\leq i}q_j,\{\psi _i\},\Delta _i,2)$
\end{itemize}

Clearly $q = |\bigcup _{j\leq |T|}q_j|\leq |T|$. By 
Lemma \ref{extension} there is some $r\in S_*(B)$
such that $p\cup\bigcup_{j<|T|}q_j\subseteq r$. By (c) and (d) above it follows that $p\cup\bigcup_{j<|T|}q_j\vdash r$.

\item 
Let $\seq{\psi_i(\x_i, \y_i)\colon i<\ka}$ list all the formulas where $\x_i$ is a finite tuple from $\x$ (so $|T| \le \ka < \lam$), and define $q_i$ on induction on $\ka$ just as in the proof of the previous clause. Since each $q_i$ is finite, $q = |\bigcup _{j\leq |\ka|}q_j|\leq \ka<\lam$, and we can again use Lemma   \ref{extension} in order to obtain $r\in S_*(B)$ as required.
\end{enumerate}

\end{proof}

\begin{pr}\label{pr:isoext}
Assume that $B$ is stable,  $|B| = |P^B| = \lambda$, $P^B$ is saturated. Let $C \supset B$ such that:
\begin{itemize}
\item $C$ is complete
\item $P^C = P^B$
\item $|C\setminus B| < \lam$
\item $\tp(C/B)$ is $\lam$-isolated

\end{itemize}

Let  $p(\bar x)$ 
an $m$-type over $C$,  $|p(\bar x)|<\lambda$. Then there is a $\lam$-isolated
$r\in S_*(C)$ extending $p$.
\end{pr}
\begin{proof}
 	Let $\c = \seq{c_i\colon i<\ka}$ list $C\setminus B$ (so $\ka<\lam$), and let $B_0 \subseteq B$ be such that $|B_0| < \lam$ and $\tp(\c/B_0) \equiv \tp(\c/B)$. 
	
	Separating the parameters of $p$, we can think of $p(\x)$ as the type $p(\x,\c)$ over $B\c$. By replacing all 
	occurrences of  $c_i$ by a new variable $y_i$, we therefore obtain a type $p(\x,\y)$ over $B$. 
	
	Denote $\hat p(\x,\y) = p(\x,\y) \cup \tp(\c/B_0)$. By Lemma \ref{16}(ii), there is $\hat r \in S^*(B)$ extending $\hat p$ which is $\lam$-isolated. Let $\hat{\a} \hat \c$ realize $\hat r$ in $\cC$. Note that $\hat \c \equiv_B \c$. Let $\a \in \cC$ such that $\a \c \equiv_B \hat{\a}\hat \c$. Note that it is still the case that $\a C \cap P = P^B$ and $\a C$ is complete, so in particular $\tp(\a/C) \in S_*(C)$.  
	
	Finally, clearly $\tp(\a/C)$ is $\lam$-isolated: indeed, if $B_1 \subseteq B$ is such that $\tp(\a \c/B_1) \vdash \tp(\a \c/B)$, then $\tp(\a/\c B_1) \vdash \tp(\a/\c B ) = \tp(\a/C)$, so we are done.
\end{proof}

\begin{de}
Let $N$ be a model, $P^N \subseteq B \subseteq N$.
\begin{enumerate}
\item We say that a model $N$ is \emph{$\lam$-prime} over a $B$ if $N$ is $\lam$-saturated, and it can be elementarily 
embedded over $B$ into any $\lam$-saturated model containing $B$.  

\item 

	We say that $N$ is \emph{$\lam$-atomic} over $B$ if 
for every $\bar d\subseteq N$, $tp(\bar d,B)$ is
$\lambda$-isolated over some $B_{\bar d}\subseteq B, |B_{\bar
d}|<\lambda$.
\item
	We say that the sequence $\d = \{d_i:i<\alpha\} \subseteq N$ is a $\lam$-construction over $B$ in $N$ if for all $i<\al$, the type $tp\{d_i/B\cup\{d_j:j<i\})$ is $\lambda$-isolated.

\item
	We say that a set $C \subseteq N$ is $N$ is \emph{$\lam$-constructible} over $B$ in $N$ if 
	there is a $\lam$-construction $\d$ over $B$ in $N$. 
	
	In particular, we say that $N$ is
	\emph{$\lam$-constructible} over $B$ if 
there is a construction $N=B\cup\{d_i:i<\lambda\}$ such that for all $i<\lam$ the type 
$tp\{d_i/B\cup\{d_j:j<i\})$ is $\lambda$-isolated. 
\item 
	We say that a model $N$ is \emph{$\lambda$-primary} over $B$ if it is $\lam$-constructible and $\lam$-saturated. 
\end{enumerate}
\end{de}

\begin{remark}\label{rem:primary}
\begin{enumerate}
\item
 If $N$ is $\lam$-primary over $B$, then it is $\lam$-prime over $B$.
\item ($\lam$ regular)
 If $N$ is $\lam$-constructible over $B$ witnessed by a construction $N=B\cup\{d_i:i<\lambda\}$, then for every $\al<\lam$, $\tp(\{d_i:i<\al\}/B)$ is $\lam$-isolated. Hence $N$ is $\lam$-atomic over $B$.
\end{enumerate}
\end{remark}
%

Recall that given a complete set $B$ satisfying $|B| = \lam = \lam^{<\lam}$ with a saturated $P$-part, Proposition \ref{prp:complete_exnetdabletomodel} implies that $B$ can be extended to a saturated model of cardinality $\lam$ with the same $P$-part. We now show that in case $B$ is stable, this model can be chosen to be $\lam$-primary (hence $\lam$-prime) over $B$.

\begin{theorem}\label{th:primary}
	Assume that $B$ is a stable set, $|P^B| = |B| = \lam = \lam^{<\lam}$, $P^B$ is saturated. Then there is $N \supseteq B$ which is $\lam$-primary over $B$. 
\end{theorem}
\begin{proof}
By using Proposition \ref{pr:isoext} repeatedly, one constructs $N = B\cup\set{d_i\colon i<\lam}$ by induction on $i<\lam$ such that $\tp(d_i/B\cup\set{d_j\colon j<i})$ is 
$\lam$-isolated, while making sure that for all $i<\lam$, all types over subsets of $B_i = B\cup\set{d_j\colon j<i}$ of cardinality smaller that $\lam$ are realized by some $d_j$ for $j <i$.

Specifically, we construct by induction on $i$ a sequence $\lseq{\d}{i}{\lam}$ of sequences such that:

 \begin{itemize}
\item 
	$\d_i = \lseq{d}{\al}{\al_i}$ (where $d_\al$ is a singleton) with $\al_i < \lam$ (so in particular $|\d_i|<\lam$)
\item
	$\d_i$ is increasing and continuous with $i$ (so in particular $\al_i \le \al_j$ for $i<j)$
\item
	For every $i < \lam$, every $A \subseteq B$, $|A|<\lam$, every type over $A\cup \d_i$ is realized by some $d_\al$
\item
	For every $i < \lam$, the set $B_i = B \cup d_i$ is complete, and $P^{B_i} = P^B$
\item 
	For every $\al<\lam$, the type $\tp(d_\al/B \cup \set{d_\be\colon \be<\al})$ is $\lam$-isolated
\end{itemize}

If we succeed, then clearly $\d_\lam = \bigcup_{i<\lam} \d_i$ is a $\lam$-construction of a $\lam$-saturated model $N$ over $B$. 

\bigskip

Let $\d_0 = \seq{}$.

For $i$ limit, take unions. For $i = j+1$, let $B_j = B \cup \d_j$. Let the sequence $\seq{p_{j,\ga} \colon \ga \in [j,\lam)}$ list all the types over subsets of $B_j$ of cardinality $<\lam$ (recall that $\lam = \lam^{<\lam}$). Now consider the sequence 
$\seq{p_{\ell,j}\colon \ell<i}$. 

Recall that by induction, $\d_j = \lseq{d}{\al}{\al_j}$. Now for $\ell<i$ and $\al = \al_j + \ell$, let $d_\al$ realize $p_{\ell,j}$ such that $\tp(d_\al/B \cup \set{d_\be\colon \be<\al}) \in S_*(B \cup \set{d_\be\colon \be<\al})$ is $\lam$-isolated (this is possible by Proposition \ref{pr:isoext}).

Clearly, setting $\al_i = \al_j + i$, the sequence $\d_i = \lseq{d}{\al}{\al_i}$ is as required. 


\end{proof}

In the proof of Theorem \ref{th:primary}, the assumption that $\lam = \lam^{<\lam}$ was only used in in order to ensure that at any stage $i<\lam$ of the construction, the number of types over small subsets of $B_i$ is bounded by $\lam$. For a specific theory $T$, this assumption may hold for other cardinals $\lam$ (for instance, if $T$ has a saturated model of cardinality $\lam$).

So for example, the same proof as above gives the following stronger result:

\begin{co}\label{co:primary}
	Let $A$ be a stable set such that there exists a saturated model $N$ of cardinality $\lam$ containing $A$ with $P^N = P^A$. Then there exists $M$, $A \subseteq M \prec N$, $M$ is $\lam$-primary over $A$. 
\end{co}

\section{From Stability of Models: Quantifier Free Definitions}

The goal of this section is to establish the major technical tool of this paper: quantifier free definability of types orthogonal to $P$ over stable sets. However, as we have already pointed out at the end of section 5, for this we will need an additional hypothesis. 

In \cite{Sh234}, the first author has shown (see Theorems 2.10 and 2.12 there) that if there is an unstable \emph{model}, then there is a forcing extension in which there are many $M_i$ pairwise non-isomorphic with $M_i|P=M_j|P$ (all of cardinality $|P|>\aleph_0$). We will, therefore, following the Classification Theory guidelines, add yet another hypothesis to Hypothesis \ref{asm:1}. Specifically,  from now on we assume the following:

\begin{hyp}\label{asm:2} (\underline{Hypothesis 2}). 
	 Every $M\prec \cC$ is stable over $P$ (as in Definition \ref{dfn:startypes}(ii))
\end{hyp}
 
Now we are ready to prove that  *-types over all stable sets (not just models) are quantifier free internally definable, and therefore are also definable in $\cC$ in the usual sense. 

\smallskip

\begin{theorem}\label{4.1}\label{15} If $A$ is stable, $|A|\geq2$, then for every $\psi(\bar
x,\bar y)$ there is a quantifier free $\Psi _\psi (\bar y,\bar z)\in
L(T)$ such that whenever $B\equiv A$ and $ p\in S_*(B)$ then 
$p| \psi$ is
defined by $\Psi _\psi (\bar y,\bar d)$ for some $\bar d\subseteq B$,
i.e. $p| \psi=\{\psi(\bar x,\bar a):\bar a\in B, B\models
\Psi_\psi(\bar a,\bar d)\}$.
\end{theorem}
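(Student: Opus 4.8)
The plan is to upgrade the definition $\Psi_\psi$ obtained in Corollary \ref{14} from an arbitrary formula of $L(A)$ (possibly with quantifiers ranging over $A$) to a genuine quantifier-free formula of $L(T)$. The key observation is that Hypothesis \ref{asm:2} now guarantees that every model of $T$ is stable, so we have at our disposal not just abstract stable sets but actual models $M \prec \mathcal{C}$. First I would fix $\psi(\bar x, \bar y)$ and apply Corollary \ref{14} to obtain a formula $\Psi_\psi(\bar y, \bar b) \in L(A)$ defining $p\rest\psi$ for every $p \in S_*(B)$, $B \equiv A$; the issue is purely that $\Psi_\psi$ may contain quantifiers bounded to $B$, which is bad because we want the definition to be evaluated correctly in substructures (in particular, in the $P$-part and in small complete sets used to reconstruct models).

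The main step is to replace $B$ by a model. Given a stable set $A$ and $p \in S_*(B)$ with $B \equiv A$, I would like to extend $B$ to a model $M \prec \mathcal{C}$ with $B \subseteq M$ (using Proposition \ref{approx} / Lemma \ref{extension}, after passing to a compact enough $B$, which is harmless since the statement only concerns $\mathrm{Th}(\mathcal{C}\rest B)$ and we may take $B$ saturated), and then extend $p$ to a $*$-type $q \in S_*(M)$ — here is where Lemma \ref{extension} is used again. Since $M$ is itself a stable set by Hypothesis \ref{asm:2}, Corollary \ref{14} applies to $M$: $q\rest\psi$ is defined by some $\Psi'_\psi(\bar y, \bar d)$ with $\bar d \subseteq M$. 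Now the point is that $\Psi'_\psi$ quantifies over $M$, but $M \prec \mathcal{C}$, so for the purpose of evaluating the definition on tuples from $M$ (and hence from $B$) we may replace those bounded quantifiers by ordinary quantifiers over $\mathcal{C}$ — i.e., we get a definition $\Psi_\psi(\bar y, \bar d) \in L(T)$ honest in $\mathcal{C}$, and by quantifier elimination (Hypothesis \ref{asm:1}) we may take $\Psi_\psi$ quantifier-free. Finally, $p\rest\psi = (q\rest\psi) \cap \{\psi(\bar x,\bar a) : \bar a \in B\}$ since $q$ extends $p$, so the same quantifier-free $\Psi_\psi$ works for $p$ over $B$. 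A compactness argument as at the end of Corollary \ref{14} then yields a single $\Psi_\psi$ uniform over all $B \equiv A$ and all $p \in S_*(B)$.

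I expect the main obstacle to be the bookkeeping around cardinalities and compactness when passing from the abstract stable set $A$ (which need not embed in any model) to a model: one has to argue that it suffices to prove the definability statement for $B \equiv A$ with $B$ saturated of large cardinality $\lambda = \lambda^{<\lambda}$, that such $B$ has $B \cap P$ compact so that Proposition \ref{approx} applies, and that the finitely-many-candidates/compactness manipulation (as in \cite{Sh:c}, used already in Fact \ref{10} and Corollary \ref{14}) still produces a single formula once we know each individual $p\rest\psi$ is quantifier-free definable with a bounded number of candidate definitions. A secondary subtlety is making sure the extension $q \in S_*(M)$ of $p \in S_*(B)$ genuinely restricts back to $p$ on $\psi$-formulas with parameters in $B$ — this is immediate from $q \supseteq p$ and the definition of $p\rest\psi$, but worth stating. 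Everything else is routine given Corollary \ref{14}, Lemma \ref{extension}, Proposition \ref{approx}, quantifier elimination, and Hypothesis \ref{asm:2}.
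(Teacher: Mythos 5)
Your high-level picture — pass from the abstract complete set $B$ to a genuine model, get an $L(T)$ definition by applying Corollary \ref{14} there, and then use quantifier elimination to strip the quantifiers — is the right way to see why the definition can be made quantifier-free. But as written the argument has a real gap: it delivers a quantifier-free formula whose \emph{parameter tuple lives in the ambient model $M$, not in $B$}. Applying Corollary \ref{14} to $M$ (which is legitimate, since $M$ is stable by Hypothesis \ref{asm:2}) produces some $\Psi'_\psi(\bar y,\bar d)$ with $\bar d\subseteq M$ defining $q\rest\psi$. Nothing forces $\bar d$ into $B$; and your closing remark that $p\rest\psi = (q\rest\psi)\cap\{\psi(\bar x,\bar a):\bar a\in B\}$, while true, only says that the $M$-parameter definition correctly computes $p\rest\psi$ when evaluated at arguments from $B$ — it does not produce a definition over $B$, which is what the theorem demands ($\bar d\subseteq B$ and $B\models\Psi_\psi(\bar a,\bar d)$). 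There is also a secondary issue: Corollary \ref{14} applied to $M$ gives a scheme uniform over $B'\equiv M$, not over $B'\equiv A$, so even if you could relocate the parameters you would still need a separate compactness/contradiction argument to get a single $\Psi_\psi$ working across all $B\equiv A$.

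The paper's proof closes exactly this gap, and the mechanism is Lemma \ref{16} (which your outline never uses). One starts from a putative counterexample $(B,\bar c)$ with $B$ saturated of size $\lambda$, and builds not an arbitrary model over $B$ but a \emph{$\lambda$-primary} model $N$ with $P^N\subseteq B\subseteq N$, every element of which has a $\lambda$-isolated type over $B$ with a small isolating support $B_{\bar d}\subseteq B$. Embedding $N\prec M$ and applying Corollary \ref{14} to the model $N$ yields $L(T)$ (hence, after QE, quantifier-free) formulas $\Psi_\psi(\bar x,\bar e_\psi)$ with $\bar e_\psi\subseteq N$ defining $tp_\psi(\bar c/N)$. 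The crucial step — the one your proposal is missing — is to pull the parameters back into $B$: letting $E=\bigcup_\psi\bar e_\psi$ and $B^*=\bigcup_{\bar d\subseteq E}B_{\bar d}\subseteq B$ (so $|B^*|<\lambda$), the $\lambda$-isolation shows that any $\bar b_1,\bar b_2\in B$ with the same $L(T)$-type over $B^*$ have the same $\psi$-type over $\bar c$. A saturation/compactness argument against the set $\Gamma_\psi$ then extracts finitely many $\chi_l(\bar y,\bar d_l)\in L(T)$ with $\bar d_l\subseteq B^*\subseteq B$ such that $\psi(\bar c,\bar b)$ depends only on the pattern $\{l:\models\chi_l(\bar b,\bar d_l)\}$; since $T$ has quantifier elimination the resulting Boolean combination is quantifier-free, and its parameters are in $B$. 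Your plan would become a proof if you added precisely this layer — choose $N$ $\lambda$-primary via Lemma \ref{16}, and run the $B^*$/compactness argument to move the parameters into $B$ — after which the final compactness step you already describe gives uniformity over $B\equiv A$.
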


\begin{proof}
Let $\lambda=\lambda ^{<\lambda}, \lambda>|A|+|T|$ to make things simple.

Also note that if $A$ is stable and $\bar c$ is finite with
$tp(\bar c/A)\in S^n_*(A)$, then $A\cup \bar c$ is also stable
(as every $p(x)\in S^1_*(A\cup\bar c)$ gives rise to some type
$q(\bar x)\in S^{n+1}_*(A)$.)

Let $A,p$ be a counterexample, and $\bar c$ realize $p$.
We can find $B$ saturated of power $\lambda$ such that $({\cal
C}| (A\cup \bar c),A,\bar c)\prec({\cal C}|
(B\cup\bar c),B,\bar c)$. Clearly $B,\bar c$ form a counterexample
too, and in particular $tp(\bar c/B)\in S_*(B)$. We will arrive at a
contradiction by showing how to construct the required quantifier free
definition. By Proposition
\ref{approx} there is a model $M, P^M\subseteq B\cup \bar c\subseteq
M$. $Th(M,B,\bar c)$ has a
saturated model of power $\lambda$
preserving the relevant properties. So without loss of generality
$(M,B,\bar c)$ is saturated and $|M|=|B|=\lambda$.


By Theorem \ref{th:primary} and Remark \ref{rem:primary},
there is a $\lambda$-saturated model $N, P^N\subseteq B\subseteq
N$ such that for every $\bar d\subseteq N$, $tp(\bar d,B)$ is
$\lambda$-isolated, say over $B_{\bar d}\subseteq B, |B_{\bar
d}|<\lambda$, and a construction $N=\{d_i:i<\lambda\}$ such that
$tp\{d_i/B\cup\{d_j:j<i\})$ is $\lambda$-isolated. In particular, $N$ is $\lam$-prime over $B$.


Hence we can embed $N$ into $M$ over $B$. So without loss of
generality $N\prec M$, and in particular $P^M=P\cap B=P\cap N=P^N$ and
$tp(\bar c/N)\in S_*(N)$.

Hence there are formulas $\Psi _\psi(\bar x,\bar e_\psi)\in L(T),\bar
e_\psi\subseteq N$ defining $tp_\psi(\bar c/N)$ for $\psi\in L$.
Let $E=\bigcup_{\psi\in L(T)}\bar e_\psi\subseteq N$ and $B^*=\bigcup
_{\bar d\subseteq E}B_{\bar d}$. So $|E|\leq |T|$ and $|B^*|<\lambda$.

Now, if  $\bar b_1,\bar b_2\in B$ realize the same type over $B^*$ (in
${\cal C}$), then they realize the same type over $B^*\cup E$ by choice
of $E$. Hence, they  realize the same type over $B^*\cup E\cup \bar c$.

For $\psi\in L(M,B)$ let

$$\Gamma _\psi =\{\psi(\bar c,\bar y_1)\equiv\neg\psi(\bar c,\bar
y_2)\}\cup \{\chi(\bar y_1,\bar d)\equiv\chi (\bar y_2,\bar d):\chi\in
L(T),\bar d\subseteq B^*\}\cup \{\bar y_1\bar y_2\subseteq B\}$$.

By the previous observation $\Gamma _\psi$ is not \red{realized in $(M,B)$}. \red{By the fact that
$(M,B)$ is $\lambda$-saturated, and $|\Gamma _\psi|<\lambda$, it is inconsistent}. By compactness there are
$\chi _1,\dots ,\chi _n\in L(T)$ and $\bar d_1,\dots ,\bar d_n\in B^*$
such that

$$ \Gamma^1_\psi=\{\psi(\bar c,\bar y_1)\equiv \neg\psi (\bar c,\bar
y_2)\}\\
\cup\{\bar y_1\bar y_2\subseteq B\}\cup\{ \chi _l(\bar y_1,\bar
d_l)\equiv\chi _l (\bar y_2,\bar d_l):l=1,\dots ,n\}$$

is inconsistent.

So we can define $tp _\psi (\bar c/B)$ since

$$\models \psi (\bar c,\bar b)\Leftrightarrow [\{l:1\leq l\leq n,
\models\chi _l(\bar b,\bar d_l)\} {\rm \  is \ in} P^*] $$

for some appropriate $P^*\subseteq {\cal P}\{1,\dots ,n\}$.
Now apply compactness as in [Sh:c,II\S 2].
\end{proof}

Note that we have used the assumption that models are stable in the
proof.

\medskip

\begin{theorem}\label{17}\label{stable} Let $A$ be complete and $\lambda =\lambda^{<\lambda}$.
The following are equivalent:
\begin{itemize}
\item[(i)]  $A$ is stable.
\item[(ii$)_\lambda$ ] If $A^\prime\equiv A$ is $\lambda$-saturated,
$\lambda =|A^\prime|>|T|$, then over $A^\prime$ there is a
$\lambda$-primary 
model
$M$.
\item[(iii$)_\lambda$] If $A^\prime\equiv A$ is $\lambda$-saturated,
$\lambda >|T|$, then every $m$-type $p$ over $A$, $|p|<\lambda $ can be
extended to a $\lambda$-isolated $q\in S_*(A^\prime)$.
\item[(iv)] For every $A^\prime\equiv A$ and $p\in S_*(A)$ and $\phi\in
L(T)$, $p| \phi$ is definable by some $\Psi_\phi (\bar y,\bar
a),\bar a\subseteq A,\Psi _\phi\in L(T)$.
\item[(v)] There is some collection $\langle \Psi _\phi;\phi\in L\rangle $ such that
for every $A^\prime\equiv A,p\in S_*(A^\prime)$ and $\psi\in L(T),
p|\psi$ is definable by $\Psi_\psi (\bar y,\bar a)$ for some
$\bar a\in A^\prime$.
\end{itemize}
\end{theorem}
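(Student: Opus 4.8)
\textbf{Proof plan for Theorem~\ref{17}.}
The plan is to prove the cycle of implications $(i)\Rightarrow(ii)_\lambda\Rightarrow(iii)_\lambda\Rightarrow(iv)\Rightarrow(v)\Rightarrow(i)$, using the machinery already built in Sections~3 and~4. The implication $(i)\Rightarrow(ii)_\lambda$ is where most of the work lies: given $A'\equiv A$ which is $\lambda$-saturated with $\lambda=|A'|>|T|$, we want to build a $\lambda$-primary model over $A'$. Since $A$ is stable, so is $A'$, and we can invoke Lemma~\ref{16} repeatedly: we construct an increasing continuous chain $\langle N_i : i<\lambda\rangle$ of subsets of $\mathcal C$ with $N_0=A'$, at successor stages adding a single element $d_i$ so that $tp(d_i/N_i)\in S_*(N_i)$ is $\lambda$-isolated (this is exactly what Lemma~\ref{16} and Lemma~\ref{9} provide, together with the remark that $A\cup\bar c$ is stable when $tp(\bar c/A)\in S_*(A)$, so the hypotheses of Lemma~\ref{16} persist along the chain), and arrange by a bookkeeping argument that the union $M=\bigcup_{i<\lambda} N_i$ is $\lambda$-saturated. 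One must check that $M$ is complete (Fact~\ref{7}(ii) for the limit stages, and $tp(d_i/N_i)\in S_*(N_i)$ for successors keeps $P^M=P\cap A'$), so by Proposition~\ref{approx} $M$ is (or sits inside) an honest elementary submodel; and $M$ is $\lambda$-atomic over $A'$ by construction, hence $\lambda$-primary. The main subtlety is the simultaneous bookkeeping: one has to enumerate in advance all potential $S_*$-types over initial segments one will eventually need to realize, which is the standard construction of a prime/primary model but transported to the ``$*$-type'' setting — this is where I expect to have to be careful, since not every type, only those in $S_*$, may be realized without inflating $P$.

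The implication $(ii)_\lambda\Rightarrow(iii)_\lambda$ is easy: given a $\lambda$-saturated $A'\equiv A$ with $\lambda>|T|$, blow up to a $\lambda$-saturated $A''\equiv A'$ of cardinality exactly $\lambda=\lambda^{<\lambda}$ containing the parameters of $p$ (using our set-theoretic hypothesis), apply $(ii)_\lambda$ to get a $\lambda$-primary model $M$ over $A''$, realize $p$ inside $M$ by some $\bar c$, and note that $tp(\bar c/A'')$ is $\lambda$-isolated by $\lambda$-atomicity; it lies in $S_*(A'')$ because $P^M=P\cap A''$. Restricting the isolating set back down gives a $\lambda$-isolated extension of $p$ in $S_*$ over the original $A'$ (one absorbs the extra parameters into the type). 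The implication $(iii)_\lambda\Rightarrow(iv)$ runs exactly along the lines of the proof of Theorem~\ref{4.1} / Corollary~\ref{14}: given $p\in S_*(A)$ and $\phi$, pass to a large saturated $B\equiv A$ into which the situation elementarily embeds, use $(iii)_\lambda$ to get a $\lambda$-isolated $q\in S_*(B)$ extending $p$, and then run the rank computation (with $\Delta_1=\{\phi\}$, a suitable finite $\Delta_2$, $\lambda=2$, using that the relevant rank is even by Fact~\ref{rankeven}) to read off a definition $\Psi_\phi(\bar y,\bar a)$ of $p\rest\phi$ with $\bar a\subseteq A$; alternatively one cites Corollary~\ref{14} essentially verbatim.

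The implication $(iv)\Rightarrow(v)$ is a compactness/uniformization argument: for each $\phi$, $(iv)$ gives for every $A'\equiv A$ and every $p\in S_*(A')$ a defining formula from some finite list of candidates $\Psi^1_\phi,\dots,\Psi^{k}_\phi$ (finitely many, else by compactness one produces an $A'$ with an $S_*$-type whose $\phi$-part is undefinable, contradicting $(iv)$), and these can be amalgamated into a single $\Psi_\phi$ by the standard trick of [Sh:c, II\S2] (using $|P^{\mathcal C}|\ge2$), exactly as in Fact~\ref{10}. Finally $(v)\Rightarrow(i)$ is the short direction: a uniform definability scheme $\langle\Psi_\phi:\phi\in L\rangle$ bounds $|\{p\rest\phi : p\in S_*(A')\}|$ by $|A'|$ for each $\phi$, hence $|S_*(A')|\le|A'|^{|T|}$ for every $A'\equiv A$, which is precisely stability of $A$ (Definition~\ref{6}(iv)). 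I expect the only real obstacle to be the careful bookkeeping in $(i)\Rightarrow(ii)_\lambda$; every other step is a recombination of Lemmas~\ref{9},~\ref{16}, Proposition~\ref{approx}, Corollary~\ref{14} and Facts~\ref{7},~\ref{rank},~\ref{rankeven} already available in the text.
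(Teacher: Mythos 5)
The paper's own proof of Theorem~\ref{17} is just the pointer ``Included in the proofs of Theorem~\ref{15} and Lemma~\ref{16}'', so the real content is exactly what you are trying to flesh out, and your identification of the ingredients — Lemma~\ref{16} for the $\lambda$-isolation, the bookkeeping construction for $\lambda$-primary models, Corollary~\ref{14}/Theorem~\ref{15} for the definitions, and a cardinality count for the way back to $(i)$ — is the right one. So in spirit you and the paper are doing the same thing.

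However, the specific cycle $(i)\Rightarrow(ii)_\lambda\Rightarrow(iii)_\lambda\Rightarrow(iv)\Rightarrow(v)\Rightarrow(i)$ as you have organized it does not close, and the weak link is $(iii)_\lambda\Rightarrow(iv)$. There you propose to ``run the rank computation (with $\Delta_1=\{\phi\}$, a suitable finite $\Delta_2$, $\ldots$)'' or ``cite Corollary~\ref{14} essentially verbatim''. But the existence of a finite $\Delta_2$ making $R^n_A(\bar x=\bar x,\Delta_1,\Delta_2,2)<\omega$ is precisely Theorem~\ref{13}(ii), i.e.\ stability $(i)$ in rank form, and Corollary~\ref{14} is stated under the hypothesis that $A$ is stable. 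At the point of the cycle where you need $(iii)_\lambda\Rightarrow(iv)$ you do not yet have $(i)$, so invoking either is circular. Nor is it clear that $(iii)_\lambda$ (density of $\lambda$-isolated $*$-types) alone implies stability: having $\lambda$ many $\lambda$-isolated types dense in $S_*(A')$ does not by itself bound $|S_*(A')|$, since the non-isolated types could still be $2^\lambda$ in number. The honest decomposition is $(i)\Leftrightarrow(iv)\Leftrightarrow(v)$ (with $(iv),(v)\Rightarrow(i)$ being the easy counting argument you give at the end), and $(i)\Rightarrow(ii)_\lambda,(iii)_\lambda$ via Lemma~\ref{16} and the primary-model construction, exactly as you describe. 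What then remains open, both in your writeup and in the paper's one-line proof, is the implication from $(ii)_\lambda$ or $(iii)_\lambda$ back to $(i)$ (equivalently to $(iv)$); this requires a separate argument — most plausibly a contrapositive using the second half of Theorem~\ref{13} (if $A$ is unstable, then over a saturated $A'$ one produces too many pairwise-contradictory large types, and one must show this obstructs $\lambda$-isolation/$\lambda$-primariness, not merely definability) — which you should supply rather than route through Corollary~\ref{14}.

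One smaller remark: in your $(i)\Rightarrow(ii)_\lambda$ chain construction you should explicitly justify that the intermediate sets $N_i$ remain stable at limit stages (so that Lemma~\ref{16} continues to apply); at successors the remark after Lemma~\ref{16} handles it, but at limits this needs either a monotonicity property of the rank $R_A$ in $A$, or the observation that each $N_\delta$ sits inside a model $M'$ (via Proposition~\ref{approx}) which is stable by Hypothesis~\ref{asm:2}. The paper glosses over this too (``by a standard argument''), but if you are writing it out it deserves a sentence.
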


\begin{rm} So $(ii)_\lambda, (iii)_\lambda$ do not depend on
$\lambda$. 
\end{rm}

\begin{proof}
Included in the proofs of Theorem \ref{15}, Lemma \ref{16}, and Theorem \ref{th:primary}.
\end{proof}

\begin{theorem} ($T$ countable) If $A$ is stable, $\bar a\in A$, and
$\models\exists x\theta (\bar x,\bar a)$, then there is $p\in S_*(A)$
such that $\theta (\bar x,\bar a)\in p$ and for every $\phi\in L(T)$
there is $\psi(\bar x,\bar a^\prime)\in p$ such that $\psi (\bar x,\bar
a^\prime)\vdash p|\phi$ (i.e. $p$ is locally isolated, i.e
${\bf F}^l_{\aleph _0}$-isolated. So the locally isolated types are
dense in $S_*(A)$.)
\end{theorem}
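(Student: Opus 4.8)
The plan is to build $p \in S_*(A)$ by a transfinite construction of length $|T|$, handling one formula $\phi \in L(T)$ at a time, exactly in the spirit of the proof of Lemma \ref{16} but keeping track of local isolation. Since $T$ is countable, enumerate $L(T) = \{\phi_i(\bar x, \bar y_i) : i < \omega\}$. For each $i$, since $A$ is stable, fix by Theorem \ref{13} a finite $\Delta_i$ and $m_i < \omega$ with $R^n_A(\bar x = \bar x, \{\phi_i\}, \Delta_i, 2) \le m_i$ (here $n$ is the length of $\bar x$). We construct an increasing sequence of finite types $q_i(\bar x)$ over $A$ with $q_0 = \{\theta(\bar x, \bar a)\}$, maintaining: (a) $\bigcup_{j \le i} q_j$ is consistent, and (b) $R(\bigcup_{j \le i} q_j, \{\phi_i\}, \Delta_i, 2)$ is minimal among all finite consistent extensions of $\bigcup_{j < i} q_j$ by a finite type over $A$. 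Such a minimum exists because the rank takes finitely many values below $m_i + 1$.

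\emph{Key step — the rank stabilizes and is even.} As in Lemma \ref{16} (the "red" elaboration there), by Lemma \ref{9} extend $\bigcup_{j \le i} q_j$ to some $p^* \in S_*(A)$; then $R(p^*, \{\phi_i\}, \Delta_i, 2) \le R(\bigcup_{j \le i} q_j, \{\phi_i\}, \Delta_i, 2)$, and replacing $q_i$ by $q_i$ together with a finite subtype of $p^*$ realizing the rank of $p^*$ would violate the minimality (b) unless these ranks are equal. So $R(\bigcup_{j \le i} q_j, \{\phi_i\}, \Delta_i, 2) = R(p^*, \{\phi_i\}, \Delta_i, 2) =: k_i$, and by Fact \ref{rankeven} $k_i$ is even. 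Crucially, by minimality (b) and clause (iii) of Definition \ref{R} (with $\mu = 1$), there is no $\bar b \subseteq A$ with
\[ R\big(\textstyle\bigcup_{j \le i} q_j \cup \{\pm \phi_i(\bar x, \bar b)\}, \{\phi_i\}, \Delta_i, 2\big) \ge k_i. \]
Set $q(\bar x) = \bigcup_{i < \omega} q_i(\bar x)$; so $|q| \le |T| = \aleph_0$, $q$ is consistent, and $\theta(\bar x, \bar a) \in q$. By Lemma \ref{9} there is $p \in S_*(A)$ extending $q$.

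\emph{Local isolation.} Fix $\phi = \phi_i \in L(T)$ and let $r_i(\bar x) := \bigcup_{j \le i} q_j(\bar x)$, a finite subtype of $p$ over $A$, defined by a single formula $\psi_i(\bar x, \bar a'_i)$ with $\bar a'_i \subseteq A$ after taking conjunction. I claim $\psi_i(\bar x, \bar a'_i) \vdash p|\phi_i$. Indeed, for any $\bar b \subseteq A$ with $\phi_i(\bar x, \bar b) \in p$, monotonicity of rank (Fact \ref{rank}(i)) and the fact that $r_i \cup \{\phi_i(\bar x, \bar b)\} \subseteq p$ give $R(r_i \cup \{\phi_i(\bar x, \bar b)\}, \{\phi_i\}, \Delta_i, 2) \le k_i$, while the stabilization above gives $R(r_i \cup \{\phi_i(\bar x, \bar b)\}, \{\phi_i\}, \Delta_i, 2) = R(r_i, \{\phi_i\}, \Delta_i, 2) = k_i$ is impossible to improve upon — wait, I need the reverse direction: what I actually need is that $\neg\phi_i(\bar x, \bar b)$ is inconsistent with $r_i$. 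If not, then $r_i \cup \{\neg\phi_i(\bar x, \bar b)\}$ is a finite consistent extension, so by the "no $\bar b$" statement its rank is $< k_i$; but also $r_i \cup \{\phi_i(\bar x, \bar b)\}$ has rank $\le k_i$, and since $r_i$ is the disjoint union these two cover $r_i$, whose rank is exactly $k_i$. By monotonicity at least one of the two must have rank $\ge k_i$ (a standard consequence of clause (iii) of Definition \ref{R} applied to $r_i$ with $\mu = 1$ and $\psi = \phi_i$: if both had rank $< k_i$ then $R(r_i, \{\phi_i\}, \Delta_i, 2) < k_i$); since $\phi_i$-side is $\le k_i$, it must be that one of them is exactly $k_i$. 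If it were the $\phi_i(\bar x,\bar b)$ side that is $k_i$, that contradicts the "no $\bar b$" statement directly; if it were the $\neg\phi_i$ side that is $k_i$, same contradiction. Hence $r_i \cup \{\neg\phi_i(\bar x, \bar b)\}$ is inconsistent, i.e. $\psi_i(\bar x, \bar a'_i) \vdash \phi_i(\bar x, \bar b)$. The symmetric argument (starting from $\neg\phi_i(\bar x,\bar b) \in p$) handles the negative instances, so $\psi_i(\bar x, \bar a'_i) \vdash p|\phi_i$, as required; $p$ is $\mathbf{F}^l_{\aleph_0}$-isolated. Finally, density: given any complete $A$, any $\bar a \subseteq A$ and any formula $\theta(\bar x, \bar a)$ consistent with "$\bar x = \bar x$" over $A$ — i.e. realized in $\mathcal{C}$ — the above produces a locally isolated $p \in S_*(A)$ containing $\theta(\bar x, \bar a)$, which is exactly density of locally isolated types in $S_*(A)$.

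\emph{Main obstacle.} The delicate point is the dichotomy argument in the local-isolation step: extracting from the minimality clause (b) and clause (iii) of Definition \ref{R} the precise statement that for the single formula $\phi_i$, exactly one of $\phi_i(\bar x, \bar b), \neg\phi_i(\bar x, \bar b)$ is consistent with $r_i = \bigcup_{j\le i} q_j$ (equivalently, $r_i$ already decides $\phi_i$ over all of $A$). This is the analogue of the argument in Corollary \ref{14}, where the single-formula case $\Delta_1 = \{\psi\}$ forces the rank to drop on both $\pm\psi$-extensions; here one must additionally note that since $r_i \subseteq p$ and $p$ is consistent, at least one side survives, and the "rank cannot be $\ge k_i$ on either $\pm\phi_i$-extension" statement from minimality then forces the surviving side to be implied. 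The rest is the routine bookkeeping of a length-$|T|$ construction, which I would not spell out in full.
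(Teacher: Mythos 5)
Your overall strategy is exactly the paper's: the paper simply refers to the proofs of Theorem~\ref{15} and Lemma~\ref{16}, and you reproduce the Lemma~\ref{16} construction (rank-minimization along an enumeration of formulas, with $\Delta_i$ chosen so that the rank is finite), combined with the parity argument of Corollary~\ref{14}. That is the right route, and the ``no $\bar b$ with both $\pm\phi_i$-extensions of rank $\geq k_i$'' claim is correctly derived from $R(r_i)=k_i$ being even together with clause~(iii) of Definition~\ref{R}.

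The one genuine gap is in the local-isolation step. To conclude that $r_i=\bigcup_{j\leq i}q_j$ already decides $\phi_i(\bar x,\bar b)$ you invoke a ``max property'' of the rank --- ``if both had rank $<k_i$ then $R(r_i,\{\phi_i\},\Delta_i,2)<k_i$'' --- and call it a standard consequence of clause~(iii). That does not follow from Definition~\ref{R}: the odd levels of the rank (clause~(iv)) are about finding parameters in $A\cap P$ defining $\Delta_2$-types over $P$, not about $\phi_i$-splittings, so having low rank on both $\pm\phi_i$-extensions does not force the rank of $r_i$ itself to drop. The correct way to close the dichotomy, and the one the construction is actually set up to deliver, is to use the minimality clause~(b) directly: if $\phi_i(\bar x,\bar b)\in p$, then $r_i\subseteq r_i\cup\{\phi_i(\bar x,\bar b)\}\subseteq p$ with $R(r_i)=R(p)=k_i$ forces $R(r_i\cup\{\phi_i(\bar x,\bar b)\},\{\phi_i\},\Delta_i,2)=k_i$; then by the ``no $\bar b$'' claim $R(r_i\cup\{\neg\phi_i(\bar x,\bar b)\},\{\phi_i\},\Delta_i,2)<k_i$ if the latter is consistent --- but then replacing $q_i$ by $q_i\cup\{\neg\phi_i(\bar x,\bar b)\}$ would have produced a strictly smaller rank at stage $i$, contradicting minimality, so $r_i\cup\{\neg\phi_i(\bar x,\bar b)\}$ is inconsistent after all. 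You in fact flag this as the ``delicate point'' and sketch the right shape of the argument, but the justification as written is not sound. A secondary, shared issue worth noting: both you and the paper's Lemma~\ref{16} lean on Lemma~\ref{9}, which needs $A\cap P$ to be $|A|$-compact, whereas the theorem as stated assumes only that $A$ is stable; the intended fix (pass to a saturated $A'\equiv A$ to establish evenness of the rank, which by Fact~\ref{rank}(iii) is a property of $\mathrm{Th}(\mathcal{C}\restriction A)$, and observe that the constructed $q$ determines a complete type over $A$ whose completeness conditions are met because the defining parameters land in $A\cap P$) is not spelled out in either version.
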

\begin{proof}
Again this is contained in the proofs of Theorem \ref{15} and Lemma \ref{16}.
\end{proof}

\section{Stationarization and Independence}

The following definition mimics the Tarski-Vaught criterion, when one does not demand
$A,B\prec {\cal C}$.

\begin{de}
$A\subseteq _t B$ if for every $\bar a\in A,\bar b\in B$ 
and $\psi\in L(T)$ such that $\models\psi(\bar b,\bar a)$ there is  
some $\bar
b^\prime\subseteq A$ such that $\models \psi(\bar b^\prime,\red{\bar a})$
\end{de}

\noindent
 As a simple example, note that $A$ is complete if and only if $A\cap P\subseteq _t P$.

\medskip

We can now define ``free'' (``non-forking'') extensions for *-types over stable sets. Such extensions will  be 
defined only to supersets that are ``elementary extensions'' in the sense defined above. 

The use of the term ``non-forking'' above is not just by analogy with classical stability theory, but (at least under certain circumstances, e.g., when $A$ is a model) has a precise technical meaning, as the notion of independence defined below coincides with the usual non-forking independence; see Corollary \ref{co:nonforking}.

\begin{de}

Suppose $A$ is stable, $p\in S_*(A)$ and $A\subseteq _t B$.
Then $q\in S(B)$ is a {\it stationarization} of $p$ over $B$ if 
for every $\psi\in L$ there is some definition $\Psi _\psi(\bar y,\bar a
_\psi)$ with $\bar a_\psi\subseteq A$ that defines both $p_\psi$ and $q _\psi$.
\end{de}

\begin{no}\label{no:ind}
\begin{enumerate}
\item
	We write $\a \ind_A B$ if $A$ is stable and $q=\tp(\a/B)$ is a stationarization of $p = \tp(a/A)$ (so in particular $p \in S_*(A)$ and $A \subseteq_t B$). In this case, will also write $q = p|B$.
\item
	We write $C \ind_A B$ if for every $\a \in C$  we have $\a \ind_A B$.
\end{enumerate}
\end{no}

\smallskip

\noindent
If $\a \ind_A B$ or $C \ind_A B$, we say that $\a$ (or $C$) is \emph{independent} from $B$ over $A$.

\bigskip

Let us point out some basic properties of the notions defined above. 

\red{
\begin{lem}\label{saturated-t}
\begin{enumerate}
\item 
	$A \subseteq_t B$ if and only for every quantifier free formula $\ph(\x)$ over $A$, if there exists $\b\in B$ such that $\models \ph(\b)$, then $A \models \exists \x \ph(\x)$.
\item
	If $A$ is $\lam$-saturated then $A \subseteq_t B$ if and only for every (partial) type $p(\x)$ over a subset of $A$ of size $<\lam$, if $p$ is realised by some  $\b \in B$, then it is realised by some  $\a \in A$.
\end{enumerate}
\begin{proof}
\begin{enumerate}
\item 
	By quantifier elimination and the assumption that there are no function symbols (so every subset is a substructure). 
\item
	By quantifier elimination, $p(x)$ is equivalent (in $\cC$) to a quantifier free type $\Delta(\x)$. By part (i), $\Delta$ is finitely satisfiable in $A$, and by saturation there exists $\a \in A$ such that $A\models \theta(\a)$ for all $\theta(\x) \in \Delta$. Since $\Delta$ is quantifier free, the truth value of $\theta(\a)$ is preserved between $A$ and $\cC$; so $\Delta(\x)$ is realised by $\a$, hence so is $p(\x)$.
\end{enumerate}

\end{proof}

\end{lem}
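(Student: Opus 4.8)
The plan is to prove Lemma~\ref{saturated-t} by unwinding the definition of $\subseteq_t$ and exploiting the two standing structural assumptions from Hypothesis~\ref{asm:1}: quantifier elimination and the absence of function symbols (so that every subset of $\mathcal{C}$ is already a substructure, and truth of quantifier-free formulas with parameters in $A$ is absolute between $A$, $B$, and $\mathcal{C}$).

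For part (1), the forward direction is essentially immediate: if $A \subseteq_t B$ and $\varphi(\bar x)$ is quantifier-free over $A$ with a realization $\bar b \in B$, then writing $\varphi(\bar x) = \psi(\bar x, \bar a)$ with $\bar a \subseteq A$, the definition of $\subseteq_t$ gives $\bar b' \subseteq A$ with $\models \psi(\bar b', \bar a)$; since $\psi$ is quantifier-free this is witnessed inside $A$, so $A \models \exists \bar x\, \varphi(\bar x)$. For the converse, I would start from an arbitrary formula $\psi(\bar b, \bar a)$ true in $\mathcal{C}$ with $\bar b \in B$, $\bar a \in A$; by quantifier elimination $\psi(\bar y, \bar a)$ is equivalent in $\mathcal{C}$ to a quantifier-free formula $\varphi(\bar y)$ over $A$, which $\bar b$ realizes, so by hypothesis $A \models \exists \bar y\, \varphi(\bar y)$; a witness $\bar b' \subseteq A$ then satisfies $\varphi(\bar b')$, hence $\psi(\bar b', \bar a)$, again using absoluteness of quantifier-free truth. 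This gives $A \subseteq_t B$.

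For part (2), I would argue as in the excerpt's sketch: given a partial type $p(\bar x)$ over $A_0 \subseteq A$ with $|A_0| < \lambda$, realized by $\bar b \in B$, quantifier elimination lets me replace $p$ by an equivalent (in $\mathcal{C}$) quantifier-free type $\Delta(\bar x)$ over $A_0$, still realized by $\bar b$. By part (1) applied to each finite conjunction from $\Delta$, the type $\Delta$ is finitely satisfiable in $A$; since $|\Delta| < \lambda$ (the language is of size $\leq |T| < \lambda$ and $|A_0| < \lambda$) and $A$ is $\lambda$-saturated, some $\bar a \in A$ realizes $\Delta$. Because $\Delta$ is quantifier-free, $\bar a$ realizes $\Delta$ already inside $\mathcal{C}$, hence realizes $p$. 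The reverse implication of (2) is trivial: it is the $\lambda$-saturation-free direction of (1) restated for types, since a single formula is a type of size $1 < \lambda$.

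I do not expect a genuine obstacle here; the only point requiring a little care is bookkeeping of cardinalities in part (2)—ensuring that the quantifier-free type $\Delta$ produced from $p$ has size $<\lambda$ so that $\lambda$-saturation applies—and making explicit at each step that passing between $A$, $B$, and $\mathcal{C}$ is harmless because all relevant formulas are quantifier-free and there are no function symbols. This is exactly the content of the ``Easy''-style proofs elsewhere in the paper, so the write-up should be short.
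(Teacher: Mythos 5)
Your argument is correct and follows exactly the paper's route: part (1) is the definition of $\subseteq_t$ combined with quantifier elimination and absoluteness of quantifier-free truth (no function symbols), and part (2) is the QE reduction of the type to a quantifier-free type, followed by finite satisfiability via part (1) and $\lambda$-saturation. You have simply filled in the bookkeeping that the paper's terse proof leaves implicit.
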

}

\begin{lem}\label{station}\label{20} Assume $A$ is stable, $A\subseteq _t B$ and
$p\in S_*(A)$. Then:
\begin{itemize}
\item[(i)] $p$ has a stationarization $q$ \red{over $B$}.
\item[(ii)] It is unique: We can replace "some $\Psi _\psi(\bar y,\bar a
_\psi)$" by "every...", so $q$ does not depend on its choice. 
\item[(iii)] If $B$ is complete, $q\in S_*(B)$.
\end{itemize}
\end{lem}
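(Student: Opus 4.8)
The plan is to handle the three parts in order, leaning on the quantifier-free definability of $*$-types (Theorem \ref{15}) and on the elementary characterizations of $\subseteq_t$ in Lemma \ref{saturated-t}. For part (i), existence: let $\langle \Psi_\psi \colon \psi \in L(T) \rangle$ be the uniform quantifier-free definitions provided by Theorem \ref{15} (valid for all $B' \equiv A$ and all $p' \in S_*(B')$), and for each $\psi$ fix $\bar a_\psi \subseteq A$ with $\Psi_\psi(\bar y, \bar a_\psi)$ defining $p|\psi$ inside $A$. Define $q = \{ \psi(\bar x, \bar b) \colon \bar b \subseteq B,\ \models \Psi_\psi(\bar b, \bar a_\psi) \}$. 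I would first check $q$ is a consistent complete type: consistency follows because any finite subset of $q$, say involving parameters $\bar b_1, \dots, \bar b_n \subseteq B$, is ``witnessed'' by the requirement that $A \subseteq_t B$ — more precisely, the statement ``$\exists \bar x \bigwedge_j \psi_j(\bar x, \bar b_j)$'' is (after quantifier elimination) a quantifier-free-with-parameters assertion that, by stability and the rank machinery behind Theorem \ref{15}, is implied over $A$ by the $\Psi_{\psi_j}(\bar b_j, \bar a_{\psi_j})$; since these hold and $A \subseteq_t B$, a realization can be found. Completeness of $q$ as a type is immediate from the fact that $\Psi_\psi$ and $\Psi_{\neg\psi}$ partition, which is part of what Theorem \ref{15}/Corollary \ref{14} give. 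Then $q$ is by construction a stationarization of $p$ over $B$.

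For part (ii), uniqueness: suppose $\Psi'_\psi(\bar y, \bar a'_\psi)$ with $\bar a'_\psi \subseteq A$ is another pair of formulas defining $p|\psi$ (inside $A$) and $q|\psi$ (inside $B$) for some stationarization $q'$. The key point is that $\Psi_\psi(\bar y, \bar a_\psi)$ and $\Psi'_\psi(\bar y, \bar a'_\psi)$ define the \emph{same} subset of $A^{|\bar y|}$, namely $p|\psi$; so $A \models \forall \bar y\, [\Psi_\psi(\bar y, \bar a_\psi) \leftrightarrow \Psi'_\psi(\bar y, \bar a'_\psi)]$. This is a quantifier-free (after QE, or directly) sentence with parameters in $A$; I would argue that $A \subseteq_t B$ forces the same equivalence to hold for all $\bar b \subseteq B$ — here one uses that the negation ``$\exists \bar y \subseteq B$ with $\Psi_\psi(\bar y, \bar a_\psi) \not\leftrightarrow \Psi'_\psi(\bar y, \bar a'_\psi)$'' is a condition over $A$ that, if satisfiable in $B$, would by $A \subseteq_t B$ (Lemma \ref{saturated-t}(1)) be satisfiable in $A$, contradiction. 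Hence the two definitions agree on $B$, so $q = q'$, and we may indeed replace ``some'' by ``every''.

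For part (iii), assume $B$ is complete; we must show $q \in S_*(B)$, i.e. (by Remark \ref{8.5}) that for every formula $\psi(\bar x, \bar b, \bar c)$ with $\bar b \subseteq B$ and $\bar c \models q$, if $\models \exists \bar x \in P\, \psi(\bar x, \bar b, \bar c)$ then there is $\bar m \in P \cap B$ with $\models \psi(\bar m, \bar b, \bar c)$. The idea is to transfer the corresponding property from $A$: since $p \in S_*(A)$, the analogous statement holds over $A$, and then use the definability of $q$ over $A$ together with completeness of $B$ (equivalently $B \cap P \subseteq_t P$ and $B \cap P \prec P^{\mathcal C}$, via Fact \ref{7}(iii)). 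Concretely, I would take a realization $\bar c$ of $q$, consider the formula $\exists \bar x \in P\, \psi(\bar x, \bar y, \bar c)$ as a formula in $\bar y$, note its $\psi'$-type over $P^{\mathcal C}$ (for the relevant $\psi'$) is defined over $B \cap P = A \cap P$ by completeness of $B$ and $A$, and then the existence of the $P$-witness $\bar m$ reduces, via this definition, to the corresponding fact inside $A \cup \bar c'$ for $\bar c' \models p$ — which holds because $p \in S_*(A)$.

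\textbf{Main obstacle.} I expect the consistency argument in part (i) to be the delicate point: one must verify that the ``diagram'' defined by the $\Psi_\psi(\bar y, \bar a_\psi)$ is genuinely finitely satisfiable in $\mathcal C$, and the clean way to see this is to observe that all the defining formulas, being evaluated at parameters in $A$, describe exactly $p$ when restricted to $A$, so any finite inconsistency would already be an inconsistency witnessed over $A$ — and then $A \subseteq_t B$ (which is exactly designed to let quantifier-free existential statements reflect between $B$ and $A$) closes the gap. Part (iii) is the second most technical, requiring care in matching up the completeness witnesses; but it is essentially a bookkeeping exercise with Fact \ref{7}(iii) and Remark \ref{8.5} once the definability picture from (i)–(ii) is in place.
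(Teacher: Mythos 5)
Your construction of $q$ in part (i) and the uniqueness argument in part (ii) are essentially the paper's: define $q$ by the quantifier-free definitions over $A$, and use $A\subseteq_t B$ to reflect any violation down to $A$. Part (ii) is stated correctly and precisely. However, two things are glossed over or wrong.

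For part (i), you assert that completeness of $q$ is ``immediate from the fact that $\Psi_\psi$ and $\Psi_{\neg\psi}$ partition.'' This is not immediate: the definitions $\Psi_\psi(\bar y,\bar a_\psi)$ and $\Psi_{\neg\psi}(\bar y,\bar a_{\neg\psi})$ are complementary \emph{on $A$} (because $p$ is a complete type), but there is no a priori reason they should be complementary on $B$. You need the same $A\subseteq_t B$ reflection you use for consistency: if $\neg\Psi_\psi(\bar b,\bar a_\psi)\wedge\neg\Psi_{\neg\psi}(\bar b,\bar a_{\neg\psi})$ held for some $\bar b\subseteq B$, reflect to find $\bar b'\subseteq A$ with the same, contradicting completeness of $p$. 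This is the paper's step, and your sketch skips it.

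For part (iii), the claim $B\cap P=A\cap P$ is false in general: $A\subseteq_t B$ does not force $B$ to have no new $P$-elements (e.g.\ $B=A\cup\{c\}$ with $c\in P^{\mathcal C}\setminus A$ can satisfy $A\subseteq_t B$, by completeness of $A$, while strictly enlarging $P$). Consequently the argument that ``reduces'' to a fact inside $A\cup\bar c'$ cannot be made to work as stated, since the witness $\bar m$ you need lives in $B\cap P$, not $A\cap P$. The correct mechanism, which the paper uses, is: suppose $(\exists\bar z\in P)\phi(\bar x,\bar b,\bar z)\in q$ but no $\bar m\subseteq B\cap P$ gives $\phi(\bar x,\bar b,\bar m)\in q$. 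Using completeness of $B$, upgrade ``no witness in $B\cap P$'' to the first-order statement $\neg(\exists\bar z\subseteq P)\Psi_\phi(\bar b,\bar z,\bar a_\phi)$ with parameters $\bar b\subseteq B$, $\bar a_\phi\subseteq A$. Then $A\subseteq_t B$ transfers this (conjoined with $\Psi_{\phi'}(\bar b,\bar a_{\phi'})$) to some $\bar b'\subseteq A$, which contradicts $p\in S_*(A)$ by Remark \ref{8.5}. The roles of $B$'s completeness (to pass from ``no witness in $B\cap P$'' to a universal statement over $P^{\mathcal C}$) and of $A\subseteq_t B$ (for the transfer) are both essential and are not clearly separated in your sketch.
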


\begin{proof}
(i) By Theorem \ref{4.1} there are quantifier free formulas $\Psi 
_\psi(\bar y,\bar a_\psi)$ with $\bar a_\psi\in A$ defining $p|_\psi$.
Let $q=\{\psi(\bar x,\bar b);\bar b\subseteq B$ and $\models \Psi
_\psi(\bar b,\bar a _\psi)\}$.

$q$ is consistent: If not, there are $n<\omega,\psi _l(\bar x,\bar b
_l)\in q (l=1,\dots ,n)$ such that $\models \neg \exists\bar x(\bigwedge
_{l=1}^n\psi _l(\bar x,\bar b_l))$. So $\models\bigwedge _{l=1}^n\
\Psi _{\psi _l}(\bar b_l,\bar a_{\psi _l})
\wedge \neg\exists\bar x(\bigwedge _{l=1}^n\psi _l(\bar x,\bar b_l))$.
This is a formula in $L(T)$.
So there are $\bar b_l\in A, (l=1,\dots ,n)$ such that $\models\bigwedge
_{l=1}^n\Psi _{\psi _l}(\bar b^\prime _l,\bar a_{\psi
_l})\wedge\neg\exists \bar x(\bigwedge _{l=1}^n\psi _l(\bar x,\bar
b^\prime_l))$.         But $\bigwedge _l \psi _l(\bar x,\bar b^\prime
_l)\in p$, contradicting the consistency of $p$.

$q$ is complete: Assume  $\bar b\subseteq B,\psi\in L$ and $\psi (\bar
x,\bar b), \neg\psi(\bar x,\bar b)\notin q$. So $\models\neg\Psi _\psi(\bar
b,\bar a _\psi)\wedge\neg\Psi _{\neg\psi}(\bar b,\bar a _{\neg\psi})$.
By the definition of $q$ and $A\subseteq _t B$ there is  some $\bar
b^\prime\subseteq A$ so that $\models \neg\Psi_\psi (\bar b^\prime,\bar
a_\psi)\wedge\neg\Psi_{\neg\psi}(\bar b^\prime,\bar a _{\neg\psi})$. But
then $\psi (\bar x,\bar b^\prime),\neg\psi (\bar x,\bar b^\prime)\notin
p$ and $p$ is complete, a contradiction.

(ii) Same proof: Let $\Psi, q$ be as in (i), and suppose that $\Psi'
_\psi(\bar y,\bar a'_\psi)$ (not necessarily quantifier free) with $\bar a'_\psi\in A$ also defines $p|_\psi$, and assume that it defines a $\psi$-type $q'_\psi$ over B. If $q' \neq q$, that is, for example, $\psi(\x,\b) \in q$, $\psi(\x,\b) \not\in q'$, then $B \models \Psi_\psi(\b) \land \neg\Psi_\psi'(\b)$. Since $A \subseteq_t B$, there exists $\a \in A$ such that $\Psi_\psi(\a) \land \neg\Psi'_\psi(\a)$, which is clearly absurd, since both schemata $\Psi$ and $\Psi'$ define the same type $p \in S(A)$.

(iii) 
Let $B$ be complete. We
consider $\phi (\bar x,\bar y,\bar z)$ with $\phi ^\prime (\bar x,\bar
b):=(\exists\bar z\in P)\phi (\bar x,\bar b,\bar z)\in q, \bar b\in
^{\omega >} B$. We have to show that for some $\bar c\in ^{\omega >}
(B\cap P),\phi (\bar x,\bar b\bar c)\in q$. Without loss of generality
$\phi (\bar x,\bar y,\bar z)\vdash \bar z\subseteq P$. So in the
problematic case $\bar b\in ^{\omega >} B,\models \Psi _{\phi ^\prime}
(\bar b)$, but for no $\bar c\subseteq \red{{}^{\omega >}} (B\cap P)$ is
$\models\Psi_\phi (\bar b,\bar c,\bar a_\phi)$. But since $B$ is complete
this implies $\models \Psi _{\phi ^\prime}(\bar b)\wedge (\neg\exists\bar
z\subseteq P)\Psi _\phi ^\prime (\bar b,\bar z)$, so this is satisfied
by some $\bar b^\prime\in A$ \red{(since the definitions are over $A$ and $A\subseteq_t B$)}, and we get a contradiction \red{to $p \in S_*(A)$}.
\end{proof}
\begin{co} (of the proof):
If $A\subseteq _t B, A$ stable, $\bar c\subseteq\bar b,tp(\bar b/A)\in
S_*(A)$, then $tp(\bar c/A)\in S_*(A)$ and the stationarization of
$tp(\bar b/A)$ over $B$ includes the stationarization of $tp(\bar c/A)$
over $B$.
\end{co}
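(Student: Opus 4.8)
The plan is to extract from the proof of Lemma \ref{station} a "monotonicity in the realizing tuple" statement about stationarizations. First I would check that $tp(\bar c/A) \in S_*(A)$: since $\bar c \subseteq \bar b$ and $tp(\bar b/A) \in S_*(A)$, we have $P \cap (A \cup \bar c) \subseteq P \cap (A \cup \bar b) = P \cap A$, and completeness of $A \cup \bar c$ follows from completeness of $A \cup \bar b$ together with quantifier elimination (any formula witnessed over $A \cup \bar c$ is a fortiori witnessed over $A \cup \bar b$), so by Definition \ref{6}(iii) (or directly by Remark \ref{8.5}) we get $tp(\bar c/A) \in S_*(A)$. Hence by Lemma \ref{station}(i) both $p = tp(\bar b/A)$ and $p' = tp(\bar c/A)$ have (unique) stationarizations over $B$, call them $q$ and $q'$.

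Next I would compare the defining schemes. By Theorem \ref{4.1}, for each formula $\psi(\bar x, \bar y)$ there is a quantifier-free $\Psi_\psi(\bar y, \bar z)$ such that $p|_\psi$ is defined by $\Psi_\psi(\bar y, \bar a_\psi)$ for some $\bar a_\psi \subseteq A$; similarly $p'|_{\psi'}$ is defined by $\Psi_{\psi'}(\bar y, \bar a'_{\psi'})$. The key point is that a $\psi$-formula in the variables $\bar x'$ (matching $\bar c$) is the same as a $\psi$-formula in a sub-block of the variables $\bar x$ (matching $\bar b$): writing $\bar x = \bar x' \,\hat{}\, \bar x''$, a formula $\psi(\bar x', \bar a)$ is literally a formula $\hat\psi(\bar x, \bar a)$ that ignores the $\bar x''$-coordinates. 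Therefore the definition of $p|_{\hat\psi}$ restricted appropriately is a definition of $p'|_\psi$, and by the uniqueness clause Lemma \ref{station}(ii) — where "some $\Psi_\psi$" may be replaced by "every $\Psi_\psi$" — the stationarization of $p'$ is computed using the same definitions, so $q'|_\psi = \{\psi(\bar x', \bar b) : \bar b \subseteq B,\ \models \Psi_{\hat\psi}(\bar b, \bar a_{\hat\psi})\}$ equals the restriction of $q$ to the $\bar x'$-variables. That is precisely "$q$ includes the stationarization of $tp(\bar c/A)$".

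I would formalize "includes" by noting that $q'$, as a complete type in the variables $\bar x'$, is exactly $\{\phi(\bar x') : \phi(\bar x) \in q\}$ where we reinterpret an $\bar x'$-formula as an $\bar x$-formula; equivalently, any realization of $q$ in $\mathcal{C}$, restricted to the $\bar c$-coordinates, realizes $q'$. This is immediate once the defining schemes are matched as above.

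The main obstacle — really the only thing requiring care — is the bookkeeping identifying "$\psi$-formulas over the short variable tuple" with "$\hat\psi$-formulas over the long tuple" and then invoking the uniqueness of stationarization (Lemma \ref{station}(ii)) to conclude that the two procedures produce compatible types; everything else (that $tp(\bar c/A) \in S_*(A)$, consistency and completeness of $q'$) is either already proved in Lemma \ref{station} or a routine consequence of quantifier elimination and the definitions. One should also double-check that if $B$ happens to be complete then $q' \in S_*(B)$ as well, which follows from Lemma \ref{station}(iii) applied to $p'$, so the corollary is consistent with the $S_*$ framework.
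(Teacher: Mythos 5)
Your proposal is correct and follows exactly the intended route: the paper offers no separate proof (the heading ``of proof'' signals that the corollary is to be read off from the argument for Lemma~\ref{station}), and the extraction you perform -- (a) $tp(\bar c/A)\in S_*(A)$ because $P\cap(A\cup\bar b)=P\cap A$ forces witnesses into $P\cap A\subseteq A\cup\bar c$, and (b) the ``includes'' claim by identifying $\psi(\bar x',\bar y)$ with a formula $\hat\psi(\bar x,\bar y)$ ignoring the extra coordinates, so $p|_{\hat\psi}$ and $p'|_{\psi}$ share a defining formula and the uniqueness clause of Lemma~\ref{station}(ii) forces $q'$ to be the restriction of $q$ -- is precisely the natural one. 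One small stylistic note: in (a) the parenthetical ``any formula witnessed over $A\cup\bar c$ is a fortiori witnessed over $A\cup\bar b$'' is slightly misleading (and quantifier elimination is not really used there); the crux is simply that the $P$-part of $A\cup\bar b$ is already $P\cap A$, so the witness guaranteed by completeness of $A\cup\bar b$ automatically lies in $A\cup\bar c$. Remark~\ref{8.5}, which you mention as an alternative, gives the cleanest wording of this point.
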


\red{
\begin{co} 
	Let $q \in S_*(B)$  definable over $A \subseteq_t B$, $A$ a stable set. Then $q$ is the stationarization of $q\rest A$. 
\end{co}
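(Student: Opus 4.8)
The plan is, writing $p := q \rest A$, to establish two things: (1)~$p \in S_*(A)$, and (2)~$q$ is the stationarization of $p$ over $B$. Observe first that $B$ is complete (since $S_*(B)$ is only defined for complete sets) and $A$ is complete (being stable), so all the notions involved are meaningful; and since $T$ has quantifier elimination, the hypothesis that $q$ is definable over $A$ may be read as: for each $\psi(\bar x,\bar y)\in L(T)$ there is a \emph{quantifier free} formula $\Theta_\psi(\bar y,\bar b_\psi)$ with $\bar b_\psi\subseteq A$ such that for all $\bar b\subseteq B$, $\psi(\bar x,\bar b)\in q$ iff $\models\Theta_\psi(\bar b,\bar b_\psi)$.

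To prove (1) I would use the criterion of Remark~\ref{8.5}. Fix $\bar c\models q$, a formula $\psi$, and $\bar a\subseteq A$ with $\models\exists\bar x\in P\,\psi(\bar x,\bar a,\bar c)$; the task is to produce $\bar m\subseteq P\cap A$ with $\models\psi(\bar m,\bar a,\bar c)$. Since $q\in S_*(B)$ and $\bar a\subseteq A\subseteq B$, Remark~\ref{8.5} applied to $B$ gives $\bar m'\subseteq P\cap B$ with $\models\psi(\bar m',\bar a,\bar c)$. Now regard $\psi(\,\cdot\,,\bar a,\bar c)$ as a condition on tuples $\bar b\subseteq B$; by definability of $q$ over $A$ there is a quantifier free $\vartheta(\bar u)$ over $A$ such that, for every $\bar b\subseteq B$, $\models\psi(\bar b,\bar a,\bar c)\iff\models\vartheta(\bar b)$. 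Hence $\models\vartheta(\bar m')$, and since $\bar m'\subseteq P$ we get $\models\exists\bar u\in P\,\vartheta(\bar u)$; as $A$ is complete and $\vartheta$ is quantifier free over $A$, there is $\bar m\subseteq P\cap A$ with $\models\vartheta(\bar m)$, and therefore (as $\bar m\subseteq A\subseteq B$) $\models\psi(\bar m,\bar a,\bar c)$. So the criterion of Remark~\ref{8.5} is met and $p\in S_*(A)$. (Taking $\psi$ to be equality in this argument also shows that every coordinate of $\bar c$ that lies in $P$ already lies in $A$, so no separate check of $P\cap(A\cup\bar c)=P\cap A$ is required.)

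For (2), Lemma~\ref{station} applies: $A$ is stable, $A\subseteq_t B$, and $p\in S_*(A)$, so $p$ has a unique stationarization $q^*$ over $B$, and $q^*\in S_*(B)$ since $B$ is complete. Restricting the parameters of the quantifier free definitions $\Theta_\psi(\bar y,\bar b_\psi)$ of $q$ to $A$, each $\Theta_\psi(\bar y,\bar b_\psi)$ defines $p|_\psi$. By the uniqueness clause of Lemma~\ref{station}(ii) -- in which ``some definition $\Psi_\psi$'' may be replaced by ``every definition'' -- the formula $\Theta_\psi(\bar y,\bar b_\psi)$ also defines $q^*|_\psi$; but it defines $q|_\psi$ too, so $q$ and $q^*$ have the same $\psi$-part for every $\psi$, whence $q=q^*$. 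Thus $q$ is the stationarization of $q\rest A$.

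The main obstacle is step (1): checking that the restriction $q\rest A$ of a type in $S_*(B)$ again lies in $S_*(A)$, i.e.\ that cutting the parameter set down to $A$ neither destroys completeness of $A\cup\bar c$ nor creates new elements of $P$. This is precisely where the hypothesis that $q$ is definable over $A$ (together with quantifier elimination and completeness of $A$) does the work: definability reduces each relevant assertion about $\bar c$ over $B$ to a quantifier free assertion over $A$, which completeness of $A$ then settles. Once routed through Remark~\ref{8.5} this is short; step (2) is then bookkeeping with Lemma~\ref{station}, where the hypothesis $A\subseteq_t B$ enters.
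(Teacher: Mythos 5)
Your proof is correct. The paper states this corollary without an argument of its own (it is one of the red, later-added remarks following Lemma~\ref{20}), so there is nothing to compare against literally; but your route is exactly the natural one. You split the claim into (1)~$q\restriction A\in S_*(A)$ and (2)~$q$ satisfies the defining clause of ``stationarization of $q\restriction A$''. Step~(2) is immediate from the definitions plus Lemma~\ref{station}(ii), and you handle it cleanly, though it can be stated even more directly: since the defining formulas $\Theta_\psi(\bar y,\bar b_\psi)$ have parameters in $A$ and define $q$, they also define $q\restriction A$, so $q$ literally meets the definition of a stationarization of $q\restriction A$ over $B$; uniqueness then makes $q$ \emph{the} stationarization. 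Step~(1) is the only substantive point, and your argument for it is sound: route through Remark~\ref{8.5}, pull a $P$-witness $\bar m'$ from $B$ using $q\in S_*(B)$, convert the condition $\psi(\,\cdot\,,\bar a,\bar c)$ on $B$-tuples into a condition $\vartheta$ over $A$ using the $A$-definability of $q$ and the fact that $\bar c\models q$, and then use completeness of $A$ (which holds because stable sets are complete by Definition~\ref{6}(iv)) to produce a witness $\bar m\subseteq P\cap A$. This is precisely where the hypothesis ``definable over $A$'' is needed, and you identify that correctly. No gap.
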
 
}

The following example illustrates the importance of the condition $A \subseteq_t B$ in the definition of the stationarization (and in Lemma \ref{station}(i)). 

\begin{example}
	In \cite{Chatzidakis2020RemarksAT} Chatzidakis explores the theory $T=ACFA_0$ over $P = Fix(\sigma)$ and proves that for all uncountable $\lam$, if $A$ is a substructure with $P^M$ $\lam$-saturated, there exists a $\lam$-primary model $N$ of $T$ over $A$. This result immediately implies (by e.g. Theorem \ref{stable}) that $ACFA_0$ is stable over $P$. In this example, in the construction of the primary model over $A$, one has to address the following situation: $A \subseteq B$, $B^P = M^P$, $B$ is complete and stable (this is not specifically stated in  \cite{Chatzidakis2020RemarksAT}, but is a posteriori clear, since the construction in particular yields a $\lam$-primary model $N$ over $B$), $d \in B$, and the type (extending) the difference equation $\sigma(x) = d\cdot x$ is not realized in $B$. Clearly (by e.g. Lemma \ref{le:typeextension}), this types extends to $p \in S_*(B)$, and one has to realize $p$ in order to complete the construction of $N$. 
	
	Now consider $a \models p$. One may ask: can we have $b \in N$ such that $b\models p|Ba$ (so $a, b \models p$ such that $a\ind_B b$)? The answer is clearly no, since in this case $\frac{a}{b}$ would be a new element in $P$. So why does $N$ not realize the stationarization of $p$ to $Ba$? The issue is that if $B$ does not realize $p$, then $B \not\subseteq_t Ba$. And indeed in this case such a stationarization does not exist. 
%
\end{example}

\red{
Let us make a few further remarks. 

\begin{remark} 
\begin{enumerate}
\item
	If $A \subseteq B \subseteq C$ and $A \subseteq_t C$, then $A\subseteq_t B$. In particular, if $M$ is a model, then $M \subseteq_t C$ for any $C \supseteq M$. So $p \in S_*(M)$ has a (unique) stationarization over any superset. 
\item
	If, under the assumptions of \emph{(i)}, $q$ is the stationarization of $p \in S_*(A)$ over $C$ (so in particular $A$ is stable), and $B$ is stable, then $q\rest B$ is the stationarization 
	of $p$ over $B$.
\item
	If $A \subseteq_t B$, $tp(ab/B)$ is the stationarization of $tp(ab/A) \in S_*(A)$, and $Ab$ is stable, then $tp(a/Bb)$ is the stationarization of $tp(a/Ab)$
\item
	In the previous clause, if $b$ is finite, then the assumption on $Ab$ is redundant, that is, it follows from the other assumptions. 
\end{enumerate}
\end{remark}
\begin{proof} Easy.  For clause (iv), note that since $tp(ab/A) \in S_*(A)$, the set $Ab$ is complete; and since $A$ is stable and $b$ is finite,  $Ab$ is stable as well. 
%
\end{proof}
}

\red{
\begin{lem}\label{mapextension}
	Let $A,B,C$ be sets such that $A\subseteq_t B$, $A\subseteq C$, 
	$C\ind_A B$ (see Notation \ref{no:ind}). Let $F$ be an elementary map from $B$ onto $B'$, $G$ be an elementary map from $C$ onto $C'$ such that $F\rest A = G\rest A$. Then $F \cup G$ is elementary.
\end{lem}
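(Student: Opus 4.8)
The statement asserts that if $A\subseteq B$, $A\subseteq C$, and $\mathrm{tp}(C/B)$ is the stationarization of $\mathrm{tp}(C/A)$, then the union $F\cup G$ of two elementary maps agreeing on $A$ is elementary. Since $T$ has quantifier elimination and no function symbols (Hypothesis~\ref{asm:1}), ``elementary'' reduces to preserving quantifier-free formulas, and it suffices to check: for every quantifier-free $\varphi(\bar x,\bar y)\in L(T)$, every tuple $\bar b\subseteq B$ and every tuple $\bar c\subseteq C$, we have $\models\varphi(\bar b,\bar c)$ iff $\models\varphi(F(\bar b),G(\bar c))$. The plan is to read this as a statement about a single formula $\psi(\bar x,\bar b):=\varphi(\bar x,\bar b)$ of the variables realized by $\bar c$: namely, $\psi(\bar x,\bar b)\in\mathrm{tp}(\bar c/B)$ iff $\psi(\bar x,F(\bar b))\in\mathrm{tp}(G(\bar c)/B')$.

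\textbf{Key steps.} First I would invoke Lemma~\ref{station} (together with the remark following Corollary~\ref{20} on monotonicity of stationarization) to note that $\mathrm{tp}(\bar c/B)$, being (a restriction of) the stationarization of $\mathrm{tp}(\bar c/A)\in S_*(A)$, is \emph{definable over $A$}: for each $\psi(\bar x,\bar y)$ there is a single $\Psi_\psi(\bar y,\bar a_\psi)$ with $\bar a_\psi\subseteq A$ that defines both $\mathrm{tp}_\psi(\bar c/A)$ and $\mathrm{tp}_\psi(\bar c/B)$. Thus $\models\psi(\bar c,\bar b)$ iff $\models\Psi_\psi(\bar b,\bar a_\psi)$, and the latter is a formula with parameters in $A\cup B=B$. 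Second, applying the elementary map $F$ (which fixes $A$ pointwise, since $F\rest A=G\rest A$ is a map on $A$, and $G$ restricted to the finite tuple $\bar a_\psi$ must agree — more precisely $F(\bar a_\psi)=G(\bar a_\psi)$ by hypothesis): $\models\Psi_\psi(\bar b,\bar a_\psi)$ iff $\models\Psi_\psi(F(\bar b),F(\bar a_\psi))$ iff $\models\Psi_\psi(F(\bar b),G(\bar a_\psi))$. Third, I claim $\Psi_\psi(\bar y,G(\bar a_\psi))$ is a definition of $\mathrm{tp}_\psi(G(\bar c)/B')$: indeed $G$ is elementary, so $\mathrm{tp}(G(\bar c)/G[A])=G(\mathrm{tp}(\bar c/A))$, and since $\Psi_\psi(\bar y,\bar a_\psi)$ defines $\mathrm{tp}_\psi(\bar c/A)$, its $G$-image $\Psi_\psi(\bar y,G(\bar a_\psi))$ defines $\mathrm{tp}_\psi(G(\bar c)/G[A])$; but one must also check this definition works over all of $B'$. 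For this last point I would use that $G[A]=F[A]\subseteq F[B]=B'$ and that a definition of a $*$-type over a stable set extends canonically to a stationarization over any $\subseteq_t$-superset — so $\Psi_\psi(\bar y,G(\bar a_\psi))$ necessarily defines the unique stationarization of $\mathrm{tp}(G(\bar c)/G[A])$ over $B'$, which, applying $G$ to the hypothesis that $\mathrm{tp}(C/B)$ is the stationarization of $\mathrm{tp}(C/A)$, is exactly $\mathrm{tp}(G(\bar c)/B')$. Chaining the three equivalences gives $\models\varphi(\bar b,\bar c)$ iff $\models\varphi(F(\bar b),G(\bar c))$, as required.

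\textbf{Main obstacle.} The delicate point is the symmetric handling of the parameters: the definition scheme lives ``over $A$'', but after applying $F$ and $G$ separately the relevant parameters $\bar a_\psi$ get sent to the common value $F(\bar a_\psi)=G(\bar a_\psi)$, and one needs the stationarization on the target side to be \emph{canonically determined} by its restriction to $G[A]$ — i.e. genuine uniqueness of stationarization (Lemma~\ref{station}(ii)) applied after transporting the whole configuration by $G$. A secondary care point is that $A$ need not be a model, only $A\subseteq B$ (so $A\subseteq_t B$ need not hold in general!) — however we do not need $A\subseteq_t B$ here, because $\mathrm{tp}(C/B)$ being a \emph{stationarization} of $\mathrm{tp}(C/A)$ already presupposes $A\subseteq_t B$ in the sense of the definition, so this hypothesis is built in. I would state that at the outset to avoid confusion.
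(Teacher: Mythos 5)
Your plan is sound through the computation that $\models\psi(\bar c,\bar b)\iff\models\Psi_\psi(F(\bar b),G(\bar a_\psi))$, and your observation that $\Psi_\psi(\bar y,G(\bar a_\psi))$ defines $tp_\psi(G(\bar c)/G[A])$ is correct. The gap is in the very next claim: that $\Psi_\psi(\bar y,G(\bar a_\psi))$ defines $tp_\psi(G(\bar c)/B')$, which you justify by saying that $tp(G(\bar c)/B')$ is the stationarization of $tp(G(\bar c)/A')$, ``applying $G$ to the hypothesis that $tp(C/B)$ is the stationarization of $tp(C/A)$.'' This transfer does not go through. The map $G$ has domain $C$, not $B$; you cannot ``apply $G$'' to $B$. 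If you extend $G$ to an automorphism $\hat G$ of $\mathcal C$, you do learn that $tp(G(\bar c)/\hat G(B))$ is the stationarization of $tp(G(\bar c)/A')$, but $\hat G(B)$ has no reason to equal $B'=F(B)$: the only constraint linking $F$ and $G$ is that they agree on $A$, and $B'$ is determined by $F$ alone. So the type $tp(G(\bar c)/B')$ is a priori \emph{some} extension of $tp(G(\bar c)/A')$ to $B'$, but not necessarily the stationarization --- and establishing that it is the stationarization is essentially the whole content of the lemma, so this cannot be assumed.

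The paper's proof is structured precisely to avoid this assumption. Rather than asserting directly that $tp(C'/B')$ is a stationarization, it \emph{constructs} a set $C''$ realizing the stationarization of $tp(C'/A')$ over $B'$, together with an $A'$-elementary map $G':C'\to C''$. The definability computation (in essence, the same chain of equivalences you wrote down) is then carried out with $C''$ in place of $C'$, where the needed stationarization property holds by construction; this yields that $F\cup(G'\circ G)$ is elementary. The remaining step is to convert $F\cup(G'\circ G)$ into $F\cup G$ via $(G')^{-1}$. Your argument skips the auxiliary set entirely and thus asserts, without proof, the very stationarization fact that the auxiliary construction exists to supply. If you want to rescue your route you would have to add $tp(C'/B')$ being a stationarization of $tp(C'/A')$ as an extra hypothesis, but then the lemma follows almost immediately from uniqueness of stationarizations and is too weak for the paper's applications.
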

\begin{proof}
	Let $A' = F(A) = G(A)$. Let $C''$ be such that $tp(C''/B)$ is the stationarization of $tp(C'/A)$. Then clearly there is an elementary map $G'$ such that $G'(C') = C''$, and $G'' \rest A' $ is the identity.  First we claim that $F \cup (G' \circ G)$ is elementary. 
	
	Indeed, let $\c \in C$. Then for every $\ph(\x,\y)$ there exists $\d \in A$ such that $\Psi_\ph(\y,\d)$ defines the $\ph$-type of $\c$ over $A$. In other words, $\ph(\c,\a)$ if and only if $\Psi_\ph(\a,\d)$ for all $\a \in A$. Then (since $G$ is elementary) we have $\ph(\c',\a')$ if and only if $\Psi_\ph(\a',\d')$ for all $\a' \in A'$, where $\c'\d' = G(\c\d)$. Hence (since $G'$ is elementary) we have $\ph(\c'',\a')$ if and only if $\Psi_\ph(\a',\d')$ for all $\a' \in A'$, where $\c'' = G'(c)$. Recalling that $G'(\a'\d) = \a'\d$, we get: 
	
	\[ \ph(\c'',\a'') \iff \Psi_\ph(\a'',\d'') \] for all $\a'' \in A''$, where $\c''\d'' = G'\circ G(\c\d)$.
	
	In other words, $tp(\c''/A')$ is definable. Since $tp(c''/B')$ is the stationarization of $tp(\c''/A')$, by Lemma \ref{20}, the same definition works for since $tp(c''/B')$. Hence for all $\b'' \in B'$, letting (as before) $\c''\d'' = G'\circ G(\c\d)$
	
	\[ \ph(\c'',\b'') \iff \Psi_\ph(\b'',\d'') \]
	
	In particular, the equivalence above holds for $\b'' = F(\b)$. Recall that since $\d \in A$, we have $F(\d) = G(d) = G'\circ G(\d)$. Hence \[ \ph(\c'',\b'') \iff \Psi_\ph(\b'',\d'') \iff \Psi_\ph(F(\b),F(\d)) \]
	(the rightmost equivalence holds since $F$ is elementary). Now, by the choice of $\Psi_\ph$, we also have 
	\[ \ph(G'\circ G(c),F(b)) = \ph(\c'',\b'') \iff \Psi_\ph(F(\b),F(\d)) \iff \ph(b,d) \]
	
	which proves that $F \cup (G' \circ G)$ is elementary. 
	
	Now clearly \[F\cup G = (G')^{-1}\circ\left[F \cup (G' \circ G)\right] \] is also elementary, and we are done.
	
\end{proof}
}

\red{
\begin{lem}\label{isolatedimplies} 
 Let $A$ be $\lam$-saturated and stable, $A \subseteq_t B$, $N$ a $\lam$-saturated model $\lam$-atomic over $A$ such that 
$N \ind_A B$.
 
 Then $tp(N/A) \vdash tp(N/B)$; so the types $tp(N/A)$ and $tp(B/A)$ are weakly orthogonal.  
\end{lem}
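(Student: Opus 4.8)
The plan is to reduce the statement to Lemma~\ref{mapextension}. First I would unwind what $tp(N/A)\vdash tp(N/B)$ means: viewing $N$ as a (long) tuple of variables, it asserts that every realization of $tp(N/A)$ also realizes $tp(N/B)$, equivalently that for every elementary map $G\colon N\to N'$ with $G\restriction A=\mathrm{id}_A$ the map $\mathrm{id}_B\cup G$ is (well-defined and) elementary. We may harmlessly assume $N\cap B=A$, so that $\mathrm{id}_B\cup G$ is a function; otherwise the elements of $(N\cap B)\setminus A$ are pinned down by $tp(\cdot/B)$ and the statement is read with that understanding. So the whole content is that such $G$ are elementary over $B$.

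To obtain this I would feed Lemma~\ref{mapextension} the data $A\subseteq B$, $A\subseteq C:=N$, $F:=\mathrm{id}_B$ (an elementary map of $B$ onto itself, agreeing with $G$ on $A$ since both are the identity there), and $G$ as above. The hypothesis of Lemma~\ref{mapextension} that $tp(C/B)$ be a stationarization of $tp(C/A)$ is exactly the standing assumption of our lemma, read at the level of finite subtuples (``for every $\bar c\subseteq N$, $tp(\bar c/B)$ is the stationarization of $tp(\bar c/A)\in S_*(A)$''); and the proof of Lemma~\ref{mapextension} only ever invokes the stationarization property on \emph{finite} subtuples of $C$, so it applies verbatim with $C=N$ infinite. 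Also $A\subseteq_t B$ and $A\subseteq_t N$ hold automatically because $A$ is $\lambda$-saturated (Lemma~\ref{saturated-t}), so ``stationarization over $B$'' is meaningful. Lemma~\ref{mapextension} then yields that $\mathrm{id}_B\cup G$ is elementary, i.e.\ $tp(N/A)\vdash tp(N/B)$. The weak orthogonality of $tp(N/A)$ and $tp(B/A)$ is a formal consequence: given aligned realizations $N_1\models tp(N/A)$ and $B_1\models tp(B/A)$, pick $\sigma\in\mathrm{Aut}(\mathcal{C}/A)$ sending $B$ to $B_1$; then $\sigma^{-1}(N_1)\models tp(N/A)$, hence $\sigma^{-1}(N_1)\models tp(N/B)$, so $tp(N_1/B_1)=\sigma(tp(N/B))$ depends only on $B_1$, and thus $tp(N/A)\cup tp(B/A)$ determines $tp(NB/A)$.

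This is presumably also where the hypothesis that $N$ is $\lambda$-primary over $A$ (hence $\lambda$-atomic, carried by a construction $N=\{d_i:i<\delta\}$ with each $tp(d_i/A\cup\{d_j:j<i\})$ $\lambda$-isolated, such $N$ being produced by Theorem~\ref{stable}) is meant to play a role: one can instead give a direct inductive proof that $tp(d_{<i}/A)\vdash tp(d_{<i}/B)$ for all $i\le\delta$. Limit steps are immediate (a type is the union of its restrictions to finite subtuples); at a successor step one uses that $tp(d_{<i}d_i/B)$ is the stationarization of $tp(d_{<i}d_i/A)\in S_*(A)$, together with the Remark following Lemma~\ref{20} (passing to $tp(d_i/B\cup d_{<i})$ being the stationarization of $tp(d_i/A\cup d_{<i})$) and the uniqueness of stationarizations (Lemma~\ref{20}(ii)), which forces the $\lambda$-isolated type $tp(d_i/A\cup d_{<i})$ to have a unique extension compatible with the inductive hypothesis. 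I expect the main obstacle, in either route, to be precisely the finite-to-infinite passage: in the first route, checking that Lemma~\ref{mapextension} genuinely survives replacing a small set $C$ by an arbitrary one (it does, since its argument is tuple-local); in the second, the bookkeeping at limit stages and the need that $A\cup d_{<i}$ be stable so that the cited Remark applies, which holds since $A\cup d_{<i}$ is a complete subset of $N$ with $P\cap(A\cup d_{<i})=P\cap A$ and $N$ itself is a stable set by Hypothesis~\ref{asm:2}. The remaining verifications are routine.
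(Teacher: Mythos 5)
Your primary route, via Lemma~\ref{mapextension}, does not work, and the telling symptom is that it would make no use of the hypothesis that $N$ is $\lambda$-primary over $A$. With $F=\mathrm{id}_B$ and $G$ an arbitrary elementary map fixing $A$, that lemma would give $tp(N/A)\vdash tp(N/B)$ for \emph{any} $N$ with $tp(N/B)$ the stationarization of $tp(N/A)$. But a stationarization over a model is typically a proper extension that is not implied by its restriction: take $B=A\cup\{b\}$, $N=A\cup\{c\}$ with $b,c$ independent realizations of the same nonalgebraic type over $A$; then $tp(c/B)$ is the stationarization of $tp(c/A)$ and $G\colon c\mapsto b$ is elementary over $A$, yet $\mathrm{id}_B\cup G$ identifies $b$ and $c$, so it is not elementary. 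So Lemma~\ref{mapextension} as written is too strong. Its own proof in fact establishes elementarity only for $F\cup(G'\circ G)$, i.e.\ only under the extra condition that $tp(C''/B')$ be the stationarization of $tp(C''/A')$ (the final step $(G')^{-1}\circ[F\cup(G'\circ G)]$ is not a well-defined composition, since $(G')^{-1}$ is not defined on $B'$). That corrected form does cover the paper's application in Corollary~\ref{33}(ii), where both target systems are good, but it does not cover your $F=\mathrm{id}_B$ application, precisely because the extension $tp(G(N)/B)$ of $tp(N/A)$ is then arbitrary.

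Your inductive Route~2 points at the right mechanism — the $\lambda$-isolation supplied by the $\lambda$-primary construction — but elides the decisive step. ``Uniqueness of stationarizations'' says only that the \emph{stationarization} extending $tp(d_i/A\cup d_{<i})$ is unique; it says nothing about other extensions to $B\cup d_{<i}$, which is exactly what must be ruled out. The paper rules them out directly: fix finite $\bar c\in N$ and $\bar c'$ with $tp(\bar c/A)=tp(\bar c'/A)$, suppose $\varphi(\bar z,\bar b)$ ($\bar b\in B$) distinguishes them, let $\Theta(\bar z)$ be a type over a subset of $A$ of size $<\lambda$ isolating $tp(\bar c/A)$, and form
\[
\pi(\bar y)=\bigl\{\exists\bar z\bar z'\,[\theta(\bar z)\wedge\theta(\bar z')\wedge\varphi(\bar z,\bar y)\wedge\neg\varphi(\bar z',\bar y)]\colon\theta\in\Theta\bigr\}.
\]
Then $\bar b\models\pi$, so by Lemma~\ref{saturated-t}(ii) — this is where the $\lambda$-saturation of $A$ and $A\subseteq_t B$ (which is built into the definition of stationarization) enter — some $\bar a\in A$ realizes $\pi$; and then $\lambda$-saturation of $N$ yields $\bar c_1,\bar c_2\in N$ realizing $\Theta$ but disagreeing on $\varphi(\cdot,\bar a)$, contradicting that $\Theta$ isolates a complete type over $A$. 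Both instances of $\lambda$-saturation are load-bearing; neither is subsumed by an appeal to uniqueness of stationarizations or to Lemma~\ref{mapextension}.
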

\begin{proof}
	Let $\c \in N$ and $\c' \in \cC$ such that $tp(\c/A) = tp(\c'/A)$. Assume towards contradiction that for some formula $\ph(\z,\b)$ (with $\b \subseteq B$) so that $\models\ph(\c,\b)\land\neg\ph(\c',\b)$. 
	
	The type $tp(\c/A)$ is isolated by $\Theta(\z)$, a partial type over a subset of $A$ of cardinality less than $\lam$. 
	
	Consider the following partial type:
	
	\[ \pi(\y)  = \left\{ \exists \z\z' \left[\theta(\z)\land\theta(\z') \land \ph(\z,\y)\land\neg\ph(\z',\y) \right]\colon\; \theta \in \Theta \right\} \]
	
	It is realized by $\b \in B$, hence, by \ref{saturated-t}(ii), it is also realized by some $\a \in A$. 
	
	Now consider the following type:
	
	\[ \left\{ \theta(\z)\land\theta(\z') \land \ph(\z,\a)\land\neg\ph(\z',\a) \colon \; \theta \in \Theta \right\} \]
	
	It is finitely satisfiable in $N$ (precisely because $\a \models \pi(\y)$), and since $N$ is $\lam$-saturated, it is realized by some $\c_1,\c_2$ in $N$ (recall that $\Theta$ is over a ``small'' set). But $\Theta(\z)$ implies a complete type over $A$; a contradiction.

\end{proof}
}



\section{Main consequences}

\begin{theorem} \label{22}\underline{(Stable Amalgamation for models).} If $M_l, l=1,2$ is saturated
of power $\lambda$ 
(or just $P^{M_0}$ is saturated), $P^{M_l}\subseteq M_0\prec M_l$, then we can find
$M\supseteq M_0\supseteq P^M$ and elementary embeddings $f_l$ of
$M_l$ into $M$ over $M_0$ such that $tp(\bar c/f_2(M_2))\in S_*(f_2(M_2))$ for all
$\bar c\in f_1(M_1)$, and, moreover it is the stationarization of $tp(\bar
c/M_0)$ over $f_2(M_2)$; that is, $f(M_1) \ind_{M_0} f(M_2)$. 

\red{If $\lam = \lam^{<\lam}$, then $M$ can be chosen to be saturated.}
\end{theorem}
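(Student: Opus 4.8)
The goal is to amalgamate $M_1$ and $M_2$ over $M_0$ so that the copy of $M_1$ is independent (in the sense of stationarization) from the copy of $M_2$. The natural strategy is to fix the copy $f_2 = \mathrm{id}$, so $M_2$ sits inside the monster, and then build inside $\mathcal{C}$ an isomorphic copy $f_1(M_1)$ of $M_1$ over $M_0$ whose type over $M_2$ is as prescribed. First I would enumerate $M_1 = M_0 \cup \{a_i : i < \lambda\}$ (more precisely, enumerate $M_1 \setminus M_0$), and construct elements $\{b_i : i < \lambda\}$ of $\mathcal{C}$ by induction so that, writing $B_i = M_0 \cup \{b_j : j < i\}$ and $A_i = M_0 \cup \{a_j : j < i\}$, at each stage $tp(b_i / M_2 \cup B_i)$ is the stationarization of a type over $M_0 \cup B_i$ that matches $tp(a_i / A_i)$ under the partial isomorphism sending $a_j \mapsto b_j$. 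The point of routing everything through $M_0$ is Remark following Lemma \ref{20}: since $M_0$ is a model, $M_0 \subseteq_t \mathcal{C}$, so every type in $S_*(M_0)$ has a unique stationarization over any superset; and since $M_0 \prec M_1$, all the relevant types over $M_0$ lie in $S_*(M_0)$ (realizing elements of $M_1$ over $M_0$ does not enlarge $P$, because $P^{M_1} \subseteq M_0$).

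The inductive step requires two things working together. For coherence of the partial isomorphism, I would invoke Lemma \ref{mapextension}: having built an elementary map $F \cup G_i$ where $F = \mathrm{id}_{M_0}$ and $G_i : A_i \to B_i$, and wanting to extend $G_i$ by $a_i \mapsto b_i$, it suffices to choose $b_i$ so that the image type is a stationarization over $M_0$ of the (image of the) type $tp(a_i/M_0)$ — the lemma then guarantees compatibility with $M_2$ as well, provided $tp(M_2 / B_i)$ is a stationarization of $tp(M_2/M_0)$, which I would maintain as a parallel invariant (or rather its symmetric form). Concretely: at stage $i$, the type $q_i := $ stationarization of $G_i$-image of $tp(a_i / A_i)$ over $M_2 \cup B_i$ exists by Lemma \ref{station}(i) applied to the stable set $M_0$ (after pushing to $M_0 \cup B_i$, also complete, using that $M_0 \cup B_i$ is complete since adding realizations of $S_*$-types over a complete set keeps completeness, and using stability of $M_0 \cup B_i$, which holds by finiteness of the additions at successor stages and Fact \ref{7}(ii) at limits — here Hypothesis \ref{asm:2} that every model is stable gets used to start the induction). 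Let $b_i \models q_i$. Taking unions at limit stages is fine by Fact \ref{complete}(ii) and the coherence of stationarizations (Remark after Lemma \ref{20}, clause (ii)).

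At the end, set $f_1(M_1) = M_0 \cup \{b_i : i < \lambda\}$, which is an elementary copy of $M_1$ over $M_0$ by the union of the $G_i$'s (an elementary bijection). For the conclusion that $tp(\bar c / M_2) \in S_*(M_2)$ for $\bar c \in f_1(M_1)$, and is the stationarization of $tp(\bar c / M_0)$: for a finite $\bar c$ this follows from the construction together with Lemma \ref{station}(iii) — $M_2$ is complete (it is a model) and $tp(\bar c / M_2)$ is the stationarization over $M_2$ of $tp(\bar c / M_0) \in S_*(M_0)$, hence lies in $S_*(M_2)$ — plus the transitivity-type remarks to see that the stationarization built stagewise over the $B_i$'s restricts correctly. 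Finally, to get $M \supseteq M_0$ with $P^M \subseteq M_0$: apply Proposition \ref{approx} to the complete set $M_2 \cup f_1(M_1)$ (it is complete — both pieces are complete, $P$ has not grown, and one checks completeness of the union via Remark \ref{8.5} using $f_1(M_1)$-types being stationarizations of $S_*$-types and $M_2$-types being handled symmetrically), noting $P^{M_2}$ is saturated, to obtain $M \prec \mathcal{C}$ with $M_2 \cup f_1(M_1) \subseteq M$; and if $\lambda = \lambda^{<\lambda}$ the ``$M$ saturated'' clause of Proposition \ref{approx} applies directly. The main obstacle I anticipate is bookkeeping: maintaining simultaneously that (a) $M_0 \cup B_i$ stays stable and complete, (b) $tp(M_2 / M_0 \cup B_i)$ remains a stationarization so that Lemma \ref{mapextension} is applicable, and (c) the stagewise stationarizations cohere into a single stationarization of the whole of $tp(f_1(M_1)/M_0)$ over $M_2$ — i.e., verifying the symmetry-of-independence that is implicitly needed here. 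If the symmetry statement ``$tp(\bar c / M_2)$ is a stationarization of $tp(\bar c/M_0)$ for all $\bar c \in f_1(M_1)$'' does not follow formally from the one-sided construction, one may need to set it up as an explicit back-and-forth or invoke a symmetry lemma; since the paper announces symmetry of stationarization over models as a main result proven in this section, I would expect it to be available (or provable in tandem).
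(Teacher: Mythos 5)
Your proposal takes a genuinely different route from the paper's, and the point you flag at the end as the ``main obstacle'' is in fact a real gap, not just bookkeeping.

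The paper's proof does not build $f_1$ by transfinite induction on an enumeration of $M_1\setminus M_0$. Instead it defines $f_1$ in one step: for every finite $\bar c\subseteq M_1$, let $q_{\bar c}$ be the stationarization over $M_2$ of $tp(\bar c/M_0)\in S_*(M_0)$ (which exists and is unique by Lemma~\ref{station}, using $M_0\subseteq_t M_2$). The Corollary of proof following Lemma~\ref{station} says exactly that if $\bar c\subseteq\bar d$ then the stationarization of $tp(\bar d/M_0)$ over $M_2$ restricts to the stationarization of $tp(\bar c/M_0)$ over $M_2$; so $\{q_{\bar c}\colon \bar c\subseteq M_1 \text{ finite}\}$ is a compatible directed system, hence realized simultaneously by some elementary $f_1$ with $f_1\restriction M_0=\mathrm{id}$. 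Completeness of $M_2\cup f_1(M_1)$ then follows from Lemma~\ref{station}(iii) (applied tuple by tuple) and Fact~\ref{complete}(i), and Proposition~\ref{approx} produces $M$. No ``chain rule'' for stationarization and no symmetry is needed.

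Your construction stationarizes at each stage over the \emph{growing} base $M_0\cup B_i$, and this is where the trouble starts. First, to invoke Lemma~\ref{mapextension} at stage $i$ you need to maintain that $tp(M_2/M_0\cup B_{i+1})$ is a stationarization of $tp(M_2/M_0)$, but what you constructed gives control over $tp(b_i/M_2\cup B_i)$, i.e.\ the opposite direction; passing from one to the other is precisely a symmetry statement. Second, even granting that, you still need the stagewise stationarizations to cohere into a single stationarization of $tp(f_1(M_1)/M_0)$ over $M_2$, which is a transitivity/chain-rule property of stationarization that the paper never proves (the remark after Lemma~\ref{station} gives only the global-to-local direction, clause (iii), not the converse). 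You candidly note this and suggest falling back on the Symmetry Theorem; but Symmetry is Theorem~\ref{symmetry}, proved \emph{after} Stable Amalgamation, so invoking it here would invert the paper's logical order (and, in any case, symmetry alone does not give you the chain rule). The moral of the paper's argument is that you should stationarize each finite $\bar c\subseteq M_1$ over the \emph{fixed} base $M_0$ rather than over a growing chain: then compatibility of the $q_{\bar c}$ is immediate from uniqueness plus restriction, and no symmetry or transitivity is required.

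Your final step (completeness of $M_2\cup f_1(M_1)$ and appeal to Proposition~\ref{approx}) is essentially the same as the paper's, and the observation that $M_0\subseteq_t\mathcal{C}$ (so stationarizations over $M_2$ exist) is the right starting point. Replacing the transfinite construction by the direct definition of $f_1$ closes the gap.
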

\begin{proof}
We can find an elementary mapping $f_1$ from $M_1$ to ${\cal C}$ such
that $f_1|_{M_0}=id$ and  for all $\bar c\subseteq M_1$,
$tp(f_1(\bar c)/M_2)$ is the stationarization of $tp(\bar c/M_0)$:
\red{Since for $\bar c\in M_1, P^{M_1}\subseteq M_0\cup\bar c\subseteq M_1$, $M\cup \c$ is complete, hence 
$tp(\bar c/M_0)$ is in $S_*(M_0)$, and by Lemma \ref{20} has a stationarization $q_{\bar
c}$ over $M_2$ ($M_0 \subseteq_t M_2$, of course)}. By the previous corollary all these types $q_{\bar c}$
are compatible (being \red{a directed system}), so we can define $f_1$ as an elementary
map so that $f_1|_{M_0}=id, dom f_1=M_1$ and $f_1(\bar c)$
realizes $q_{\bar c}$: so for $c_1,\dots ,c_n\in M_1$, $M_2\cup
\{f_1(c_1),\dots ,f_1(c_n)\}$ is complete, $P\cap
(M_2\cup\{f_1(c_1),\dots ,f_1(c_n)\})=P\cap M_2$ by Lemma
\ref{station} (iii). Hence by Fact \ref{7}(i) $M_2\cup f_1(M_1)$ is
complete, $P\cap (M_2\cup \{f_1(c):c\in M_1\})=P\cap M_2=P\cap M_0$.
As $P\cap (M_2\cup f_1(M_1))$ is saturated of power $\lambda$, by
Proposition \ref{approx} there is some $M$ with $M_2\cup f_1(M_1)\subseteq M$ and
$P^M=P^M\cap (M_2\cup f_1(M_1))=P^{M_0}$ as required.

If $\lam = \lam^{<\lam}$, then in Proposition \ref{8}, $M$ can be chosen to be saturated. 
\end{proof}

We can now deduce amalgamation over stable sets with a saturated $P$-part. 

\begin{theorem}\label{23}\label{amalgamation}\underline{(Stable Amalgamation for types over stable sets).} Let $A$ be stable and saturated (\red{or just $A\cap P$ is $|A|$-compact}).
If $|T|<\lambda^{<\lambda}=\lambda=|A|$, then we have amalgamation in
$S_*(A)$. That is, if $tp(\bar a\bar b/A)\in S_*(A), tp(\bar
a\bar c/A)\in S_*(A), \bar a,\bar b,\bar c$ of length $<\lambda$, then
for some $\bar a^\prime, \bar b^\prime, \bar c^\prime, tp(\bar
a^\prime\bar b^\prime/A)=tp(\bar a\bar b/A), tp(\bar a^\prime\bar
c^\prime/A)=tp(\bar a\bar c/A)$ and $tp(\bar a^\prime\bar b^\prime\bar
c^\prime/A)\in S_*(A)$.
\end{theorem}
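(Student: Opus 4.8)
The plan is to reduce amalgamation in $S_*(A)$ to the stable amalgamation theorem (Theorem \ref{22}) via the map-extension lemma (Lemma \ref{mapextension}), working inside a big ambient model. First I would realize the given data: let $\bar a, \bar b$ realize a type with $tp(\bar a\bar b/A) \in S_*(A)$, and let $\bar a_1, \bar c$ realize $tp(\bar a\bar c/A) \in S_*(A)$ with $tp(\bar a_1/A) = tp(\bar a/A)$; by applying an automorphism of $\cC$ fixing $A$ (since $tp(\bar a/A) = tp(\bar a_1/A)$) I may assume $\bar a_1 = \bar a$, so I have $\bar a\bar b$ and $\bar a\bar c$ with both $tp(\bar a\bar b/A)$ and $tp(\bar a\bar c/A)$ in $S_*(A)$. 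Since $A$ is saturated (or $A\cap P$ is $|A|$-compact) of cardinality $\lambda = \lambda^{<\lambda}$, Proposition \ref{approx} gives models $M_1, M_2$ with $P^{M_l} \subseteq A \subseteq M_l$, saturated of power $\lambda$, with $A\bar b \subseteq M_1$ and $A\bar c \subseteq M_2$ — here I use that $A\bar b$ and $A\bar c$ are complete (as $tp(\bar a\bar b/A), tp(\bar a\bar c/A) \in S_*(A)$ forces completeness, and in particular $P\cap(A\cup\bar b) = P\cap A = P\cap(A\cup\bar c)$), and I first need to absorb $\bar a$ as well, so really I take $M_1 \supseteq A\bar a\bar b$ and $M_2 \supseteq A\bar a\bar c$; since the $P$-parts coincide with $P\cap A$, Proposition \ref{approx} applies.

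Next I apply stable amalgamation over $A$, but Theorem \ref{22} is stated for amalgamating two \emph{models} over a common submodel $M_0$. So the key is to note that $A$ itself need not be a model, so instead I amalgamate $M_1$ and $M_2$ in a way that respects $\bar a$. Concretely: by Theorem \ref{22} applied with $M_0$ replaced by a suitable model — actually the cleanest route is to build a saturated model $N_0$ with $P^{N_0} = P^A \subseteq A \subseteq N_0$ first, but since $\bar a$ must be preserved, a better route is to use the stationarization machinery directly. Let me instead proceed as follows: by Lemma \ref{station}, $tp(\bar a\bar b/A) \in S_*(A)$ has a unique stationarization over $M_2$ (noting $A \subseteq_t M_2$, indeed $A \subseteq M_2$ with $M_2$ a model gives $A\subseteq_t M_2$). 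Realize this stationarization by $\bar a'\bar b'$ in $\cC$; then $tp(\bar a'\bar b'/A) = tp(\bar a\bar b/A)$ and $tp(\bar a'\bar b'/M_2) \in S_*(M_2)$ is the stationarization of $tp(\bar a'\bar b'/A)$. In particular $tp(\bar a'/M_2)$ is the stationarization of $tp(\bar a'/A) = tp(\bar a/A)$ over $M_2$. Now I want to glue the copy of $\bar a\bar c$ sitting inside $M_2$ onto $\bar a'\bar b'$ over $A$; the matching point is $\bar a$ versus $\bar a'$. Apply Lemma \ref{mapextension} with $A := A$, $B := M_2$ (so $tp(\bar a'\bar b'/M_2)$ being the stationarization of $tp(\bar a'\bar b'/A)$ is exactly the hypothesis, once I also know $\bar a' \in$ the domain — I should take $B = M_2$, $C = A\bar a'\bar b'$, with $A \subseteq C$ via $\bar a' \equiv_A \bar a$; but $\bar a' \notin A$ in general, so I set $C = A\cup\{\bar a'\bar b'\}$, $tp(C/M_2)$ a stationarization of $tp(C/A)$), and $F = \mathrm{id}_{M_2}$, $G$ the elementary map sending $A\bar a'\bar b'$ to $A\bar a\bar b$ fixing $A$. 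Then $F\cup G$ is elementary: it extends $tp(\bar a\bar b/A)$-data on $M_2$ compatibly, so there is an elementary copy of $M_2$ — call it $M_2'$ — with $A\bar a'\bar b' \cup M_2' $ consistent as an elementary diagram, $M_2'\cap (A\bar a'\bar b')\supseteq A$ and $M_2'$ containing a copy $\bar c'$ of $\bar c$ with $tp(\bar a'\bar c'/A) = tp(\bar a\bar c/A)$.

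The output is then $\bar a' \bar b' \bar c'$: by construction $tp(\bar a'\bar b'/A) = tp(\bar a\bar b/A)$ and $tp(\bar a'\bar c'/A) = tp(\bar a\bar c/A)$, and I must finally check $tp(\bar a'\bar b'\bar c'/A) \in S_*(A)$. For this I use that $\bar b' \subseteq M_2'$ (wait — $\bar b'$ is \emph{not} in $M_2'$; $\bar b'$ came from stationarizing over $M_2$). Let me re-order: the correct endgame is that $\bar a'\bar c'$ lies in (a copy of) $M_2$, over which $tp(\bar a'\bar b'/M_2) \in S_*(M_2)$; hence $tp(\bar a'\bar b'/M_2) \in S_*(M_2)$ implies $P\cap(M_2 \cup \bar a'\bar b') = P\cap M_2 = P\cap A$, and $M_2\cup\bar a'\bar b'$ is complete, so in particular its subset $A\cup\bar a'\bar b'\bar c'$ has $P$-part equal to $P\cap A$ and — using Remark \ref{8.5} — any instance $\models\exists\x\in P\,\psi(\x,\bar e)$ with $\bar e$ from $A\bar a'\bar b'\bar c' \subseteq M_2\cup\bar a'\bar b'$ is witnessed in $P\cap(M_2\cup\bar a'\bar b') = P\cap A$; thus $tp(\bar a'\bar b'\bar c'/A)\in S_*(A)$ by Remark \ref{8.5}. \textbf{The main obstacle} I expect is the bookkeeping in setting up Lemma \ref{mapextension} correctly — in particular making sure the stationarization hypothesis ``$tp(C/B)$ is a stationarization of $tp(C/A)$'' is genuinely available for the triple $(A, M_2, A\cup\bar a'\bar b')$, which relies on $\bar a'\bar b'$ having been chosen \emph{as} the stationarization and on $A \subseteq_t M_2$ together with the ``moreover'' in the remark following Lemma \ref{station} (directed/monotonicity of stationarization), plus checking that $A\bar a'\bar b'$ is complete so that the completeness of $M_2\cup\bar a'\bar b'$ transfers down. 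Everything else is routine application of Proposition \ref{approx}, Lemma \ref{station}, and Remark \ref{8.5}.
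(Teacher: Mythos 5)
Your plan has a genuine gap at the application of Lemma \ref{mapextension}, and I'll explain why it is not a bookkeeping issue but a structural one.

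You set $F = \mathrm{id}_{M_2}$ and take $G$ to be the elementary map over $A$ sending $\bar a'\bar b' \mapsto \bar a\bar b$. But $\bar a$ lies in $M_2$ (you put $A\bar a\bar c \subseteq M_2$), while $\bar a'$ realizes the stationarization of $tp(\bar a/A)$ over $M_2$ and is therefore distinct from every tuple of $M_2$ — in particular $\bar a' \neq \bar a$ (the formula $\bar x \neq \bar a$ belongs to the stationarized type, since $\bar a \not\in A$). So $F\cup G$ sends both $\bar a$ (via $F$) and $\bar a'$ (via $G$) to $\bar a$. Consequently $F\cup G$ is not injective, hence cannot be elementary: $\models \bar a \neq \bar a'$ but $(F\cup G)(\bar a) = (F\cup G)(\bar a')$. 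Equivalently, Lemma \ref{mapextension} tacitly requires $B'\cap C' = F(A) = G(A)$, whereas in your setup $B'\cap C' = M_2 \cap A\bar a\bar b = A\bar a \supsetneq A$. The conclusion you want ("a copy $M_2'$ containing $\bar c'$ with $tp(\bar a'\bar c'/A)=tp(\bar a\bar c/A)$") does not follow. Also, as a minor point, $A\subseteq M_2$ with $M_2$ a model does \emph{not} by itself give $A\subseteq_t M_2$; what you actually need is the saturation (compactness) of $A$, via Lemma \ref{saturated-t}(ii).

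The paper's proof handles the shared $\bar a$ differently and this is exactly the step you are missing: by Theorem \ref{17}(ii) one first builds a $\lambda$-primary model $M_{\bar a}$ over $A\cup\bar a$ (with $P^{M_{\bar a}} \subseteq A$). Since $M_{\bar a}$ is $\lambda$-primary, it elementarily embeds over $A\cup\bar a$ into any $\lambda$-saturated model $M_{\bar b}\supseteq A\bar a\bar b$ and likewise into $M_{\bar c}\supseteq A\bar a\bar c$ (both supplied by Proposition \ref{approx}). Now Theorem \ref{22} (Stable Amalgamation) is applied with $M_0 := M_{\bar a}$, $M_1 := M_{\bar b}$, $M_2 := M_{\bar c}$, yielding $M$ with $P^M \subseteq A$ and embeddings $g_{\bar b}, g_{\bar c}$ fixing $M_{\bar a}$ — so in particular fixing $\bar a$. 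Then $\bar a{}^\frown g_{\bar b}(\bar b){}^\frown g_{\bar c}(\bar c) \subseteq M$, and completeness plus $P^M\subseteq A$ give $tp(\bar a\, g_{\bar b}(\bar b)\, g_{\bar c}(\bar c)/A)\in S_*(A)$. The crucial idea is that $\bar a$ belongs to the common base $M_{\bar a}$ over which amalgamation is performed, so it never has to be moved. Your plan tries to bypass the primary model and stationarize $tp(\bar a\bar b/A)$ over $M_2$ directly, which necessarily "un-shares" $\bar a$ and causes the clash above. If you want to stay close to your stationarization picture you could instead first place $M_{\bar a}\prec M_2$ and stationarize only $tp(\bar b/M_{\bar a})$ over $M_2$ (leaving $\bar a$ alone), but then you are essentially unfolding Theorem \ref{22} rather than avoiding it.
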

 \begin{proof} \red{Note that $P\cap A\a\b = P\cap A$ is saturated, $A\a\b$ is complete. Hence by Proposition \ref{8}} we can find a model $M_{\bar b},\lambda$-saturated of
cardinality $\lambda$ such that $A\cup\bar a\bar b\subseteq M_{\bar b}$,
and $P^{M_{\bar b}}\subseteq A$. Similarly, we can choose $M_{\bar
c},\lambda$-saturated of cardinality $\lambda$ with $A\cup\bar
a\bar c\subseteq M_{\bar c}$, and $P^{M_{\bar c}}\subseteq A$. By Theorem
\ref{17} (ii) there is a model $M_{\bar a}$ of cardinality 
$\lambda$, $A\cup\bar a\subseteq M_{\red{\bar a}}$,
and $P^{M_{\bar a}}\subseteq A$ such that $M_{\bar a}$ is $\lambda$-primary over
$A\cup \bar a$. Hence there is an elementary embedding $f_{\bar b}:
M_{\bar a}\rightarrow M_{\bar b}, f_{\bar b}|_{(A\cup \bar a)}=id$.
Similarly, there is an elementary embedding $f_{\bar c}:
M_{\bar a}\rightarrow M_{\bar c}, f_{\bar c}|_{(A\cup \bar a)}=id$.
By the previous theorem, there are elementary mappings $g_{\bar
b},g_{\bar c}$ and a model $M$ with $g_{\bar b}:M_{\bar b}\rightarrow
M,g_{\bar c}: M_{\bar c}\rightarrow M$ and $g_{\bar b}\circ f_{\bar b}
=g_{\bar c}\circ f_{\bar c}$.
So in particular $g_{\bar b}| (A\cup \bar
a)=g_{\bar c}| (A\cup \bar a)=id$ and $P^M\subseteq A$. Now
$a^\frown g_{\bar b}(\bar b)^\frown g_{\bar c}(\bar c)$ is as required.
\end{proof}


\red{
\begin{de}
\begin{enumerate}
\item
We say that a model $N$ is $\lam$-full over a set $A$ if: 

$N$ is saturated, $P^N \subseteq A$, and for every $B\subseteq N$, $|B|<\lam$, every $p\in S_*(A\cup B)$ is realized in $N$. 

If $\lam = |A|$, we omit it. 
\item
We say that a model $N$ is $\lambda$-homogenous for sequences  if  whenever
$\langle a_i:i<\alpha \rangle ,\langle b_i:i<\alpha\rangle , \alpha
<\lambda$, realize the same
type in $N$, then for every $a_\alpha\in N$ there is $b_\alpha \in N$
such that $\langle a_i:i\leq \alpha\rangle , \langle \red{b_i}:i\leq \alpha\rangle $ realize the same
type in $N$.
\end{enumerate}
\end{de}
}

\red{
\begin{remark}
 If $N$ is $\lam$-full 
over $A$, then $(N,a)_{a\in A}$ is $\lambda$-homogenous for sequences.
\end{remark}
\begin{proof}
	Note that $P^N \subseteq A$, so for every sequence $\langle {b_i}:i\leq \alpha\rangle $, the type $tp(b_\al/A\cup\{ {b_i}:i<\alpha\})$ is in $S_*(A \cup \{ {b_i}:i<\alpha\})$.
\end{proof}
}

\red{

\begin{lem}\label{bigexists}
	Let $A$ is stable, $P^A$ $\lambda$-saturated with $\lambda
=|A|=\lambda^{<\lambda}>|T|$, then there is $M$ such that:
\begin{itemize}
\item[(i)]
$P^M\subseteq A$; and moreover
\item[(ii)] Every $p\in S_*(A)$ is realized.
\item[(iii)] $M$ is $\lambda$-saturated of cardinality $\lambda$.
\end{itemize}

\end{lem}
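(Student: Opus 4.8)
\textbf{Proof plan for Lemma \ref{bigexists}.}

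The plan is to build $M$ as an increasing continuous union of $\lambda$-saturated models $\langle M_i : i<\lambda\rangle$, each of cardinality $\lambda$, with $P^{M_i}\subseteq A$, using a bookkeeping argument to guarantee the two demands: (ii) every $p\in S_*(A)$ is realized, and (iii) $M=\bigcup_i M_i$ is $\lambda$-saturated. The existence of the starting model $M_0$ and the fact that each successor step can be carried out inside the monster with $P$-part staying inside $A$ is exactly Proposition \ref{approx} (equivalently \ref{8}) together with Lemma \ref{extension}: since $A$ is complete and $P\cap A$ is $|A|$-compact, and $A$ is stable, there is a $\lambda$-saturated $M_0$ with $P^{M_0}\subseteq A\subseteq M_0$ of cardinality $\lambda$ (here one also uses Theorem \ref{17}$(ii)_\lambda$ to get a $\lambda$-primary, hence $\lambda$-saturated, model over $A$). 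For the bookkeeping, fix in advance an enumeration $\langle p_i:i<\lambda\rangle$ of $S_*(A)$ — this is possible since $|S_*(A)|\le\lambda^{|T|}=\lambda$ by stability — and, using regularity of $\lambda$ and a pairing function, arrange that at stage $i$ we also handle one ``small'' type over some $M_j$, $j<i$, that has been put on the list by an earlier stage; standard folklore gives that the union of $\lambda$-saturated models along a chain of cofinality $\lambda$, if one diagonalizes correctly, is $\lambda$-saturated.

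The key recursive step is the following: given $M_i$ with $P^{M_i}\subseteq A$, and a type we wish to realize, extend to $M_{i+1}$. There are two kinds of types to realize. First, a type $p_i\in S_*(A)$: by Lemma \ref{extension} (or directly, since $p_i$ is already a complete $*$-type over $A$) there is $\bar c\models p_i$ with $P\cap(A\cup\bar c)=P\cap A$ and $A\cup\bar c$ complete; then $A\cup\bar c$ is still complete with $P$-part inside $A$, and using the Stable Amalgamation (Theorem \ref{22}) — or, more elementarily, the combination of \ref{approx} applied to the complete set $M_i\cup\bar c$ after first making $tp(\bar c/M_i)$ the stationarization of $p_i$ over $M_i$ (legitimate since $M_i\subseteq_t\mathcal C$ and $p_i\in S_*(A)$, $A\subseteq M_i$... wait — here one needs $A\subseteq M_i$; arrange this by taking $M_0\supseteq A$) — one obtains a $\lambda$-saturated $M_{i+1}\supseteq M_i\cup\bar c$ with $P^{M_{i+1}}\subseteq A$. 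Second, a ``small'' type $q$ over a subset of $M_j$ of size $<\lambda$ that we are trying to realize for the sake of $\lambda$-saturation: since $q$ is an ordinary $L(T)$-type, we may (replacing it) pass to a $*$-extension over $A\cup(\text{its parameters})$ — the domain is complete because it lies between $P^{\mathcal C}\cap A$ and a model — realize it by some $\bar c$ with $P$-part in $A$ via Lemma \ref{extension}, make its type over $M_j$ a stationarization, and absorb it into the next $\lambda$-saturated model as above.

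The main obstacle I expect is the bookkeeping needed for (iii): one must enumerate, along the construction, all pairs $(i, q)$ where $q$ is a type over a $<\lambda$-sized subset of $M_i$, and there are $\lambda$ such subsets at each of $\lambda$ stages each carrying $\le\lambda^{<\lambda}=\lambda$ types; regularity of $\lambda$ lets a single diagonal enumeration of length $\lambda$ catch all of them cofinally, but one has to be careful that a type appearing at stage $i$ is still a type over (a subset of) $M_{i'}$ for all later $i'$, which is immediate since the chain is increasing and continuous. For the final sentence of the lemma (only $\lambda$ regular with $\lambda=\lambda^{|T|}$ needed for (i)+(ii)), one drops demand (iii): no saturation bookkeeping is required, one simply enumerates $S_*(A)$ in length $\lambda$ (possible since $|S_*(A)|\le\lambda^{|T|}=\lambda$) and realizes the types one at a time as above, taking unions at limits via Fact \ref{7}(ii) to keep completeness and $P^{M}\subseteq A$; each step only needs \ref{extension}, not the full $\lambda^{<\lambda}=\lambda$ hypothesis.
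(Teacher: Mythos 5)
Your plan has the right shape --- realize $S_*(A)$ one type at a time along an increasing $\lambda$-chain --- but the successor step you propose does not go through, and you make the bookkeeping for (iii) harder than it needs to be.

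The gap: to realize $p_i\in S_*(A)$ while keeping the $P$-part inside $A$, you want $\bar c\models p_i$ with $tp(\bar c/M_i)\in S_*(M_i)$, and you propose to get this by ``making $tp(\bar c/M_i)$ the stationarization of $p_i$ over $M_i$.'' But the stationarization of $p\in S_*(A)$ over $B$ is only defined (and Lemma \ref{20} only applies) when $A\subseteq_t B$, and you never establish $A\subseteq_t M_i$. Your parenthetical ``legitimate since $M_i\subseteq_t\mathcal C$ and $A\subseteq M_i$'' does not give this: from $A\subseteq B\subseteq C$ and $B\subseteq_t C$ one cannot conclude $A\subseteq_t B$. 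In fact $A\subseteq_t M_i$ typically fails when the complete set $A$ is not itself a model --- for instance if $A\subseteq P^{\mathcal C}$, any element of $M_i\setminus P$ realizes a formula over $A$ with no witness in $A$. Your fallback of invoking the Stable Amalgamation (Theorem \ref{22}) has the analogous problem: there the amalgamation base $M_0$ is a \emph{model}, whereas your $A$ is only a complete set. The tool the paper actually uses is Theorem \ref{amalgamation}, the amalgamation in $S_*(A)$, which is formulated for exactly this situation ($A$ stable, complete, $A\cap P$ $|A|$-compact): writing the $i$-th approximation as $A\cup B_i$ with $tp(B_i/A)\in S_*(A)$, $|B_i|<\lambda$, amalgamate $tp(B_i/A)$ with $p_i=tp(a_i/A)$ over $A$, obtaining (after an automorphism) $a_i'$ with $tp(B_ia_i'/A)\in S_*(A)$, and set $A_{i+1}:=A\cup B_i\cup a_i'$.

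The organizational difference is also worth noting: the paper builds an increasing continuous chain of \emph{complete sets} $A_i$ with $A_0=A$ and $|A_{i+1}\setminus A_i|<\lambda$ (so $|A_\lambda|=\lambda$), uses Fact \ref{7}(ii) at limits, and only at the very end passes from the complete set $A_\lambda$ to a model via Proposition \ref{8}, which delivers $P^M\subseteq A_\lambda\cap P=P\cap A$ and, when $\lambda=\lambda^{<\lambda}$, a $\lambda$-saturated $M$ in one stroke. This makes the diagonalization you spend a paragraph on vanish. Even on your own scheme it is not needed: for $\lambda$ regular, the union of an increasing continuous $\lambda$-chain of $\lambda$-saturated models of cardinality $\lambda$ is automatically $\lambda$-saturated, since any parameter set of size $<\lambda$ already lies in some $M_i$.
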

\begin{proof}
	Let $\lseq{p}{i}{\lam}$ list $S_*(A)$ (note: $A$ is stable, $\lam = \lam^{|T|}$). 
	
	By induction on $i\le \lambda$ choose $A_i$ increasing
continuously with $A_0=A$ and $|A_{i+1}\setminus A_i|<\lambda$, such that $A_i$ is complete, and $A_{i+1}$ realizes $p_i$. Use Amalgamation over $A$ (Theorem \ref{amalgamation}) to amalgamate $A_i$ and $a_i \models p_i$ at successor stages (recall that $\lam = \lam^{<\lam}$) and Fact \ref{7}(ii) for limit stages. 


Since $A_\lam$ is complete, $P\cap A_\lam = P\cap A$ saturated, by Proposition \ref{8} there is $M$ as required (since $\lam = \lam^{<\lam}$, $M$ is also saturated). 
	
\end{proof}

}

\red{
\begin{co}\label{fullexists}
	If $A$ is stable and $\lambda$-saturated with $\lambda
=|A|=\lambda^{<\lambda}>|T|$, there is $M$ of cardinality $\lambda$ which is full over $A$.
\end{co}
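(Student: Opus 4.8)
The plan is to build $M$ by a transfinite construction of length $\lambda$, at each stage realizing more and more $S_*$-types over larger and larger ``small'' sets, while maintaining completeness of the approximating set and keeping its $P$-part equal to $P \cap A$. Concretely, fix a bookkeeping function that, on a set of $\lambda$ stages, hands us pairs $(B, p)$ with $B \subseteq A \cup (\text{the set built so far})$ of size $<\lambda$ and $p \in S_*(A \cup B)$; since $A$ is stable and $\lambda^{<\lambda} = \lambda \geq |A|$, there are only $\lambda$ such pairs at any point, so the bookkeeping can be arranged to catch every requirement cofinally. I would construct an increasing continuous chain $\langle A_i : i < \lambda \rangle$ of complete sets with $A_0 = A$, $|A_{i+1} \setminus A_i| < \lambda$, $P \cap A_i = P \cap A$, and such that each required $p \in S_*(A \cup B)$ (with $B$ the relevant small subset of $A_i$) gets realized inside $A_{i+1}$; then set $A_\lambda = \bigcup_{i<\lambda} A_i$ and invoke Proposition~\ref{8} to find a saturated model $M$ with $P^M \subseteq A_\lambda \subseteq M$, noting $P^M = P \cap A_\lambda = P \cap A \subseteq A$.

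The successor step is where the real work is, and it is precisely what Theorem~\ref{amalgamation} (amalgamation in $S_*(A)$) is for: given $A_i$ complete with $P \cap A_i = P \cap A$, a subset $B \subseteq A_i$ of size $<\lambda$, and a type $p \in S_*(A \cup B)$ to realize, I want to adjoin a realization $\bar c$ of $p$ while preserving completeness of the whole set and not enlarging the $P$-part. The point is that $A_i$ and $(A \cup B) \cup \bar c$ both sit over the stable saturated set $A$ (or at least over $A \cup B$, which is stable since $B$ is small over the stable set $A$), and amalgamation lets us glue them together so that the joint type over $A_i$ stays in $S_*$; this gives $A_{i+1} := A_i \cup \bar c'$ complete with $P \cap A_{i+1} = P \cap A_i = P \cap A$. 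One has to be slightly careful: amalgamation as stated is over a stable saturated set of size $\lambda$, so I would either run it over $A$ itself (absorbing $B$ into the parameters of the types) using that $A$ is $\lambda$-saturated, or first enlarge $B$ inside $A_i$ to be the needed parameter set; the bookkeeping just needs to range over small subsets of the current $A_i$, which is fine since each $A_i$ has size $\lambda$.

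For the saturation requirement on $M$: realizing all $p \in S_*(A \cup B)$ for all small $B$ is exactly the fullness condition, and it also forces $M$ itself to be $\lambda$-saturated — any type over a subset of $M$ of size $<\lambda$ extends, by Lemma~\ref{extension} (or \ref{9}), to a $*$-type over the completion of that subset, which the construction has arranged to realize; alternatively, since $\lambda = \lambda^{<\lambda}$, Proposition~\ref{8} directly yields $M$ saturated. The resulting $M$ has $P^M \subseteq A$, is saturated of cardinality $\lambda$, and for every $B \subseteq M$ with $|B| < |A| = \lambda$, every $p \in S_*(A \cup B)$ is realized in $M$ — which is the definition of $M$ being full over $A$.

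The main obstacle I anticipate is organizing the bookkeeping so that it catches $S_*$-types over subsets $A \cup B$ where $B$ ranges over small subsets of the \emph{final} model $M = A_\lambda$, not just of some fixed initial piece: this is the standard ``moving target'' issue in fullness/saturation constructions, resolved by the fact that any such small $B$ already lives inside some $A_i$ with $i < \lambda$ (as $\lambda$ is regular — which is why the lemma, when it drops $\lambda = \lambda^{<\lambda}$ down to $\lambda = \lambda^{|T|}$ with $\lambda$ regular, still needs regularity), so a suitable diagonal enumeration of length $\lambda$ using $\lambda^{|T|} = \lambda$ hits every pair $(B,p)$. The only other thing to verify carefully is that amalgamation genuinely gives back a \emph{complete} set with the same $P$-part at each step, but that is immediate from Lemma~\ref{station}(iii) and Fact~\ref{7}, exactly as in the proof of Lemma~\ref{bigexists}.
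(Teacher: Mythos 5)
Your approach is essentially the same as the paper's: build a chain by a bookkeeping/amalgamation argument, note that each new type can be adjoined by Theorem~\ref{amalgamation} applied over the stable saturated base, and take the union of length $\lambda$. The paper modularizes this by first proving Lemma~\ref{bigexists} (realize all of $S_*(X)$ over a given complete stable $X$ in one round of length $\lambda$), and then iterating that lemma $\lambda$ times over a chain of \emph{models} $M_0\prec M_1\prec\cdots$; you inline the amalgamation directly into a single length-$\lambda$ chain of complete sets $A_i$ with diagonal bookkeeping. Both versions ultimately rest on the same ingredients (amalgamation, Proposition~\ref{8}, Lemma~\ref{extension}, regularity of $\lambda$).

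There is one gap you should close. You build a chain of complete sets $A_i$ and then invoke Proposition~\ref{8} to get a model $M$ with $P^M\subseteq A_\lambda\subseteq M$. If $M\supsetneq A_\lambda$, your bookkeeping has caught the pairs $(B,p)$ with $B\subseteq A_\lambda$, but fullness quantifies over small $B\subseteq M$, and nothing in the construction handles $B$ meeting $M\setminus A_\lambda$. You write $M=A_\lambda$ in your last paragraph, and that is indeed what must hold, but it requires an argument: one must check that $A_\lambda$ is already an elementary submodel of $\mathcal C$. This follows because the bookkeeping (together with Lemma~\ref{extension}, using that $P\cap A_\lambda=P\cap A$ is $\lambda$-saturated) realizes enough $*$-types in $A_\lambda$ to verify Tarski--Vaught; but it is not automatic from ``$A_\lambda$ is complete'' and needs to be said explicitly. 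The paper avoids this entirely by making each stage $M_i$ a model (via Lemma~\ref{bigexists}), so the union is trivially a model. Also a small slip: the regularity-and-$\lambda=\lambda^{|T|}$ weakening you mention belongs to Lemma~\ref{bigexists}, not to the corollary you are proving, where $\lambda=\lambda^{<\lambda}$ is assumed outright.
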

\begin{proof}
	Let $M_0$ be as in the previous Lemma. Now construct $M_i$ increasing 
	 (for $i<\lam$) such that $M_i$ satisfies the requirements (i) 
	-- (iii) of  the Lemma with $A$ there 
	replaced with  $\bigcup_{j<i}M_j$ (note that all models are stable, and $P^{M_i} = P^{M_0} = P\cap A$). 
	
	Clearly $M_\lam$ is as required (note that $\lam$ is regular). 
\end{proof}
}


\begin{co} If $A$ is stable and $\lambda$-saturated with $\lambda
=|A|=\lambda^{<\lambda}>|T|$, there is $M$ such that:
\begin{itemize}
\item[(i)] $M$ is $\lambda$-saturated of cardinality $\lambda$ with
$P^M\subseteq A$; and moreover
\item[(ii)] $(M,a)_{a\in A}$ is $\lambda$-homogenous for sequences and
every $p\in S_*(A)$ is realized.
\end{itemize}
\end{co}

%
%

\smallskip

\red{

Our next goal is a Symmetry Lemma for stationarizations over a model. 

We begin by showing that every ``Morley sequence'' (that is, a sequence of stationarizations of a given type) has a certain weak convergence property (which may remind the reader of the behaviour of indiscernible sequences in dependent theories). After having proved symmetry, we will conclude true convergence (since we will know that every such sequence is in fact an indiscernible set). However, we need the weak convergence property for the proof of symmetry, hence we deal with it first.

\begin{lem}\label{weakconvergence}\underline{(Weak Convergence over stable sets).} 
%
	Let $\seq{A_i\colon i\le \mu}$ be a sequence of stable sets 
	increasing 
	continuously, $A\subseteq_t A_i$, 
	$\bar a_i \subseteq A_{i+1}$ and $tp(\bar a_i/A_i)$ is
	the stationarization of $tp(\bar a_0/A)$. 	
	Let $\bar c\in A_\mu$ and $\psi (\bar x,\bar z, \bar w)$ a formula, $\theta (\bar z,\bar x,\bar w):=\psi(\bar x,\bar z, \bar w)$.
	Let $\Delta _2$ be finite such that  $n_\theta:=R_{A_{\mu}}(\bar x=\bar x,\{\theta\},\Delta_2,2)<\omega$ ($A_\mu$ is stable, so such $\Delta_2$, $n_\theta$ exist).
	
	Then there are $n\leq n_\theta$, $0=i_0<i_1<\dots <i_n=\lambda$,
	and $p_0(\x),\dots ,p_{n-1}(\x)$ such that for all $m<n, i_m<i<i_{m+1}$ implies $tp_\psi (\bar a_i/\bar c\cup A)=p_m$.
	

\end{lem}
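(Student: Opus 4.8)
The plan is to exploit the rank $R_{A_\mu}(\,\cdot\,,\{\theta\},\Delta_2,2)$ as a monotone "counter" along the chain. The key point is that the types $tp_\psi(\bar a_i/\bar c\cup A)$ are controlled by a uniform definition: since $tp(\bar a_i/A_i)$ is the stationarization of $tp(\bar a_0/A)$, the $\theta$-type of $\bar c$ over $A_i$ (and in particular the fact whether $\psi(\bar a_i,\bar c,\bar w)$-type... more precisely, whether $\psi(\bar x,\bar c,\bar w)\in\ldots$) is given by a definition $\Psi_\theta(\bar x,\bar c,\bar e)$ with $\bar e\subseteq A$ — wait, let me reconsider: we want to track $tp_\psi(\bar a_i/\bar c A)$. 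First I would set up the following: consider, for each $i\le\mu$, the partial type $q_i(\bar x)$ over $A_i$ expressing "$\bar x$ realizes the stationarization of $tp(\bar a_0/A)$ restricted to the relevant finite fragment" together with a finite $q\subseteq q_i$ witnessing the rank via Fact~\ref{rank}(ii). The idea is that $\bar a_i$ realizes such $q_i$, so $R_{A_\mu}(q_i,\{\theta\},\Delta_2,2)\le n_\theta$, and this rank is weakly decreasing (or rather the relevant quantity is) as $i$ increases because $A_i$ grows and rank is decreasing in $\Delta_2$... no, decreasing in $p$. As we pass from $i$ to $i'>i$, the type $tp(\bar a_i/\bar c A)$ and $tp(\bar a_{i'}/\bar c A)$ either agree or the rank drops.

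The heart of the argument: suppose $i<i'$ and $tp_\psi(\bar a_i/\bar c A)\ne tp_\psi(\bar a_{i'}/\bar c A)$. I want to conclude that a certain rank strictly drops. The mechanism is clause~(iv) of Definition~\ref{R} (the "odd" step, involving $\Psi_\theta$ and parameters from $P\cap A$): because $\bar a_j$ for $j$ in some interval all realize the stationarization over $A_j$, and stationarization forces the $\theta$-definition over $\bar c$ to be $\Psi_\theta(\bar z,\bar b)$ with $\bar b\subseteq P\cap A$ (here using that $A\subseteq_t A_j$ and completeness), the formula $(\forall\bar z\subseteq P)[\theta(\bar z,\bar a_i,\bar w)\equiv\Psi_\theta(\bar z,\bar b_i)]$ lies in $tp(\bar a_i/A_\mu)$. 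Hmm — I think the cleaner route is: define for each $i$ the value $k_i:=R_{A_\mu}(q\cup\{\theta_i\},\{\theta\},\Delta_2,2)$ where $\theta_i$ encodes the $\theta$-definition of $\bar a_i$ over $\bar c A$; show $k_i$ is weakly decreasing in $i$, bounded by $n_\theta$, and that whenever the $\psi$-type of $\bar a_i$ over $\bar c A$ changes, $k_i$ strictly decreases. Then the number of "change points" is at most $n_\theta$, which gives the partition into $\le n_\theta$ intervals with constant $tp_\psi(\bar a_i/\bar c A)=p_m$ on $(i_m,i_{m+1})$.

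So the key steps, in order: (1) Fix the finite $\Delta_2$, $n_\theta$ from stability of $A_\mu$ applied to $\{\theta\}$; pick a finite $q(\bar x)\subseteq tp(\bar a_0/A)$ realizing the rank of the full stationarization (Fact~\ref{rank}(ii)), noting all $\bar a_i$ realize (the appropriate translate of) it. (2) For each $i$, use stationarization plus completeness of $A_i\bar c$ — note $tp(\bar a_i\bar c/A_i)\in S_*(A_i)$ so the $\theta$-type of $\bar a_i\bar c$ over $P^{\cal C}$ is defined over $P\cap A_i$, and using $A\subseteq_t A_i$ one pushes the parameters into $P\cap A$ — to write down a formula $\theta_i(\bar x)\in tp(\bar a_i/A_\mu)$ recording the $\theta$-definition of $\bar c$ against $\bar a_i$. (3) Set $k_i=R_{A_\mu}(q\cup\{\theta_i\},\{\theta\},\Delta_2,2)$; show $0\le k_i\le n_\theta$ by monotonicity. (4) Show $i<i'\Rightarrow k_i\ge k_{i'}$: because $\theta_{i'}$ essentially "extends" the constraints, using that $A_i\subseteq A_{i'}$ and that $\bar a_{i'}$'s stationarization over $A_{i'}$ restricts to that over $A_i$ — this is where I'd invoke the Remark after Lemma~\ref{20} about stationarizations restricting. (5) Show that if $tp_\psi(\bar a_i/\bar c A)\ne tp_\psi(\bar a_{i'}/\bar c A)$ then $k_i>k_{i'}$: here, since the two $\theta$-definitions over $\bar c$ disagree on some $\bar z_0\subseteq P$, and both are "$\Psi_\theta$-definitions" with parameters in $P$, clause~(iv) of Definition~\ref{R} provides the strict drop — one builds from $q\cup\{\theta_{i}\}$ a splitting witnessed at the next rank level using the disagreeing parameters $\bar b_i\ne\bar b_{i'}$ in $P\cap A$. (6) Conclude: the map $i\mapsto k_i$ is weakly decreasing with $\le n_\theta+1$ values, so there are $\le n_\theta$ points where it drops; between consecutive drops $tp_\psi(\bar a_i/\bar c A)$ is constant, giving $n\le n_\theta$ and the $i_m$, $p_m$ as claimed (and constancy on the open intervals, with the $i_m$ themselves allowed to be boundary points).

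The main obstacle I anticipate is Step (5) — precisely arranging the bookkeeping so that a genuine change in $tp_\psi(\bar a_i/\bar c A)$ translates into an application of the odd clause~(iv) of the rank. One must be careful that the witnessing parameters $\bar b_i$ live in $P\cap A$ (not merely $P\cap A_i$), which is exactly where $A\subseteq_t A_i$ and completeness of $A_i\bar a_i$ are used, and that the "$\forall\bar z\subseteq P$" constraint genuinely separates the two possibilities at rank level $k_{i'}+1$. A secondary subtlety is handling the case where $\psi$ has the extra variables $\bar w$ (so that $tp_\psi$ is a type in more than the $\bar a_i$-sorted variables); I expect this to be harmless, absorbed by treating $\theta(\bar z,\bar x,\bar w)$ as a single formula in $\Delta_2$ as the statement already does.
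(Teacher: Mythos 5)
Your plan is structurally close — a rank bounded by $n_\theta$ drops at most $n_\theta$ times along the chain, and between drops the types are constant — but you have attached the rank to the wrong side of the story, and this creates a genuine gap at your step (4).

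The paper tracks $R_{A_\mu}(tp_\theta(\bar c/A_\alpha),\{\theta\},\Delta_2,2)$ as a function of $\alpha$. Because $\langle A_\alpha\rangle$ is increasing, the types $tp_\theta(\bar c/A_\alpha)$ form an increasing chain, and Fact~\ref{rank}(i) (rank is decreasing in $p$) gives monotonicity for free. On each maximal interval where this rank is constant, the proof of Corollary~\ref{14} shows a single formula $\Psi_\theta(\bar x,\bar w,\bar d_l)$ with $\bar d_l\in A_{i_l}$ defines $tp_\theta(\bar c/A_\alpha)$ for every $\alpha$ in the interval; and since any two $\bar a_\alpha,\bar a_\beta$ with $i_l\le\alpha<\beta<i_{l+1}$ realize the same stationarization over $A_{i_l}$ (restriction of stationarization), the definition $\Psi_\theta(\cdot,\bar m,\bar d_l)$ evaluates identically on them for $\bar m\subseteq A$, which is exactly constancy of $tp_\psi(\bar a_i/\bar c A)$ on the interval.

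Your proposal instead sets $k_i=R_{A_\mu}(q\cup\{\theta_i\},\{\theta\},\Delta_2,2)$ where $\theta_i$ records the $\theta$-definition of $\bar c$ against $\bar a_i$. The types $q\cup\{\theta_i\}$ for different $i$ are \emph{not nested}, so there is no a priori reason $k_i$ should be monotone. The justification you offer ("$\theta_{i'}$ essentially extends the constraints, using that stationarizations restrict") does not work: the restriction property tells you $tp(\bar a_{i'}/A_i)=tp(\bar a_i/A_i)$, which is a statement \emph{over $A_i$}, but $\theta_i$ and $\theta_{i'}$ are formulas involving $\bar c\in A_\mu\setminus A_i$ as a parameter, so that agreement gives you nothing. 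Without monotonicity, the count of "change points" is not bounded by $n_\theta$, and the whole scheme collapses. (A smaller quibble: you invoke clause~(iv) of Definition~\ref{R} for the strict drop, but the natural split in case $tp_\psi(\bar a_i/\bar c A)\ne tp_\psi(\bar a_{i'}/\bar c A)$ is a pair of explicitly contradictory $\{\theta\}$-types over $A_\mu$, i.e.\ the even clause~(iii); this is not where your problem lies, but it is a sign the bookkeeping is off.) The fix is to flip the perspective: $\bar c$ is the fixed tuple and $A_\alpha$ the growing set, so the rank of $\bar c$'s $\theta$-type over $A_\alpha$ is the quantity that is automatically non-increasing, and that is what you should track.
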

\begin{proof}
By Fact \ref{rank} (i), $\langle R_{A_\mu}(tp(\bar c/A_\alpha ),\{\theta\},\Delta
_2,2):\alpha <\mu\rangle $ is a non-increasing sequence of natural
numbers $\leq n_\theta$. 


So there are $n\leq n_\theta, 0=i_0<\dots
<i_n=\mu$ such that

$$i_l\leq \alpha\leq \beta<i_{l+1} \Rightarrow R_{A_\mu}(tp_\theta(\bar
c/M_\alpha),\{\theta\},\Delta _2,2)=R_{A_\mu}(tp_\theta(\bar
c/M_\beta),\{\theta\},\Delta _2,2).$$

Again by Fact \ref{rank} {\color{black}(more specifically, by the proof of Corollary \ref{14} -- the nature of the defining scheme)}, 
for each $l$ there is $\bar d_l\in A_{i_l}$
and $\Psi _\theta(\bar x,\bar w,\bar d_l)$ which defines
$tp_\theta(\bar c/A_{i_l})$ so that $\Psi _\theta(\bar x,\bar
w,\bar d_l)$ actually defines $tp_\theta (\bar c/A_\alpha)$ for $i_l\leq \alpha
<i_{l+1}$. But if $i_l\leq \alpha <\beta <i_{l+1}$, then $tp(\bar
a_\alpha/A_{i_l})=tp(\bar a_\beta/A_{i_l})$,
hence for every $\bar m \subseteq A$, $\models \Psi _\theta(\bar a_\alpha,\bar m,\bar d_l)\equiv\Psi
_\theta(\bar a_\beta,\bar m,\bar d_l)$, hence $\models\theta(\bar
a_\alpha,\bar m,\bar c)\equiv\theta(\bar a_\beta,\bar m,\bar c)$, as required.

\end{proof}

}

\begin{theorem}\label{25}\label{symmetry}\underline{(The symmetry theorem for models)}. If $tp(\bar a\bar
b/M)\in S_*(M)$ where $tp(\bar b/M\cup\bar a)$ is the stationarization
of $tp(\bar b/M)$, then $tp(\bar a/M\cup\bar b)$ is the
stationarization of $tp(\bar a/M)$.
\end{theorem}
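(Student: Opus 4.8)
The strategy is the standard ``exchange/Morley sequence'' argument adapted to stationarizations over a model, using the weak convergence property (Lemma~\ref{weakconvergence}) as the main tool. Assume for contradiction that $tp(\bar a\bar b/M)\in S_*(M)$, that $tp(\bar b/M\cup\bar a)$ is the stationarization of $tp(\bar b/M)$, but that $tp(\bar a/M\cup\bar b)$ is \emph{not} the stationarization of $tp(\bar a/M)$. By Lemma~\ref{station}(ii) the stationarization is unique, so non-stationarity is witnessed by a single formula: there is $\psi(\bar x,\bar y)\in L(T)$ and $\bar b\subseteq M\cup\bar b$ (in fact we may take the relevant parameters to be $\bar b$ over $M$) such that the $\psi$-type of $\bar a$ over $M\cup\bar b$ is \emph{not} defined by the scheme $\Psi_\psi(\bar y,\bar m_\psi)$ ($\bar m_\psi\subseteq M$) that defines $tp_\psi(\bar a/M)$; i.e.\ $\models\psi(\bar a,\bar b)$ yet $\models\neg\Psi_\psi(\bar b,\bar m_\psi)$, or conversely.

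\textbf{Building the Morley sequence.}
First I would move to a large saturated monster-like model and build, by transfinite recursion of length a suitably large regular $\mu$, a continuous increasing chain of complete (indeed stable, by the remark following Lemma~\ref{16}, since we only add finite tuples) sets $M=A_0\subseteq A_1\subseteq\cdots$, together with tuples $\bar a_i\bar b_i$ for $i<\mu$, such that $\bar a_0\bar b_0=\bar a\bar b$, $A_{i+1}\supseteq A_i\cup\bar a_i\bar b_i$, $M\subseteq_t A_i$ for all $i$, and $tp(\bar a_i\bar b_i/A_i)$ is the stationarization over $A_i$ of $tp(\bar a\bar b/M)\in S_*(M)$ (this is possible by Lemma~\ref{station}, using that $M\subseteq_t A_i$ and that stationarizations of a common type over an increasing chain are coherent, as in the corollary of proof after Lemma~\ref{station} and the subsequent remark (ii)). Because each $tp(\bar a_i\bar b_i/A_i)$ restricts to a stationarization of $tp(\bar a/M)$ and of $tp(\bar b/M)$ over $A_i$, the $\psi$-type of $\bar a_i$ over $A_i$ is uniformly defined by $\Psi_\psi(\bar y,\bar m_\psi)$, and likewise for the $\bar b_i$. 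Crucially, by the hypothesis that $tp(\bar b/M\cup\bar a)$ is the stationarization of $tp(\bar b/M)$, together with the coherence of stationarizations (remark (iii) after Lemma~\ref{station}: $tp(\bar b_i/A_i\cup\bar a_i)$ is the stationarization of $tp(\bar b_i/A_i)$ whenever $A_i\bar a_i$ is stable, which holds for finite $\bar a_i$), we get $\models\psi(\bar a_j,\bar b_i)\iff\Psi_\psi(\bar b_i,\bar m_\psi)$ for all $j\le i$ — so along the ``lower triangle'' the truth value of $\psi(\bar a_j,\bar b_i)$ is constant in $j$ and $i$ and equals the value of $\Psi_\psi(\bar b_i,\bar m_\psi)$, which is the ``stationarization value''. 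Meanwhile the failure of symmetry gives $\models\psi(\bar a_i,\bar b_i)$ (say) different from $\Psi_\psi(\bar b_i,\bar m_\psi)$ on the diagonal — more usefully, I want to extract from the failure that for each $i$ there is $j>i$ with $\models\psi(\bar a_i,\bar b_j)$ having the ``non-stationarization'' value, i.e.\ the truth value of $\psi(\bar a_i,\bar b_j)$ is \emph{different} from the lower-triangle value for infinitely many $j$; this is where one uses that $tp(\bar b_j/A_j)$ (hence over $A_i\cup\bar a_i$) is the stationarization of $tp(\bar b/M)$, so $\psi(\bar a_i,\bar b_j)$ must agree with $\Psi_{\psi'}$-value determined by $tp(\bar a_i/?)$ — and the clash of the two definitions forces a switch in the truth value infinitely often as $j$ grows. (This is exactly the asymmetry: from the ``column'' perspective the value is fixed by $tp(\bar b/M)$'s scheme, from the ``row'' perspective it is \emph{not} fixed by $tp(\bar a/M)$'s scheme, so it oscillates.)

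\textbf{Deriving the contradiction via weak convergence.}
Now apply Lemma~\ref{weakconvergence} with $\bar c:=\bar a_i$ (for a single well-chosen $i$) and $A:=M$, $A_k$ the chain above, and the formula $\psi(\bar x,\bar z)$ (so $\theta(\bar z,\bar x)=\psi(\bar x,\bar z)$, thinking of the $\bar b$'s as the varying sequence): the lemma says $tp_\psi(\bar b_k/\bar a_i\cup M)$ takes at most $n_\theta<\omega$ distinct values as $k$ ranges over $\mu$, in finitely many contiguous blocks. But the oscillation established in the previous paragraph — for the fixed $\bar a_i$, the truth value of $\psi(\bar a_i,\bar b_k)$ changes cofinally often in $k$ (between the stationarization value for $k\le i$ and the non-stationarization value for suitable large $k$, and back, infinitely often if we chose $\mu$ and the recursion carefully to reinstate the ``lower triangle'' behaviour at unboundedly many stages) — yields more than $n_\theta$ distinct $\psi$-types over $\bar a_i\cup M$ in this sequence, a contradiction. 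The cleanest packaging: choose $\mu>n_\theta$ a regular cardinal; by weak convergence there is a final block on which $tp_\psi(\bar b_k/\bar a_i M)$ is constant; but for $k$ in that final block, $k$ is above $i$, so the value should be the ``$tp(\bar b/M)$-stationarization value for $\bar a_i$'', which by the failure of symmetry is \emph{not} the lower-triangle value — whereas taking $i$ itself large enough inside that same block (re-indexing, since the construction is symmetric in the choice of basepoint) puts $\bar a_i$ in a position where $\psi(\bar a_i,\bar b_i)$ should equal the lower-triangle value, contradiction.

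\textbf{Main obstacle.}
The delicate point — and the step I expect to be the real work — is extracting the genuine \emph{oscillation} (more than $n_\theta$ alternations) of the truth value of $\psi(\bar a_i,\bar b_j)$ from the mere \emph{non}-stationarity of $tp(\bar a/M\cup\bar b)$. Non-stationarity a priori only gives one ``bad'' instance, not cofinally many; one must leverage the homogeneity of the Morley construction (every $\bar a_i$ looks like $\bar a_0=\bar a$ over $A_i\supseteq M$, and every $\bar b_j$ looks like $\bar b$ over $A_j$) to promote that single instance to an unbounded set of alternations, so that weak convergence (which caps the number of $\psi$-types, hence alternations, at $n_\theta$) is genuinely violated. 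Concretely, one fixes the bad pair, realizes the same configuration over each $A_i$ by stationarizing, and observes that the two competing definitions $\Psi_\psi(\bar y,\bar m_\psi)$ (from $tp(\bar a/M)$) and $\Psi_{\psi^{\mathrm{op}}}(\bar x,\bar m')$ (from $tp(\bar b/M)$, using the reversed formula $\theta$) cannot \emph{both} govern the array; their disagreement, spread out along a sufficiently long chain, forces the alternation count to exceed any fixed finite bound, contradicting $n_\theta<\omega$. Everything else — coherence of stationarizations, stability of $A_i\bar a_i$ for finite tuples, applicability of Lemma~\ref{weakconvergence} — is routine given the results already established.
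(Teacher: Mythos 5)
Your key idea --- that the failure of symmetry forces more than $n_\theta$ alternations of the truth value of $\psi(\bar a_i,\bar b_j)$ along the Morley sequence, contradicting weak convergence --- does not work, and you correctly flag this as the uncertain step. The pattern that failure of symmetry actually produces is \emph{monotone}, not oscillating: from coherence of stationarizations one gets $tp(\bar b_j\bar a_i/M)=tp(\bar b\bar a/M)$ whenever $i<j$, while the counterexample gives a formula $\theta(\bar x,\bar y,\bar e)$ (with $\bar e\in M$) such that $\models\theta(\bar a,\bar b,\bar e)$ but $\models\neg\theta(\bar a_1,\bar b_0,\bar e)$. Spreading this over the Morley sequence yields exactly
\[
\models\theta(\bar a_i,\bar b_j,\bar e)\quad\Longleftrightarrow\quad i\le j,
\]
the half-graph / order-property configuration. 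Now for any fixed $\bar a_i$, as $j$ runs over the sequence the $\theta$-type of $\bar b_j$ over $\bar a_iM$ switches \emph{exactly once} (at $j=i$), which is perfectly compatible with weak convergence; there is no oscillation to exceed $n_\theta$. Your suggestion to ``reinstate the lower-triangle behaviour at unboundedly many stages'' by choosing the recursion carefully is not available: once you insist that each $\bar a_j\bar b_j$ realize the stationarization of $tp(\bar a\bar b/M)$ over everything built so far, the monotone pattern is forced by uniqueness of stationarizations, so you cannot manufacture alternations.

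The paper's proof uses the monotone order-property pattern quite differently, and weak convergence plays a different role than the one you assign it. After building the Morley sequence $\langle\bar a_i\bar b_i:i<\lambda\rangle$ and observing that it is indiscernible over $M$, one passes to a $\lambda$-saturated elementary extension $(M^*,R^*,<^*)$ of the structure $(M_\lambda,R,<)$ recording the sequence with its order, obtaining a sequence $\langle\bar a_t\bar b_t:t\in I\rangle$ indexed by a $\lambda$-saturated linear order $I$, which has $2^\lambda$ Dedekind cuts. For each cut $(I_\alpha,J_\alpha)$, one defines the average type $p_\alpha$ over $M^*$. Weak convergence (in the form of the displayed Fact in the proof: for each $\bar c\in M^*$ and $\psi$, the $\psi$-type of $\bar a_t\bar b_t$ over $P^{M^*}\bar c$ stabilizes into at most $n_\theta$ blocks) is then used precisely to show that each $p_\alpha$ is a \emph{complete} type over $M^*$ and, combined with Remark~\ref{8.5}, that in fact $p_\alpha\in S_*(M^*)$. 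Finally the monotone formula $\theta$ makes the $p_\alpha$ pairwise distinct: $\theta(\bar x,\bar b_t,\bar e)\in p_\alpha$ iff $t\in J_\alpha$, so distinct cuts give distinct types. This yields $2^\lambda$ types in $S_*(M^*)$, contradicting the stability of the model $M^*$ (Hypothesis~\ref{asm:2}). So the contradiction is via \emph{counting} types in $S_*(M^*)$, not via bounding alternations; you should replace the oscillation argument with the Dedekind-cut/average-type argument to complete the proof.
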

\begin{proof}
Assume we have a counterexample. Let $\lambda =\lambda^{<\lambda}\geq
|M|+|T|^+$. Without loss of generality $M$ is saturated of cardinality
$\lambda$. We define $\bar a_i,\bar b_i,M_i$ by induction on
$i<\lambda$ such that $P^{M_i}=P^{M_0}, M_{i+1}$ is
$\lambda$-saturated of power $\lambda$, $M_i$ increasing continuously,
$\bar a_i\bar b_i\subseteq M_{i+1}$ and $tp(\bar a_i\bar b_i/M_i)$ is
the stationarization of $tp(\bar a\bar b/M)$. This is straightforward.
Let $M_\lambda=\bigcup_{i<\lambda} M_i$.

\red{Since all models are stable, we are clearly in the situation of Lemma \ref{weakconvergence}. Note that $P^{M_\lam} = 
P^M \subseteq M$, hence by conclusion of the Lemma we get: }

For every $\bar c\in M_\lambda$ and $\psi (\bar x,\bar y,\bar z)$ there
are $n\leq n_\theta$ and $0=i_0<i_1<\dots <i_n=\lambda$ and $p_0,\dots
,p_{n-1}$ such that for all $m<n, i_m<i<i_{m+1}$ implies $tp_\psi (\bar
a_i\bar b_i/\bar c\cup P^M)=p_m$.

From the uniqueness \red{and definability} of stationarizations it follows that $\langle \bar a_i\bar
b_i:i<\lambda \rangle $ is indiscernible over $M_0$. That is, if $i_0<i_1<\dots
<i_n<\lambda$ and $j_0<\dots <j_n<\lambda$ then $tp(\bar a_{i_0}\bar
b_{i_0}\dots \bar a_{i_n}\bar b_{i_n}/M_0)=tp(\bar a_{j_0}\bar
b_{j_0}\dots \bar a_{j_n}\bar b_{j_n}/M_0)$.

Now let $R=\{\bar a_i\bar b_i:i<\lambda\}$ and let $<$ be the order on $R$
defined so that $\bar a'\bar b'<\bar a"\bar b"$ iff there are $i<j$ with
$\bar a'\bar b'=\bar a_i\bar b_i$ and $\bar a"\bar b"=\bar a_j\bar b_j$.
The model $(M_\lambda ,R,<)$ has a saturated extension, $(M^*,R^*,<^*)$
of power $\lambda$. So for some linear order $I$, $R^*=\{\bar a_t\bar
b_t:t\in I\}$ where $\bar a_s\bar b_s<\bar a_t\bar b_t$ whenever
$I\models s<t$.

Note that $n$ depends on $\psi$ and $\bar c$, but the bound $n_\theta$
depends on $\psi$ only. Therefore the following is true:

\begin{fact}\label{factsymmetry}
For every $\bar c\in M^*$ and $\psi(\bar x,\bar y,\bar z)$ there are
$n<\red{n_\theta <}\omega$ and $t_0<\dots <t_n$ where $t_0\in I$ is the first element,
and $p_0,\dots p_n$ so that $t_l<t<t_{l+1}$ or $t_l<t, l=n$ implies
$tp_\red{\psi}(\bar a_t\bar b_t/P^{M^*}\cup\bar c)=p_l$.
\end{fact}

Now we shall finish proving the Symmetry Lemma: Since $I$ is a
$\lambda$-saturated linear order of power $\lambda$, it has $2^\lambda$
Dedekind cuts $\{ (I_\alpha ,J_\alpha ):\alpha <2^\lambda\}$.
Let $p_\alpha =\{\psi(\bar x,\bar y,\bar d);\bar d\in M^*$ and for some
$s_\alpha\in I_\alpha,t_\alpha\in J_\alpha$, if $v\in I$ and $s_\alpha
<v<t_\alpha$ then $\models\psi(\bar a_v,\bar b_v,\bar d)\}$.

By Fact \ref{factsymmetry}, $p_\alpha$ is a complete type over $M^*$. In
fact, $p_\alpha\in S_*(M^*)$: \red{indeed,if $[\exists \z \in P \psi(\z,\bar m,\x,\y)] \in p_\al$ (where $\bar m \subseteq M^*$), then (by definition of $p_\al$) there exist $s_\al,t_\al$ such that for all $v\in (s_\al,t_\al)$, we have $\models \exists \z \in P \psi(\z,\bar m,\a_v,\b_v)$; hence $M^*$ satisfies this sentence as well, so there exists such $\d \subseteq P^{M^*}$. A priori perhaps $d$ depends on $v$; however, recall (Fact \ref{factsymmetry}) that for some $s'_\al,t'_\al$ all $a_v,b_v$ (for $v \in (s'_\al,t'_\al)$) have the same $\psi$-type over $\bar m \cup P^M*$, so choosing $d$ for any such $v$, we get that for all $v \in (s'_\al,t'_\al)$ the formula $\psi(\d,\bar m,\a_v,\b_v)$ holds, hence by the definition $\psi(\d,\bar m,\x, \y) \in p_\alpha$. Now by Observation \ref{8.5}, $p_\alpha \in S_*(M^*)$, as required. }

If $i<j, tp(\bar b_j/M\cup \bar
a_i)\subseteq tp(\bar b_j/M_j)$ is a stationarization of $tp(\bar b/M)$.
By the uniqueness of stationarizations and the assumption that $tp(\bar
b/M\cup\bar a)$ is the stationarization of $tp(\bar b/M)$ it follows
that $tp(\bar b_j\bar a_i/M)=tp(\bar b\bar a/M)$.

Similarly $tp(\bar a_1/M\cup \bar b_0)$ is the stationarization of
$tp(\bar a_1/M)$ and hence $\not= tp(\bar a_0/M\cup\bar b_0)$ as we
assumed that $\bar a\bar b$ form a counter-example to symmetry. So for
some $\bar e\in M$ and $\theta$ we have $\models\theta(\bar a,\bar b,\bar
e)\wedge\neg\theta(\bar a_1,\bar b_0,\bar e)$. \red {Therefore we get: 

$$ j \ge i \then \models\theta(\a_ib_j); \;\;\;\;\;\;\;\;\;\;\;  j < i \then \models\neg\theta(\a_ib_j) $$

So $\theta(\bar x,\bar
b_t)\in p_\alpha (\bar x,\bar y)$ if and only if $t\in J_\alpha$. Hence
if $\alpha\not=\beta$, then $p_\alpha\not= p_\beta$. }So we have too many
types in $S_*(M^*)$, contradicting stability \red{of $M^*$}.
\end{proof}

\begin{de}
\begin{itemize}
\item[(i)] We say that $I=\{a_\alpha:\alpha <\alpha ^*\}$ is {\it
convergent} over a set $A$ if it is an infinite indiscernible set such that for every
$\psi$ and $\bar d \in A$ there is some $n_\psi$ such that the type
$tp_\psi(\bar a_\alpha/P^{\cal C}\cup \bar d)$ is the same for all
but $\leq n_\psi$ many $\alpha$'s.
\item[(ii)] For any such $I$ let the {\sl average} of $I$ over $A$ be
$Av(I/A)=\{\phi(\bar x,\bar b):\bar b\in A, (\exists
^\infty\alpha)\phi(\bar a_\alpha,\bar b)\}$.
\end{itemize}
\end{de}

\begin{co}\label{28}\label{average} If $\alpha\geq\omega$ and for $i\leq\alpha, tp(\bar
a_i/M\cup\bigcup_{j<i}\bar a_j)$ is a stationarization of $tp(\bar
a/M)$, then $\{\bar a_i:i<\alpha\}$ is an indiscernible set over $M$.
That is, if $i_1,\dots ,i_n<\alpha$  are distinct, then $tp(\bar
a_{i_1}\dots\bar a_{i_n}/M)=tp(\bar a_1\dots\bar a_n/M)$. In addition, if
$I=\{\bar a^\prime_\alpha :\alpha <\alpha _0\}$ is an indiscernible set
with the same type over $M$, it is convergent over any $M'\succ M$, and for
$M^\prime\succ M, Av(I/M^\prime)$ is the stationarization  of $tp(\bar
a_0/M)$ over $M'$.
\end{co}
\red{
\begin{proof}
 	 It follows from Symmetry (Theorem \ref{25}) that any such sequence is an indiscernible set over $M$. Now a standard argument shows that for indiscernible sets weak convergence (Lemma \ref{weakconvergence}) implies convergence.
\end{proof}

We can therefore conclude:

\begin{co}
	The global stationarization of a type orthogonal to $P$ over a model $p\in S_*(M)$ is a generically stable type, as defined in \cite{PT,GOU}.
\end{co}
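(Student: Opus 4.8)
The plan is to reduce the statement to the definition of generic stability and then invoke the results already established in this section. Recall that a global type $p$ over the monster is called \emph{generically stable} (over a small model $M$) if it is $M$-invariant and, for some (equivalently every) Morley sequence $\langle \bar a_i : i<\omega\rangle$ in $p$ over $M$, the type $p$ is ``the average'' of the sequence in the appropriate sense — concretely, for every formula $\psi(\bar x,\bar b)$, the set of $i$ with $\models \psi(\bar a_i,\bar b)$ is finite or cofinite, and $\psi(\bar x,\bar b)\in p \iff \{i : \models\psi(\bar a_i,\bar b)\}$ is cofinite. So the first step is to fix the global stationarization $q$ of $p\in S_*(M)$ over $\mathcal C$, which exists and is unique by Lemma \ref{20} together with the remark that $M\subseteq_t \mathcal C$, and to note that $q$ is $M$-invariant: this is immediate from the definability/uniqueness of stationarization, since automorphisms fixing $M$ fix the defining schemes $\Psi_\psi(\bar y,\bar a_\psi)$ with $\bar a_\psi\subseteq M$, hence fix $q$.

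Next I would build a Morley sequence $I=\langle \bar a_i : i<\omega\rangle$ for $q$ over $M$, i.e. $tp(\bar a_i/M\cup\bigcup_{j<i}\bar a_j)$ is the stationarization of $p=tp(\bar a/M)$ for each $i$. By Fact \ref{28} (which rests on the Symmetry Theorem \ref{25} and on weak convergence, Lemma \ref{weakconvergence}), $I$ is an indiscernible set over $M$ and is \emph{convergent} in the sense of the definition just above Fact \ref{28}: for every $\psi$ and $\bar d$ there is $n_\psi$ such that $tp_\psi(\bar a_\alpha/P^{\mathcal C}\cup\bar d)$ is constant for all but at most $n_\psi$ indices. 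Moreover, Fact \ref{28} tells us that for any $M'\succ M$, $Av(I/M')$ equals the stationarization of $p$ over $M'$, i.e. equals $q\restriction M'$. Taking the union over all small $M'$ gives $Av(I/\mathcal C)=q$.

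From here the identification with generic stability is essentially bookkeeping. Invariance of $q$ over $M$ is already noted. For the ``average'' clause: convergence of $I$ gives that for each formula $\psi(\bar x,\bar b)$ (with $\bar b$ in the monster) the truth set $\{i<\omega : \models\psi(\bar a_i,\bar b)\}$ is finite or cofinite — this is exactly the bounded-exception property, applied with $\bar d$ the part of $\bar b$ outside $P^{\mathcal C}$ — and $Av(I/\mathcal C)=q$ gives that $\psi(\bar x,\bar b)\in q$ iff that set is cofinite. Finally, indiscernibility of $I$ as a \emph{set} (not just a sequence) over $M$ is precisely the combinatorial content that characterizes generic stability among invariant types (symmetry of Morley sequences / the Morley sequence being totally indiscernible); this is delivered by Fact \ref{28}. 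Comparing these three facts with the definition of generic stability in \cite{PT,GOU} finishes the proof.

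The main obstacle — or rather the only real subtlety — is matching the precise formulation of generic stability in the cited references, since there are several equivalent definitions (invariant $+$ totally indiscernible Morley sequences; invariant and ``finitely satisfiable in a small model'' style; $dfs$; the $fim$ characterization). I would state explicitly which characterization I am verifying, namely: $q$ is $M$-invariant and some/every Morley sequence in $q$ over $M$ is an indiscernible \emph{set} with the convergence (bounded-exception) property. All three pieces are exactly Fact \ref{28} plus the invariance observation, so no new model theory over $P$ is needed; the work is entirely in phrasing. One should also double-check that the notion of ``Morley sequence'' used in \cite{PT,GOU} (built using the invariant global type itself) coincides with the inductively-built stationarization sequence used in Fact \ref{28}, which it does because the stationarization of $p$ over $M\cup\bigcup_{j<i}\bar a_j$ is the restriction of $q$ to that set, by the uniqueness and monotonicity of stationarization (Lemma \ref{20} and the remark following it).
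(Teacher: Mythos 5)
Your proposal is correct and follows exactly the approach the paper intends: the corollary is stated immediately after Fact~\ref{28}, whose proof delivers precisely the two ingredients you isolate (the Morley sequence is an indiscernible set by Symmetry~\ref{25}, and is convergent by weak convergence~\ref{weakconvergence}), and the remaining step is just the matching with the definition of generic stability in the cited references, including the $M$-invariance of $q$ via definability of the stationarization over $M$. The only thing the paper leaves implicit that you spell out carefully — and rightly so — is that the inductively-built stationarization sequence is the same as the ``Morley sequence in the global invariant type'' used in \cite{PT,GOU}, which follows from uniqueness of stationarization in Lemma~\ref{20}.
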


\begin{proof}
	First note that since $M$ is a model, $p$ has a (unique) global stationarization $p^*$, which is definable (hence invariant) over $M$. By the previous Corollary, every Morley sequence in $p^*$ 
	is convergent over $\cC$, in particular, $p^*$ is generically stable (as defined in Definition 1 of \cite{PT}). 
\end{proof}
}

One can  now use some basic machinery of generic stability to throw more light on *-types over models and the concept of stationarization, for example:

\begin{co}\label{co:nonforking}
	Let $M$ be a model and $p\in S_*(M)$. Then the unique global stationarization of $p$ is also the unique global 
	non-forking extension of $p$. 
\end{co}
\begin{proof}
By Proposition 1 in \cite{PT}.
\end{proof}

%

\bibliography{common.bib}
\bibliographystyle{alpha}

\end{document}